\documentclass[11pt]{article}

\usepackage{tasks}

\usepackage{amssymb,amsmath}
\usepackage{amsthm}
\usepackage[top=0.5in,bottom=1in,left=1.25in,right=1.25in]{geometry}
\usepackage[colorlinks=true, linkcolor=blue,filecolor=blue, urlcolor=blue, citecolor=cyan,pagebackref=true]{hyperref}

\usepackage[all]{xy}
\usepackage{graphicx}
\usepackage{tikz-cd}  
\usetikzlibrary{cd,matrix, positioning, backgrounds, shapes.arrows, arrows, patterns, calc}	
\usetikzlibrary{positioning,arrows,calc}
\usepackage{mathabx}
\usepackage{bm}
\usepackage{wasysym}
\usepackage{empheq}
\usepackage{float}
\usepackage{booktabs}
\usepackage{tabularx}
\usepackage{enumitem}

\newtheorem{theorem}{Theorem}[section]
\newtheorem{definition}[theorem]{Definition}

\newtheorem{lemma}[theorem]{Lemma}

\newtheorem{proposition}[theorem]{Proposition}
\newtheorem{cor}[theorem]{Corollary}
\newtheorem{example}[theorem]{Example}
\newtheorem{remark}[theorem]{Remark}
\newtheorem{notation}[theorem]{Notation}

\newtheorem{theoremintr}{Theorem}

\numberwithin{equation}{section}

\newcommand{\Gal}{{\rm Gal}}
\newcommand{\Z}{\mathbb Z}
\newcommand{\N}{\mathbb N}
\newcommand{\Q}{\mathbb Q}
\newcommand{\eps}{\varepsilon}
\newcommand{\ev}{\operatorname{ev}}

\newcommand{\sym}{\operatorname{Sym}}
\newcommand{\R}{\mathbb{R}}
\newcommand{\zN}{\zeta_N}

\title{On structured cosine sums and applications}
\author{Qin Xue\footnote{Email: {\tt xueqin@westlake.edu.cn}}}
\date{\today}
\begin{document}

\maketitle
\begin{abstract}
		For a multiset $S$ on the cyclic group $\Z/N\Z$, we study finite sums of cosine functions of rational angles associated to $S$ by translating them as evaluations of elements in the group ring $Z[\Z/N\Z]$. Using vanishing sums of roots of unity,
especially the Lam--Leung theory, we obtain  criteria for the vanishing of
 the cosine sums under some conditions, and prove a  small-weight Fourier rigidity.  We then
apply these algebraic results to cyclic Cayley graphs, deriving the zero-eigenvalue
criteria, multiplicity bounds for nonzero eigenvalues in small-support case, and a description of the
square-free case where the generating set is a subgroup of the unit group.
		
		\vspace{4mm}

		\noindent{\it Keywords}: Vanishing sum,  Group ring, Cayley graph, Eigenvalue,
Multiplicity, Fourier rigidity. 
	\end{abstract}
\tableofcontents

\section{Introduction}

Trigonometric Diophantine equations with rational angles can be reduced,
after elementary trigonometric identities and clearing denominators, to finite
sums of cosines of rational multiples of $2\pi$.  Such rational
trigonometric equations were studied by Conway-Jones
\cite{conway1976trigonometric}.  Motivated by this perspective, we study the
following structured  cosine sums: for an integer $N\geq 1$ and a
multiset $S=\{s_1,\ldots,s_t\}\subseteq \Z/N\Z$, define
\begin{equation}\label{eq:intro-CS}
   C_S(k)=\sum_{j=1}^t
   \cos\!\left(\frac{2\pi k s_j}{N}\right),
   \qquad k\in \Z/N\Z .
\end{equation}
We are mainly interested in the following two questions for a fixed multiset $S$:
\begin{itemize}
    \item \emph{Vanishing problem:} characterize $k$ and $S$ for which $C_S(k)=0$?
    \item \emph{Multiplicity problem:} for a fixed $c\in \R$, estimate and determine the size of $\{k\in \Z/N\Z:C_S(k)=c\}$.
\end{itemize}
 
We aim to investigate these questions by translating \eqref{eq:intro-CS} into the language of roots of
unity and integral group rings.  Let $G:=\Z/N\Z$ and write $\Z[G]$ for the
integral group ring with basis elements $[x]$, $x\in G$.  The natural evaluation map $\ev_N:\Z[G]\to \mathbb{C}$ is given by
\begin{equation*}
  \ev_N\!\left(\sum_{x\in G} c(x)[x]\right)=
  \sum_{x\in G} c(x)\zeta_N^x .
\end{equation*}
For a multiset $X$ on $G$ we always identify $X$ with its incidence element
$\sigma(X):=\sum\limits_{x\in X}[x]\in \N[G]$.  A multiset $X$ on
$G$ can also be identified with its multiplicity function $X:G\to \N$. Its weight is $|X|:=\sum\limits_{g\in G}X(g)$. We say that $X$ is symmetric if $X(g)=X(-g)$ for all $g\in G$. We also put $X^*=-X$ and
$\sym(X)=X+X^*$.  Then
\begin{equation}\label{eq:intro-bridge}
  2C_S(k)=\ev_N\bigl(\sym(kS)\bigr).
\end{equation}
Thus the
vanishing and multiplicity questions of $C_S(k)$ are transformed to questions about vanishing
sums of $N$-th roots of unity, i.e., elements in $\ker(\ev_N) \cap \mathbb{N}[G]$.

Vanishing sums of roots of unity give a natural algebraic framework for
trigonometric Diophantine equations with rational angles. Indeed, the
identities $2\cos\theta=e^{i\theta}+e^{-i\theta}$ and $2i\sin\theta=e^{i\theta}-e^{-i\theta}$ transform relations among sines and cosines of rational multiples of $\pi$
into equations in roots of unity.  Fundamental restrictions on irreducible relations among roots of unity were
obtained by Mann \cite{mann1965linear}.  This
viewpoint was applied to Diophantine problems by Newman
\cite{newman1969some} and developed systematically for rational trigonometric
equations by Conway-Jones \cite{conway1976trigonometric}.  It has since been used in several related
trigonometric Diophantine problems: Myerson \cite{myerson1993rational} studied rational
products of sines of rational angles; Laczkovich \cite{laczkovich2003configurations} applied
such equations to configurations with rational angles; Dvornicich-Veneziano-Zannier \cite{DVZ22} used equations in roots of unity in their study of
rational angles in plane lattices; and Kedlaya-Kolpakov-Poonen-Rubinstein \cite{KKPR20} solved a higher-dimensional rational-angle problem by
reducing it to a roots-of-unity equation.

In this paper, we apply this perspective to the structured cosine sums
\eqref{eq:intro-CS}.  As a first illustration, we give a root-of-unity proof of
W\l odarski's classification \cite{wlodarski1969cos} of rational four-cosine vanishing sums in Section \ref{sec:Wlo}, using the
low-weight classification of Poonen-Rubinstein
\cite{poonen1998number}.  This also leads to a sharp two-cosine fiber result.

\begin{theoremintr}[Proposition \ref{prop:three-reps}]\label{thm:two cos fib}
Let $c\in \R\setminus\{0\}$ and set
\begin{equation*}
  \mathcal{R}(c)=\Bigl\{\{\alpha,\beta\}\subset [0,\pi]\cap \Q\pi:
  \cos\alpha+\cos\beta=c\Bigr\},
\end{equation*}
where $\{\alpha,\beta\}$ is a multiset.  Then $|\mathcal{R}(c)|\leq 3$.  Moreover,
$|\mathcal{R}(c)|=3$ if and only if $c\in \left\{\pm \frac12,
  \pm\cos\frac{\pi}{5},\pm\cos\frac{2\pi}{5}\right\}$, with the three representations in each case explicitly determined in Proposition~\ref{prop:three-reps}.
\end{theoremintr}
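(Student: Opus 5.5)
The plan is to turn two distinct representations of the same $c$ into a vanishing sum of roots of unity of small weight, and then read off the possibilities from the Poonen--Rubinstein low-weight classification \cite{poonen1998number}.

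\textbf{Reduction.} Suppose $\{\alpha,\beta\}\neq\{\gamma,\delta\}$ are two elements of $\mathcal R(c)$. Then
\[
\cos\alpha+\cos\beta-\cos\gamma-\cos\delta=0 .
\]
Since $-\cos\theta=\cos(\pi-\theta)$, this is a rational four-cosine vanishing sum. Such sums are classified by W\l odarski's theorem, which Section~\ref{sec:Wlo} reproves from Poonen--Rubinstein. In root-of-unity form we have $\ev_N(\sym(X))=0$ for a multiset $X$ of weight $4$, so $\sym(X)$ is a symmetric vanishing sum of weight $8$.

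\textbf{Case split.} There are two ways this can happen.
\begin{itemize}
\item[(a)] The relation is \emph{trivial}: the eight terms pair off as $\zeta+(-\zeta)$. This corresponds to $\cos\theta+\cos(\pi-\theta)=0$, i.e.\ some term in $\{\alpha,\beta\}$ cancels a term in $\{\pi-\gamma,\pi-\delta\}$ after adjusting signs. Because $c\neq0$, pairings that cancel $\alpha$ against $\beta$ are excluded. Any remaining cancellation forces $\{\alpha,\beta\}=\{\gamma,\delta\}$, a contradiction. The hypothesis $c\neq0$ is used exactly here.
\item[(b)] The relation is \emph{nontrivial}: it involves a $\pm$-rotated minimal relation of weight $3$ or $5$, or belongs to the sporadic small-weight families of Poonen--Rubinstein, namely relations among $\zeta_3$, $\zeta_5$ and mixtures such as $(1+\zeta_3+\zeta_3^2)$-type combined with $\zeta_5$-type.
\end{itemize}
Using the explicit W\l odarski list, every nontrivial solution of $\cos\alpha+\cos\beta=\cos\gamma+\cos\delta$ with all angles in $[0,\pi]\cap\Q\pi$ and $\{\alpha,\beta\}\neq\{\gamma,\delta\}$ comes from one of two sources:
\begin{itemize}
\item the identity $\cos\theta+\cos(\pi/3-\theta)$-type relations coming from $\cos\theta=\cos(\theta+2\pi/3)+\cos(\theta-2\pi/3)$ with a sign, which gives a one-parameter family;
\item finitely many sporadic relations with denominators dividing $30$.
\end{itemize}

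\textbf{Counting.} To bound $|\mathcal R(c)|$, assume there are three distinct representations. Each pair among them must be a nontrivial relation from the list above. For the one-parameter family, $c$ is determined by the free parameter $\theta$. I would show that two distinct family relations sharing a common side force $\theta$ into a finite set. Combined with the sporadic list, this leaves only finitely many candidate values of $c$, all with denominators dividing $30$.

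A direct enumeration over angles in $\frac{\pi}{30}\Z\cap[0,\pi]$ then gives the result. This check can be done by hand using the values of $\cos$ at multiples of $\pi/5$ and $\pi/3$. It should show that no $c$ has four representations and that exactly three arise precisely for $c\in\{\pm\frac12,\pm\cos\frac{\pi}{5},\pm\cos\frac{2\pi}{5}\}$. For example, $\frac12=\cos0+\cos\frac{2\pi}{3}=\cos\frac{\pi}{3}+\cos\frac{\pi}{2}=\cos\frac{\pi}{5}+\cos\frac{3\pi}{5}$. The sign symmetry $c\mapsto -c$ via $\theta\mapsto\pi-\theta$ halves the work.

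\textbf{Main obstacle.} The hardest step is the bookkeeping for the one-parameter family. One must show that combining two distinct instances with a common pair cannot produce infinitely many triples, and must track all degenerate coincidences, such as when an angle hits $0$ or $\pi$ or two angles coincide. I would handle this by substituting the parametric forms and reducing to a new short vanishing sum in $\theta$, which Poonen--Rubinstein again forces to have small denominator.
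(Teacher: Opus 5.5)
\textbf{Reduction.} Your reduction is the paper's. A second representation $\{\gamma,\delta\}$ makes $\{\alpha,\beta,\pi-\gamma,\pi-\delta\}$ a W{\l}odarski quadruple, and type (i) is exactly the trivial case excluded by $c\neq 0$.

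\textbf{The gap: a missing sporadic family.} Your counting step rests on a false claim. The sporadic relations do not all have denominators dividing $30$. The Poonen--Rubinstein weight-$8$ list contains $(R_7:R_3)$, which is W{\l}odarski's case (vii): $\{\frac{2\pi}{7},\frac{4\pi}{7},\frac{6\pi}{7},\frac{\pi}{3}\}$ and its twin. It gives coincidences such as $\cos\frac{2\pi}{7}+\cos\frac{4\pi}{7}=\cos\frac{\pi}{7}+\cos\frac{2\pi}{3}$. These values of $c$ lie in your finite candidate set, but an enumeration over $\frac{\pi}{30}\Z\cap[0,\pi]$ never sees them. So as written, your search does not prove $|\mathcal R(c)|\le 3$ for them.

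The repair is a separate check. Each pair inside a case (vii) quadruple lies in exactly one nontrivial W{\l}odarski quadruple, so these $c$ have exactly two representations. You should also state why a candidate $c$ has no representation outside the search range. Any further representation is linked to a known one by a W{\l}odarski quadruple, which forces its angles into the same finite set. Without this, a finite search only finds representations inside its own range.

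\textbf{The ``main obstacle'' is not one.} Every family relation has $\frac{\pi}{2}$ on one side. Take the type (ii) quadruple $\{\delta,\frac{2\pi}{3}-\delta,\frac{2\pi}{3}+\delta,\frac{\pi}{2}\}$ with $0\le\delta\le\frac{\pi}{3}$. Its three slots other than $\frac{\pi}{2}$ range over $[0,\frac{\pi}{3}]$, $[\frac{\pi}{3},\frac{2\pi}{3}]$ and $[\frac{2\pi}{3},\pi]$, agreeing at the endpoints. So any single angle of a pair fixes $\delta$. Hence a pair lies in at most one type (ii) quadruple, and two family relations can never share a side. No new vanishing sum is needed.

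\textbf{How the paper avoids the search.} The paper uses a local count instead of a global search.
\begin{itemize}
\item A fixed pair lies in at most one type (ii) quadruple.
\item By inspection, it lies in at most two sporadic ones.
\item If it lies in two sporadic quadruples, it lies in no type (ii) one.
\end{itemize}
So each pair has at most two nontrivial partners, and $|\mathcal R(c)|\le 3$ follows directly.

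For equality, each of the three pairs has at most one type (ii) relation, hence at least one sporadic relation. So at least two of the three relations are sporadic, and these share a pair lying in two sporadic quadruples. Only six such pairs exist, and reading off their quadruples gives the six triples.

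Your enumeration route can be completed once the heptagonal case is added. The incidence argument, however, needs no search and no second pass through Poonen--Rubinstein.
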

For sums with more cosine terms, however, the vanishing and multiplicity problems
become substantially more complicated.  The main purpose of this paper is to
study these higher-length questions in the structured sums
\eqref{eq:intro-CS} under some conditions on $N$ or $S$.

We now state the main vanishing results for the structured sums
\eqref{eq:intro-CS}.  For a prime $\ell\mid N$, let $C_\ell\leq G:=\Z/N\Z$ be the
unique subgroup of order $\ell$.  Write
\begin{equation*}
  A\left(u;\frac{N}{\ell}\right):=\sigma(u+C_\ell)+\sigma(-u+C_\ell).
\end{equation*}
When $N=p^aq^b$ with distinct odd primes $p<q$, put
\begin{equation*}
  D_{p,q}(N):=\sigma(C_p)+\sigma(C_q).
\end{equation*}
Then $\ev_N(A(u;\frac{N}{\ell}))=\ev_N(D_{p,q}(N))=0$.  Applying the Lam--Leung theory of vanishing sums of roots of unity \cite{lam2000vanishing}, we obtain the following results.

\begin{theoremintr}[Theorem \ref{thm:symmetric-vanish-paqb} and \ref{thm:weight-2p1}]\label{thm:vanishi}
    Let $G=\Z/N\Z$ and $S$ is a multiset on $G$.

\begin{enumerate}[label=(\arabic*)]
  \item Assume $N=p^a q^b$, where $p<q$ are distinct odd primes. Then $C_S(k)=0$ if and only if
  $\sym(kS)$ is a finite sum of blocks of the three
  types
  \[
     A\left(u;\frac{N}{p}\right),\qquad
     A\left(v;\frac{N}{q}\right),\qquad
     D_{p,q}(N),
  \]
  with $u,v\in G$.  

  \item Let $p$ be the smallest prime divisor of a general integer $N$ and assume $|S|=p$. Then $C_S(k)=0$ if and only if $\sym(kS)=A(u;N/p)$ for some $u\in G$. 
\end{enumerate}
\end{theoremintr}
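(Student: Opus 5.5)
Both parts start from the bridge \eqref{eq:intro-bridge}: $C_S(k)=0$ holds if and only if $X:=\sym(kS)\in\N[G]$ lies in $\ker(\ev_N)$. The ``if'' directions are immediate, since $\ev_N(A(u;N/\ell))=\ev_N(D_{p,q}(N))=0$. So the work is to decompose a \emph{symmetric} element of $\ker(\ev_N)\cap\N[G]$ into the stated blocks.

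\textbf{Part (1).} For $N=p^aq^b$ I would invoke the Lam--Leung structure theorem. When $N$ has at most two prime divisors, every element of $\ker(\ev_N)\cap\N[G]$ is an $\N$-combination of cosets $\sigma(x+C_p)$ and $\sigma(y+C_q)$. Write accordingly
\[
X=\sum_{i}\sigma(x_i+C_p)+\sum_j\sigma(y_j+C_q).
\]
The difficulty is that the individual cosets need not pair up with their negatives. The plan has four steps.

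\begin{enumerate}
\item Pass to the quotient descriptions. The $p$-coset part is determined by a function $f$ on $G/C_p$, and the $q$-coset part by a function $g$ on $G/C_q$. The decomposition is not unique: the relation $\sum_{x\in C_p+C_q}$ with $C_{pq}=C_p+C_q$ lets one trade $q$ cosets of $C_p$ inside a coset of $C_{pq}$ for $p$ cosets of $C_q$.
\item Use the symmetry $X=X^*$. Applying $x\mapsto -x$ gives a second decomposition $(f^*,g^*)$. Since $C_p\cap C_q=0$, the Lam--Leung analysis of nonuniqueness says that two decompositions differ only through full $C_{pq}$-coset trades. This yields $f-f^*=h\circ\pi_p$ and $g^*-g=h\circ\pi_q$ for some integer function $h$ on $G/C_{pq}$.
\item Choose a canonical decomposition, for instance one minimizing $\sum g$ within each $C_{pq}$-coset. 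Uniqueness of the canonical form then forces $f=f^*$ and $g=g^*$.
\item Split symmetric $f$ into pairs $\{\bar u,-\bar u\}$, which give the blocks $A(u;N/p)$. A self-inverse class $\bar u=-\bar u$ in $G/C_p$ has $2u\in C_p$; since $p$ is odd, $u\in C_p$, so $\bar u=0$. The same argument applies to $g$. What remains is multiples of $\sigma(C_p)$ and $\sigma(C_q)$ that are not paired.
\end{enumerate}

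The leftover $\sigma(C_p)$ counts must equal the leftover $\sigma(C_q)$ counts modulo pairing. This works because $2\sigma(C_p)=A(0;N/p)$ and $2\sigma(C_q)=A(0;N/q)$, so any excess can be absorbed into $A$-blocks. The remaining unpaired copies must be matched as $D_{p,q}(N)$; a parity and weight count should handle this.

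I expect step 3 to be the main obstacle: making the minimization canonical so that it commutes with negation. My first attempt would be to normalize so that, in each $C_{pq}$-coset, the $g$-multiplicity has minimum value $0$ over its $p$ sub-cosets. This rule is invariant under negation because negation maps cosets to cosets.

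\textbf{Part (2).} Here $|X|=2p$. I would use the Lam--Leung theorem that the weights of vanishing sums are in $\N p_1+\dots+\N p_r$, together with the fact that a minimal vanishing sum of weight below $2p$ is a rotated $\sigma(x+C_p)$. Decompose $X$ into minimal vanishing subsums; each has weight at least $p$. This leaves two cases.

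\begin{itemize}
\item \emph{$X$ is minimal of weight $2p$.} I would rule this out. One option is a result earlier in the paper classifying minimal sums of weight at most $2p$. Failing that, one can use the Mann/Lam--Leung bound that a minimal sum involving primes $p<q$ beyond a single coset has weight at least $(p-1)+(q-1)\ge 2p$ when $q>p+1$, with equality only in special $p=2$ configurations. For $p=2$, a weight-4 vanishing sum consists of two antipodal pairs.
\item \emph{$X=\sigma(x+C_p)+\sigma(y+C_p)$.} Symmetry gives $-x-C_p\in\{x+C_p,\,y+C_p\}$. If $-x+C_p=x+C_p$, then for odd $p$ we get $x\in C_p$ and likewise $y\in C_p$, so $X=2\sigma(C_p)=A(0;N/p)$. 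Otherwise $y=-x$ modulo $C_p$, giving $X=A(x;N/p)$.
\end{itemize}
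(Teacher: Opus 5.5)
Your Part (1) is correct, but it takes a different route from the paper. The paper never fixes a global coset decomposition. It inducts on $\eps(X)$ and picks a minimal vanishing subsum $Y\le X$, which is $\sigma(u+C_p)$ or $\sigma(v+C_q)$ by Theorem~\ref{thm:LamLeung}(1). If $u\notin C_p$, then $Y$ and $Y^*$ have disjoint supports and both lie below $X=X^*$, so $A(u;N/p)=Y+Y^*$ can be subtracted. What survives is $e\,\sigma(C_p)+e'\,\sigma(C_q)$ with $e+e'$ even.

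You instead fix a decomposition $(f,g)$ and show that any two decompositions differ by a function pulled back from $G/C_{pq}$. The clean reason is that a function on $G$ invariant under $C_p$ and under $C_q$ is invariant under $C_p+C_q=C_{pq}$. You then use the negation-equivariant normalization $\min g=0$ on each $C_{pq}$-coset to force $f=f^*$ and $g=g^*$. This is sound. It gives a canonical symmetric decomposition and an explicit description of all decompositions, whereas the paper's local peeling is shorter and needs no uniqueness analysis.

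Part (2) has a gap at its decisive step: ruling out a minimal vanishing sum of weight exactly $2p$. The pointer to ``a result earlier in the paper'' is too vague to count. Your only concrete argument, the bound $(p-1)+(q-1)$, gives merely $\ge 2p$ when $q=p+2$ (e.g.\ $p=3$, $q=5$), so it does not exclude weight $2p$. Your assertion that equality occurs only for $p=2$ has no support.

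The missing input is the precise Lam--Leung statement:
\begin{itemize}
\item for $s\le 2$, every minimal sum is a rotated $R_{p_i}$ (Theorem~\ref{thm:LamLeung}(1)), so its weight is prime and hence $\neq 2p$;
\item for $s\ge 3$, a non-symmetric minimal sum has weight at least $(p_1-1)(p_2-1)+(p_3-1)\ge 2p_1+2$ (Theorem~\ref{thm:LamLeung}(2)).
\end{itemize}
The same bound is what shows that your two weight-$p$ summands are translates of $C_p$. Your stated fact that every minimal sum of weight below $2p$ is a translate of $C_p$ is false as written (e.g.\ $\sigma(C_5)$ for $N=15$), though you only need it in weight exactly $p$.

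Second, for $p=2$ you handle the self-conjugate subcase only for odd $p$, and symmetry alone cannot finish it. When $4\mid N$, the coset $N/4+C_2$ is self-conjugate, and $[0]+[N/2]+[N/4]+[3N/4]=\sigma(C_2)+\sigma(N/4+C_2)$ is symmetric, vanishing and of weight $4$, yet not of the form $A(u;N/2)$. It is excluded only because $X=\sym(kS)$ has even coefficients at the fixed points $0$ and $N/2$ (Remark~\ref{rem:symmetrized-odd}). With this, both cosets are $C_2$ or both are $N/4+C_2$, giving $A(0;N/2)$ or $A(N/4;N/2)$. Alternatively, as the paper does, write $X=[a]+[-a]+[b]+[-b]$ and read off $b\equiv N/2\pm a$ from $\cos(2\pi a/N)+\cos(2\pi b/N)=0$.
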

When $N=2^a q^b$ with $q$ an odd prime, there is an analogous symmetric block
criterion, but the statement is more complicated.  The reason is that the
involution $x\mapsto -x$ has fixed points, and some translates of the prime
subgroups become self-conjugate rather than occurring in conjugate pairs.  We record the precise $2^a q^b$-block list in
Remark~\ref{rem:0 2q}.

The second main theme is small-weight Fourier rigidity. By identifying $k\in \Z/N\Z$ with the character 
\[
\chi_k:\Z/N\Z\to \mathbb{C},\quad n\mapsto \zeta_N^n:=e^{2\pi i n/N},
\]
the Fourier transform of a multiset $X$ on $\Z/N\Z$ is the function $\hat X:\Z/N\Z\to \mathbb{C}$ defined by
\[
\hat X(k)
:=
\sum_{u\in G}X(u)\zeta_N^{ku},
\qquad k\in \Z/N\Z.
\]
We have the following rigidity theorem for the small-weight Fourier transform.
\begin{theoremintr}[Theorem \ref{thm:rig}]\label{thm:fourier-rig}
Let $N$ be odd with $p$ being its smallest prime divisor. Let $G = \mathbb{Z}/N\mathbb{Z}$, and consider two multisets $X$ and $Y$ on $G$.
\begin{enumerate}[label=(\arabic*)]
  \item  Assume
$\max\{|X|,|Y|\}\le p$
and $\hat X(1)=\hat Y(1)\neq 0$. Then $X=Y$, or equivalently $\hat X(g)=\hat Y(g)$ for all $g\in G$.

  \item  Assume $X,Y$ are symmetric, $|X|=|Y|\le 2p$ and $\hat X(1)=\hat Y(1)\neq 0$. Then $X=Y$, or equivalently $\hat X(g)=\hat Y(g)$ for all $g\in G$.
\end{enumerate}
\end{theoremintr}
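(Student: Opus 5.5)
Set $Z:=X-Y\in\Z[G]$. Then $\hat X(1)=\hat Y(1)$ means $\ev_N(Z)=0$. Write $Z=P-Q$ with $P,Q\in\N[G]$ of disjoint support, obtained by cancelling the common part of $X$ and $Y$. Then $P+Q'$ is a vanishing sum of roots of unity, where $Q'$ denotes $Q$ translated by $N/2$ when $N$ is even. Since $N$ is odd, $-1$ is not an $N$-th root of unity, so we argue instead with the relation $\ev_N(P)=\ev_N(Q)$ directly. The tool is the Lam--Leung structure theory \cite{lam2000vanishing}. Any element of $\ker(\ev_N)\cap\Z[G]$ lies in the ideal generated by the $\sigma(C_\ell)$, $\ell\mid N$ prime, so it can be written as $\sum_{\ell} \sigma(C_\ell)\cdot a_\ell$ with $a_\ell\in\Z[G]$. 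In particular, any \emph{nonnegative} vanishing element has weight in $\N p+\N q+\cdots$, and a nonzero one has weight at least $p$.

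\textbf{Part (1).} Suppose $Z=P-Q\neq 0$ with $|P|,|Q|\le p$. If $\ev_N(P)=\ev_N(Q)$ with $P\ne Q$, I would first handle the case $Q=0$. Then $P$ is a nonzero nonnegative vanishing element of weight at most $p$, so by the Lam--Leung minimal weight $P=\sigma(u+C_p)$, which forces $\hat X(1)=0$ after restoring the common part. To make this airtight, I would argue more carefully. The key step is to show that in $\Z[G]$ with $N$ odd, every element $Z\in\ker\ev_N$ with $\|Z\|_1\le 2p$ has the form $\pm\sigma(u+C_p)\pm\sigma(v+C_p)$ or $\sigma(u+C_p)-\sigma(v+C_p)$, up to rare exceptions. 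I would prove this by reduction along the prime-power structure. Write $G\cong \Z/p^a\times H$, project modulo the subgroup structure, and use the standard lemma that $\ev_N(Z)=0$ iff for each coset of $C_p$ (in the case $p^a\|N$, $a\ge1$) the restriction has an appropriate form. Given that classification, $Z=\sigma(u+C_p)-\sigma(v+C_p)$ would give $X\supseteq u+C_p$, so $|X|=p$ and $X=\sigma(u+C_p)$ with $\hat X(1)=0$, a contradiction. The one-sided cases similarly force $\hat X(1)=0$ or $\hat Y(1)=0$. Hence $Z=0$. The equivalence with $\hat X=\hat Y$ everywhere is then trivial.

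\textbf{Part (2).} Here $X,Y$ are symmetric and $Z=X-Y$ is symmetric with $\|Z\|_1\le 4p$ and $\ev_N(Z)=0$. The trick is to exploit the symmetry to halve the weight. For odd $N$, $0$ is the only self-conjugate element, so write $X=X_0+X_1+X_1^*$, where $X_0$ is the multiplicity at $0$. Since $|X|=|Y|$, the multiplicities of $0$ in $X$ and $Y$ have the same parity. Now $\ev_N(Z)=2\,\mathrm{Re}\,\ev_N(Z_1)+z_0$, and vanishing of a real number is weaker than vanishing of a complex one, so instead I apply the Lam--Leung decomposition to $Z$ itself. The expected minimal symmetric pieces are $A(u;N/p)=\sigma(u+C_p)+\sigma(-u+C_p)$, of weight $2p$, and $\sigma(C_p)$ itself, of weight $p$, which is symmetric. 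Then $\|Z\|_1\le 4p$ and $|X|=|Y|\le 2p$ force $Z$ to be a difference $B_1-B_2$ of such blocks contained in $X$ and $Y$ respectively. A block contained in $X$ with $|X|\le 2p$ makes $\hat X(1)$ equal to $\ev_N$ of the remaining part. Since each block has weight at least $p$, that remainder has weight at most $p$, and I then reduce to Part (1) to conclude.

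The main obstacle is the classification of low-$\ell^1$-norm elements of $\ker\ev_N$ with mixed signs. Lam--Leung controls nonnegative sums, but not signed ones. I would try first the coset-restriction argument: write $N=p^a m$, reduce to $\Z/p\times(\Z/(N/p))$ via $\zeta_N$ and a basis of $\Q(\zeta_N)$ over $\Q(\zeta_{N/p})$ (or over $\Q(\zeta_{N/p^{a}})$ when $a=1$). This shows each fiber of $Z$ is constant on $C_p$-cosets, modulo a kernel element of the smaller field, and then I would induct on $N$.
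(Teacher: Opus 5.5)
Your proposal has a genuine gap. In both parts, all of the difficulty is handed off to an unproved structural claim about \emph{signed} elements of $\ker\ev_N$, and that claim is false as you state it.

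\textbf{Part (1).} You posit that every $Z\in\ker\ev_N$ with $\|Z\|_1\le 2p$ is $\pm\sigma(u+C_p)\pm\sigma(v+C_p)$ ``up to rare exceptions''. Two counterexamples:
\begin{itemize}
\item For $N=15$, the element $\sigma(C_5)$ lies in $\ker\ev_{15}$ and has norm $5\le 2p=6$.
\item For twin primes $q=p+2$, the element $\sigma(C_p)-\sigma(C_q)=\sigma(C_p\setminus\{0\})-\sigma(C_q\setminus\{0\})$ has norm $2p$, and neither signed part vanishes.
\end{itemize}
The statement you actually need bounds $\eps(Z^+)$ and $\eps(Z^-)$ separately: if both are at most $p$, then $Z^+$ and $Z^-$ each vanish. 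Then $|X|\le p$ forces $X=Z^+$, so $\hat X(1)=0$. You also say Lam--Leung do not control signed relations, but their comparison theorem (Theorem~\ref{thm:lam-leung-comparison}) is exactly about $\ev(x)=\ev(y)$ with $x,y\ge 0$.

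Your last paragraph correctly identifies the starting point: slices $Z_v$ agree modulo $\ker\ev_D$. The proof, however, lies in what comes after, and that is absent. The paper proceeds as follows.
\begin{itemize}
\item Take $M=pD$ square-free and write $Z=\sum_{v\in\Z/p\Z}[v]\otimes Z_v$. By Lemma~\ref{lem:F_v}(2), $\ev_D(Z_v^+)-\ev_D(Z_v^-)$ is a constant $S$.
\item Pigeonhole on $m_v=\eps(Z_v^+)+\eps(Z_v^-)$, using $\sum_v m_v\le 2p$. Either some $m_{v_0}\le 1$, which forces $Z_v=-B$ for all $v$ (up to $Z\mapsto -Z$), so $Z=0$ or $Z^-=\sigma(V\times\{b\})$ vanishes. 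Or all $m_v=2$; then the $(2,0)/(0,2)$ types are excluded and all $(1,1)$ slices coincide.
\item Every step uses that the smallest prime of $D$ is $q>p$. Hence no nonzero vanishing sum in $\N[\Z/D\Z]$ has weight $<q$, and induction applies to slices of weight $\le p+1<q$.
\item One then lifts to general odd $N$ via the basis $1,\zeta_N,\dots,\zeta_N^{Q-1}$ of $\Q(\zeta_N)/\Q(\zeta_M)$, handling cosets with zero evaluation separately.
\end{itemize}

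\textbf{Part (2).} Here the gap is more serious. $Z=X-Y$ is signed, and decomposition into minimal vanishing sums exists only for nonnegative elements. So nothing gives $Z=B_1-B_2$ with vanishing symmetric blocks $B_1\le X$, $B_2\le Y$. That assertion is essentially the whole theorem: it is Proposition~\ref{prop:rigid symm dis}, which says disjoint symmetric $A,B$ with equal weight $\le 2p$ and equal evaluation must vanish and have weight $2p$.

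Your list of blocks is also incomplete: $\sigma(v+C_q)$ is symmetric and vanishing of weight $q$, which can be at most $2p$. The twin-prime example shows that small signed relations need not split into vanishing pieces.

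Even granting the decomposition, ``reduce to Part (1)'' only returns $X-B_1=Y-B_2$, which holds by construction. You still need to argue $B_1=B_2$. This does hold: for odd $N$ the only symmetric coset of $C_p$ is $C_p$ itself, and a block of weight $2p$ forces $\hat X(1)=0$. But you did not say this.

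The paper handles the symmetric case by slicing the disjoint symmetric parts $A,B$ along $\Z/p\Z$ and using $F_{-v}=F_v^*$. Through Lemmas~\ref{lem:v-slice}, \ref{lem:u-slice} and \ref{lem:m_v>3} it shows the minimal off-zero slice weight is $4$. These lemmas depend on the comparison theorem and on the separate exclusion Lemma~\ref{lem:mv1-zero-slice} for $q=p+2$. A weight-$4$ minimum then forces all slices to coincide, so $A=\sigma(V)\otimes A_0$. None of this case analysis is present in your proposal or replaceable by the heuristic you give.
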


The condition $\hat X(1)=\hat Y(1)$ should be viewed as equality of the primitive Fourier spectra. Indeed, for
each $a\in G^\times$, the Galois automorphism $\sigma_a(\zeta_N)=\zeta_N^a$ satisfies $\sigma_a(\hat X(1))=\hat X(a)$. Hence $\hat X(1)=\hat Y(1)$ implies $\hat X(a)=\hat Y(a)$ for all $a\in G^\times$. Thus the results above say that, for multisets of sufficiently small weight,
equality of the primitive Fourier spectrum is rigid: equality of one non-zero primitive Fourier coefficient implies equality of the multisets under  small-weight hypotheses.

Our proof of Theorem~\ref{thm:fourier-rig} begins by translating the problem into the integral group-ring formalism. Within this framework, we apply a square-free reduction standard in the study of $N$-th roots of unity. Specifically, with $M = \operatorname{rad}(N)$ denoting the radical of $N$, we decompose $\mathbb{Z}/N\mathbb{Z}$ into cosets of the canonical subgroup of order $M$, which reduces the comparison to the square-free level. We then resolve this base level using Chinese-remainder slicing and induction, an approach analogous to the coset-slicing techniques featured in \cite{lenstra1978vanishing} and \cite{lam2000vanishing}.

The vanishing and multiplicity questions of $C_S(k)$ also have a direct spectral interpretation. Let
$G=\Z/N\Z$, and let $ S\subseteq G$ be a symmetric generating set. Then the eigenvalues of the cyclic Cayley graph
$(G,S)$ are
\begin{equation}\label{eq:intro-eigenvalues}
  \mu_k=\sum_{s\in S} \zeta_N^{ks}=C_S(k),
  \qquad k=0,1,\ldots,N-1 .
\end{equation}
This is z special case of the usual character formula for abelian Cayley graphs; see \cite{babai1979spectra}.  

Thus the vanishing and multiplicity questions of $C_S(k)$ characterize, respectively, the zero eigenvalues and the eigenvalue multiplicities of cyclic Cayley graphs. The eigenvalue problem also has some interesting applications and geometric background; see, for example,  \cite{jiang2021equiangular} and \cite{Uhlenbeck1976}.

A direct consequence of Theorem \ref{thm:two cos fib} and Theorem \ref{thm:fourier-rig} is a bound on the multiplicities of nonzero eigenvalues for the cyclic Cayley graph $(\Z/N\Z,S)$.   For any eigenvalue $\mu$, its multiplicity is defined by
\[
   \operatorname{mult}(\mu):=|\left\{k\in\Z/N\Z:\ \mu_k=\mu\right\}|. 
\]
   In the small-support cases considered below, and under the additional hypothesis
that $S$ contains a unit modulo $N$, we obtain the following bounds.

\begin{theoremintr}[Theorem \ref{thm:mle6} and \ref{thm:mleS}]\label{thm:m(u)}
Let $(\Z/N\Z,S)$ be the cyclic Cayley graph with symmetric
generating set $S$. Let $p$ be the smallest prime divisor of $N$. Assume that $|S|\le 2p $ and $S$ contains a unit. Then every nonzero
  eigenvalue $\mu$ satisfies 
  \[
\operatorname{mult}(\mu)\leq \begin{cases}
    |S|, & \text{if }p\ge 3 \\
   
    6,   & \text{if }p=2
\end{cases}
  \] 
  and this bound is optimal.

\end{theoremintr}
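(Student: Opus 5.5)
The plan is to turn the multiplicity question into a question about the Galois orbit of a single eigenvalue, and then apply Theorem~\ref{thm:fourier-rig}. Assume first that $N$ is odd, so $p\ge 3$. Let $\mu\neq 0$ and take $k$ with $\mu_k=\mu$. Put $X_k:=kS$, viewed as a multiset on $G$; it is symmetric, has weight $|S|\le 2p$, and satisfies $\widehat{X_k}(1)=\mu_k$. Suppose $\mu_k=\mu_{k'}=\mu\ne 0$. Then $\widehat{X_k}(1)=\widehat{X_{k'}}(1)\neq 0$ with $|X_k|=|X_{k'}|$, so Theorem~\ref{thm:fourier-rig}(2) gives $kS=k'S$ as multisets.

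It therefore suffices to bound the number of $k'$ with $k'S=kS$.

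\begin{itemize}
\item Since $S$ contains a unit $s_0$, the equality $k'S=kS$ shows that $k's_0\in kS$.
\item Hence $k'\in\{ks\,s_0^{-1}: s\in S\}$, a set of at most $|S|$ elements.
\end{itemize}

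This gives $\operatorname{mult}(\mu)\le |S|$.

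For optimality when $p\ge3$, take $S$ to be a symmetric subgroup $H\le G^\times$ with $|H|\le 2p$, for example $H=\{\pm1\}$ or $\{\pm1,\pm a\}$ where $a$ has small order. Then $\mu_{kh}=\mu_k$ for every $h\in H$, so a nonzero $\mu_1$ has multiplicity at least $|H|=|S|$. One still has to check that $\mu_1\ne0$; Theorem~\ref{thm:vanishi}(2) or a direct computation should handle this.

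Now suppose $p=2$. Here $|S|\le4$, and symmetry forces elements of order $\le2$ to appear singly. I would argue directly in this case.

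\begin{itemize}
\item If $|S|\le 2$, then $S\subseteq\{0,N/2\}\cup\{\pm u\}$. Using a unit $u$, $\mu_k=2\cos(2\pi ku/N)$, and each value is taken by at most two values of $ku$. This gives multiplicity at most $2$.
\item If $|S|\in\{3,4\}$, then $\mu_k$ has the form $2\cos\alpha+2\cos\beta$, or $2\cos\alpha+(\pm1\text{ or }1\pm1)$, where $\alpha=2\pi k s_1/N$ with $s_1$ a unit. Subtracting the fixed terms reduces to the two-cosine fiber. Theorem~\ref{thm:two cos fib} then allows at most $3$ unordered representations $\{\alpha,\beta\}$, and each representation corresponds to at most $2$ values of $k$ (from $\alpha\mapsto-\alpha$, since $s_1$ is a unit and fixes $k$ up to sign). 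This gives multiplicity at most $6$.
\end{itemize}

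Some care is needed when a term $\pm1$ coming from $N/2$ depends on $k$. In that case I would split according to the parity of $k$, or absorb the term as $\cos(\pi k)$ into a three-cosine sum. I expect this to be the main obstacle: a clean reduction to the two-cosine statement, plus showing that the value $6$ is attained. For attainment I would try $c=\pm\tfrac12$ with $S=\{\pm1,\pm a\}$ and a suitable $N$ such as $N=30$ or $N=60$, reading off the explicit triple of representations from Proposition~\ref{prop:three-reps}.
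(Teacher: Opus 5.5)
The odd case matches the paper's proof, but the $p=2$ count has a genuine gap.

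\textbf{Odd $N$.} You follow the paper. Theorem~\ref{thm:rig}(2) turns $\mu_k=\mu_{k'}\neq0$ into $kS=k'S$, and the unit $s_0$ then confines $k'$ to $\{kss_0^{-1}:s\in S\}$, exactly as in Lemma~\ref{lem:supp-rigidity-to-mult}. Your optimality construction, a symmetric subgroup $H\le G^\times$, is also the paper's. Your caution about $\mu_1\neq0$ is justified:
\begin{itemize}
\item For $0<|H|<2p$ it is automatic, since no even weight in $(0,2p)$ is a vanishing weight.
\item For $|H|=2p$ you must avoid $H=A(u;N/p)$, and this case does occur. For example, if $p^2\mid N$ then $\{\pm(1+jN/p)\}$ is a symmetric unit subgroup with $\mu_1=0$; this is $m=1+N/p$ in the paper's family.
\item $H=\{\pm1\}$ already shows that equality $\operatorname{mult}(\mu)=|S|$ is attained.
\end{itemize}
Your subcases $|S|\le 3$ for $p=2$ are also fine: splitting by the parity of $k$ gives at most $4$.

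\textbf{The gap: $p=2$, $S=\{\pm1,\pm a\}$.} You claim each unordered representation $\{\alpha,\beta\}\in\mathcal R(c)$ comes from at most two values of $k$, because the angle attached to the unit fixes $k$ up to sign. But the pair is unordered, and the unit's angle can be either member of it.

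The paper's extremal example refutes the claim. Take $N=30$, $S=\{\pm1,\pm3\}$, $c=\tfrac12$. Then $k=3,27$ give angles $(\pi/5,3\pi/5)$, while $k=9,21$ give $(3\pi/5,\pi/5)$. So the single pair $\{\pi/5,3\pi/5\}$ accounts for four values of $k$; the remaining $k=10,20$ come from $\{0,2\pi/3\}$. Consequently $|\mathcal R(c)|\le 3$ alone only gives $\operatorname{mult}(\mu)\le 12$.

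\textbf{The missing argument} (proof of Theorem~\ref{thm:mle6}):
\begin{enumerate}
\item Restrict to $0\le k\le N/2$ and set $\Theta(k)=\{2\pi k/N,\,2\pi b(k)/N\}$, where $b(k)$ is the reduction of $ak$ into $[0,N/2]$.
\item If $\Theta(k)=\Theta(\ell)$ with $k\neq\ell$, then $b(k)=\ell$ and $b(\ell)=k$. Hence $\gcd(k,N)=\gcd(\ell,N)$, and that pair has equal reduced denominators.
\item By Proposition~\ref{prop:three-reps}, at most one pair in $\mathcal R(c)$ has equal reduced denominators. So exceeding $6$ forces $|\mathcal R(c)|=3$ with all three pairs realised.
\item It remains to show that the pairs of denominator type $(2,3)$, $(2,5)$ or $(3,5)$ in the six exceptional triples are never of the form $\Theta(k)$. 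The required congruence $una\equiv\pm vm\pmod{2mn}$, or its reverse, fails modulo $2$, $3$ or $5$.
\end{enumerate}
So the real obstacle is this double-counting from swapping the two angles, not the $N/2$ term you flagged.
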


Rational eigenvalues have an additional Galois-theoretic structure.  If
$r\in\Q$ , then $\mu_k=r$ depends only on
$d=\gcd(k,N)$(see Section \ref{sec:rational} for details).  Consequently,
\begin{equation}\label{eq:rational}
    \operatorname{mult}(r)=
   \sum_{\substack{d\mid N\\ \mu_d=r}}
   \varphi\!\left(\frac{N}{d}\right)
\end{equation}
where $\varphi$ is the Euler's totient function.  Combining \eqref{eq:rational} with Theorem~\ref{thm:m(u)} yields an irrationality consequence for cosine sums (see Corollary \ref{cor:mu1-zero-or-irrational}).

Finally, we consider the special case where $N$ is square-free and the generating set $S$ is
a subgroup of the unit group $U_N=(\Z/N\Z)^\times$. For each divisor $m\mid N$, let $U_m=(\Z/m\Z)^\times$, and let
$S_m\leq U_m$ be the image of $S$ under the canonical projection $U_N \twoheadrightarrow U_m$. For
$c\in U_m$, put
\[
   \eta_{m,c}:=\sum_{u\in cS_m}\zeta_m^u
\]
The sums $\eta_{m,c}$ are Gaussian periods.  We also decompose the indices $k\in \Z/N\Z$ into layers
\[
   \Omega_m=\left\{k\in \Z/N\Z:\frac{N}{\gcd(k,N)}=m\right\}.
\]For an eigenvalue $\lambda$ and each divisor $m\mid N$, we define the $m$-layer multiplicity of $\lambda$ by
\[
\operatorname{mult}_m(\lambda):=
\left|\left\{k\in \Omega_m:\mu_k=\lambda\right\}\right|.
\]
Define $E_m:=\{\mu_k: k\in\Omega_m\}$ to be the set of eigenvalues in the $m$-th layer. We adopt the convention that $U_1=S_1=\{1\}$, $\eta_{1,1}=\zeta_1=1$, and $\Omega_1=\{0\}$.

\begin{theoremintr}[Theorems~\ref{thm:squarefree-subgroup-spectrum}
and~\ref{thm:layer-criterion}]
\label{thm:sf}
Assume that $N$ is square-free and $S\leq U_N$.  Then the spectrum of
$(\Z/N\Z,S)$ is described as follows.

\begin{center}
{\normalfont\small
\renewcommand{\arraystretch}{1.25}
\begin{tabularx}{\linewidth}{@{}cXccc@{}}
\toprule
\textbf{Layer}
&
\textbf{Eigenvalue set $E_m$}
&
\textbf{Number}
&
\textbf{Layer multiplicity}
&
\textbf{Degree over $\Q$}
\\
\midrule
$\Omega_m,\ m\mid N$
&
$\displaystyle
\{\frac{|S|}{|S_m|}\eta_{m,c}:cS_m\in U_m/S_m\}$
&
$\displaystyle \frac{\varphi(m)}{|S_m|}$
&
$\displaystyle |S_m|$
&
$\displaystyle \frac{\varphi(m)}{|S_m|}$
\\
\bottomrule
\end{tabularx}}
\end{center}
Moreover, For divisors $m,n>1$, we have either $E_m=E_n$ or $E_m\cap E_n=\emptyset$.
\end{theoremintr}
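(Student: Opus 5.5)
The plan is to compute $\mu_k$ layer by layer, using the square-free hypothesis and the fact that $S$ is a subgroup of units. Fix $m\mid N$ and $k\in\Omega_m$. Write $d=\gcd(k,N)=N/m$ and $k=dk'$ with $k'$ a unit modulo $m$. Then
\[
\mu_k=\sum_{s\in S}\zeta_N^{ks}=\sum_{s\in S}\zeta_m^{k's}.
\]
This depends only on $s \bmod m$, that is, on the image of $s$ under the projection $U_N\to U_m$. The projection $S\to S_m$ is a surjective group homomorphism, so every fibre has size $|S|/|S_m|$. Hence
\[
\mu_k=\frac{|S|}{|S_m|}\sum_{u\in S_m}\zeta_m^{k'u}=\frac{|S|}{|S_m|}\eta_{m,c},\qquad c=k' \bmod m .
\]
The map $k\mapsto k' \bmod m$ is a bijection $\Omega_m\to U_m$. (Square-freeness is not needed here; it is used later.) So $E_m$ is exactly the displayed set, and $\mu_k$ depends only on the coset $cS_m$.

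The next step is to show that distinct cosets give distinct values. The coset $cS_m$ corresponds under $\Gal(\Q(\zeta_m)/\Q)\cong U_m$ to the conjugates of $\eta_{m,1}$. By Galois theory it suffices to show that the stabilizer of $\eta_{m,1}$ is exactly $S_m$, i.e. that $\eta_{m,1}$ generates the fixed field of $S_m$. Suppose $\sigma_a(\eta_{m,1})=\eta_{m,1}$. Then
\[
\sum_{u\in aS_m}\zeta_m^u-\sum_{u\in S_m}\zeta_m^u=0 .
\]
Here square-freeness is essential. For square-free $m$ the primitive $m$-th roots of unity $\{\zeta_m^u:u\in U_m\}$ form a basis of $\Q(\zeta_m)$ over $\Q$; this is the normal integral basis, and it can be checked by CRT from the prime case, where $\zeta_p,\dots,\zeta_p^{p-1}$ is a basis. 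The relation above is a linear relation among these basis elements, so $aS_m=S_m$, i.e. $a\in S_m$. Consequently:
\begin{itemize}
\item there are exactly $\varphi(m)/|S_m|$ distinct values in $E_m$;
\item each value has layer multiplicity $|S_m|$, the size of the preimage of a coset in $U_m\cong\Omega_m$;
\item each value has degree $[U_m:S_m]=\varphi(m)/|S_m|$ over $\Q$.
\end{itemize}
The case $m=1$ is consistent with the stated conventions.

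The last step, which I expect to be the main obstacle, is the dichotomy for $m,n>1$: either $E_m=E_n$ or $E_m\cap E_n=\emptyset$. Suppose $\frac{|S|}{|S_m|}\eta_{m,c}=\frac{|S|}{|S_n|}\eta_{n,c'}$. Applying $\Gal(\Q(\zeta_N)/\Q)$ to both sides, the element $\sigma_a$ acts on the left through $a \bmod m$ and on the right through $a \bmod n$. Since $U_N\to U_m$ is surjective, the Galois orbit of the left-hand value is all of $E_m$, and likewise the orbit of the right-hand value is all of $E_n$. So equal values have equal orbits, giving $E_m=E_n$.

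The remaining check is that the map from orbit elements to sets is well defined. The Galois orbit of one element of $E_m$ is the whole set $E_m$, so a common element forces $E_m=E_n$. This closes the argument, and no finer analysis of when $E_m=E_n$ actually occurs is needed. The one point to verify carefully is the normal-basis claim for square-free $m$, which follows from $\Q(\zeta_m)=\bigotimes_{p\mid m}\Q(\zeta_p)$ together with the product decomposition $\zeta_m^u=\prod_p\zeta_p^{u_p}$.
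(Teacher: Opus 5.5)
Your proposal is correct and follows essentially the paper's route. It uses fibre-counting under $S\to S_m$ for the layer formula, and the normal basis of primitive $m$-th roots (for square-free $m$) to show that the stabilizer of $\eta_{m,1}$ is exactly $S_m$. For the dichotomy it uses a Galois-orbit argument, which the paper phrases as irreducibility of $Q_m(X)$, so that a shared root forces $Q_m=Q_n$.
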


The paper is organized as follows. Section~\ref{sec:prelim-cosines} recalls the group-ring formalism for vanishing sums of roots of unity, discusses rational cosine sums, provides a roots-of-unity proof of W{\l}odarski's theorem, and proves Theorem~\ref{thm:two cos fib}. Section~\ref{sec:zero} establishes the vanishing criteria presented in Theorem~\ref{thm:vanishi}. Section~\ref{sec:rigidity} is devoted to the proof of Theorem~\ref{thm:fourier-rig}. Finally, Section~\ref{sec:app cay} applies these algebraic results to cyclic Cayley graphs, deriving the zero-eigenvalue criteria, the multiplicity bounds of Theorem~\ref{thm:m(u)}, and the description of square-free unit-subgroup spectra in Theorem~\ref{thm:sf}.

\section{Vanishing sum of roots of unity and W{\l}odarski's theorem}\label{sec:prelim-cosines}

This section collects the background used throughout the paper.  We first recall
vanishing sums of roots of unity and the integral group-ring formalism.  We then
turn to rational cosine sums, give a root-of-unity proof of W{\l}odarski's
four-cosine theorem, and introduce the structured sums $C_S(k)$.

\subsection{Vanishing sums of roots of unity and group ring formalization}\label{sec:vanishing-roots}

We first recall the basic facts on vanishing sums of roots of unity and the
integral group-ring notation used throughout the paper.

\begin{definition}\label{def:vanishing-minimal}
A \emph{vanishing sum of roots of unity} is a formal relation $M=\sum\limits_{i=1}^r a_i \eta_i = 0$, where $a_i\in\Z_{>0}$ and each $\eta_i$ is a root of unity.
The vanishing sum $M$ is called \emph{minimal} if it cannot be written as two non-trivial vanishing sums of roots of unity, i.e., if 
	\[
	\sum_{i=1}^{k}b_i\eta_{i}=0,\quad a_i\ge b_i\ge 0
	\]
	implies either $b_i=a_i$ for all $i$ or $b_i=0$ for all $i$.

Its \emph{weight} is $\eps(M):=\sum\limits_{i=1}^r a_i$. For $N\ge 1$, let $W(N)$ be the set of weights of vanishing sums of $N$-th roots of unity.
\end{definition}

\begin{example}\label{example:vanishing sum}
	Let $p$ be a prime and $\zeta_p$ be a primitive $p$-th root of unity of weight $p$. The vanishing sum $R_p:=1+\zeta_p+\zeta_p^2+\cdots+\zeta_p^{p-1}$ is a minimal vanishing sum of roots of unity. So is any rotation $\zeta R_p$, obtained by multiplying by a root of unity $\zeta$. This type of minimal vanishing sum is called symmetric. A non symmetric minimal vanishing sum is called asymmetric. For example, if $p,q,r$ are three distinct primes, then the sum
	\[
	(\zeta_p+\cdots+\zeta_p^{p-1})(\zeta_q+\cdots+\zeta_q^{q-1})+\zeta_r+\cdots+\zeta_r^{r-1}
	\]
	is an asymmetric vanishing sum of roots of unity of weight $(p-1)(q-1)+(r-1)$.
\end{example}

We have the following theorem to characterize minimal vanishing sums of roots of unity.

\begin{theorem}\label{thm:LamLeung}
Let $N=p_1^{r_1}\cdots p_s^{r_s}$ with distinct primes $p_1<\cdots<p_s$ and $r_i\in \Z_{>0}$.
\begin{enumerate}[label=(\arabic*)]
\item{\rm \cite[Corollary 3.4]{lam2000vanishing}} If $s=2$, then up to rotations, the only minimal vanishing sums of $N$-th roots of unity are $R_{p_1}$ and $R_{p_2}$, i.e., they are symmetric.
\item{\rm(\cite[Theorem 4.8]{lam2000vanishing})} Assume $s\ge 3$. If $M$ is a minimal vanishing sum of $N$-th roots of unity, then either $M$ is symmetric or $\varepsilon(M)\ge (p_1-1)(p_2-1)+(p_3-1)$.

\item{\rm(\cite[Theorem 5.2]{lam2000vanishing})} The set of weights satisfies $W(N)=\N p_1+\cdots+\N p_s$.

\end{enumerate}
\end{theorem}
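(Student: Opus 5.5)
These three statements are quoted from \cite{lam2000vanishing}. If I had to reprove them, I would work throughout in $\Z[G]$ with $G=\Z/N\Z$, where nonnegative vanishing sums are exactly the elements of $\ker(\ev_N)\cap\N[G]$.

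\emph{Reduction to square-free $N$.} Suppose $p^2\mid N$. Then $[\Q(\zeta_N):\Q(\zeta_{N/p})]=p$, and $1,\zeta_N,\dots,\zeta_N^{p-1}$ form a basis of this extension. Decompose $G$ into the $p$ cosets of the subgroup of index $p$, and split $x\in\N[G]$ accordingly as $x=\sum_{i=0}^{p-1}x_i$. Then $\ev_N(x)=0$ forces $\ev_N(x_i)=0$ for every $i$. Hence a minimal vanishing sum is supported in a single coset, and after a rotation it is a vanishing sum of $(N/p)$-th roots of unity. Iterating this, parts (1)--(3) reduce to the case $N=\operatorname{rad}(N)$. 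Weights are unchanged by the reduction, and rotations of $R_p$ remain rotations of $R_p$.

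\emph{Part (1) with $N=pq$.} I would identify $G\cong \Z/p\times\Z/q$. The kernel of $\ev_{pq}$ on $\Z[G]$ is the $\Z$-span of the translates $\sigma(u+C_p)$ and $\sigma(v+C_q)$. One way to see this is that $\ker(\ev_{pq})$ is the ideal generated by $\Phi_{pq}$, and a rank count gives $\operatorname{rank}=pq-(p-1)(q-1)=p+q-1$. Concretely, $x$ lies in the kernel if and only if $x(i,j)=f(j)+g(i)$, where $f$ is a function on the $C_p$-cosets and $g$ is a function on the $C_q$-cosets. Now suppose $x\ge 0$. Since $\min x=\min f+\min g\ge 0$, replace $f$ by $f-\min f$ and $g$ by $g+\min f$. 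Both are then nonnegative, which exhibits $x$ as an $\N$-combination of translates of $R_p$ and $R_q$. Therefore every minimal sum is a single rotated $R_p$ or $R_q$. The same argument proves part (3) for $s=2$, since the weight is then $p\cdot(\text{number of }R_p\text{ translates})+q\cdot(\text{number of }R_q\text{ translates})$.

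\emph{Parts (2) and (3) for $s\ge 3$.} The inclusion $W(N)\supseteq \N p_1+\cdots+\N p_s$ is immediate, because disjoint unions of rotated $R_{p_i}$ vanish. For the reverse inclusion and for (2), I would induct on $s$ by slicing along $p=p_s$. Write $N=pN'$ and expand $x=\sum_{i\in\Z/p}[i]\,x_i$ with $x_i\in\N[\Z/N']$. Since $\{\zeta_p^i\}_{i\ne 0}$ is a basis over $\Q(\zeta_{N'})$, the condition $\ev(x)=0$ becomes $\ev_{N'}(x_i)=\ev_{N'}(x_0)$ for all $i$. In other words, all slices have the same value $\alpha\in\Z[\zeta_{N'}]$. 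If $\alpha$ is the value of some nonnegative element $y$ with $y\le x_i$ for all $i$, then one can peel off $y\cdot R_p$ and use induction on the differences $x_i-y$, which are vanishing sums for $N'$. The main obstacle is precisely the case where no such common lower bound $y$ exists. That case produces the asymmetric examples such as $(\zeta_p+\cdots+\zeta_p^{p-1})(\ldots)+\ldots$. Here I would follow Lam--Leung: show that each slice must then carry weight at least $(p_1-1)(p_2-1)$ coming from an asymmetric $N'$-sum, together with at least $p_3-1$ further terms, which gives the lower bound in (2). For (3), I would show by a counting argument on the $p$ slices that the total weight is always a sum $\sum_i \varepsilon(x_i)$, with each $\varepsilon(x_i)$ lying in $W(N')+\N p$ adjusted appropriately; this is the most delicate step, and I would defer to \cite[Theorem 5.2]{lam2000vanishing} for the detailed bookkeeping.
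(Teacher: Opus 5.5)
The paper gives no proof of this theorem; it only cites Lam--Leung. Your coset reduction to square-free $N$ is correct. Your two-prime argument is also correct and self-contained. The rank count identifies $\ker(\ev_{pq})\otimes\Q$ with the functions $f(j)+g(i)$. After your normalization, $f(j)-\min f$ and $g(i)+\min f$ are differences, resp.\ values, of $x$, hence nonnegative integers. So every element of $\N[G]\cap\ker(\ev_{pq})$ is an $\N$-combination of translates of $\sigma(C_p)$ and $\sigma(C_q)$, which gives (1), and (3) for $s\le 2$. In $W(N)\supseteq\N p_1+\cdots+\N p_s$, drop the word ``disjoint'': multiplicities are allowed, so it is unnecessary, and it is not always achievable.

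For $s\ge 3$ there is a genuine gap, and the mechanism you describe is false. Slice Example~\ref{example:vanishing sum} (primes $p<q<r$) along $r$:
\begin{itemize}
\item on $\Z/pq\Z$ there are no asymmetric vanishing sums at all, by (1);
\item the $0$-slice is the product term, of weight $(p-1)(q-1)$ and value $1$, so it is not a vanishing sum;
\item each of the other $r-1$ slices equals $[0]$.
\end{itemize}
So slices need not carry weight $(p_1-1)(p_2-1)$, and the heavy one does not come from an asymmetric $N'$-sum. Your plan for (3), placing $\eps(x_i)$ in something like $W(N')+\N p$, fails for the same reason: slices are not vanishing sums, and here $\eps(x_i)=1$.

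The missing ingredient is the comparison theorem, Theorem~\ref{thm:lam-leung-comparison}. With it, your slicing along $p_s$ does give (2):
\begin{itemize}
\item If $\alpha=0$, minimality puts $z$ in one slice, and you induct, using (1) when $s=3$.
\item If $\alpha\ne 0$, take a lightest slice $x_{i_0}$, of weight $k$. If $x_{i_0}\le x_j$ for all $j$, minimality forces $z$ to be a rotated $R_{p_s}$.
\item Otherwise some $x_j\not\ge x_{i_0}$. For $k\le p_1-1$, the comparison theorem on $\Z/N'\Z$ gives $\eps(x_j)\ge (p_1-k)(p_2-1)$. Hence $\eps(z)\ge (p_1-k)(p_2-1)+(p_s-1)k\ge (p_1-1)(p_2-1)+(p_3-1)$.
\item For $k\ge p_1$, already $\eps(z)\ge p_1p_s$, which exceeds this bound.
\end{itemize}

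Part (3) is then not the delicate step at all. Split a vanishing sum into minimal ones. If all are symmetric, the weight lies in $\N p_1+\cdots+\N p_s$. If one is asymmetric, (2) makes the total weight at least $(p_1-1)(p_2-1)$. Every integer $\ge p_1p_2-p_1-p_2+1$ lies in $\N p_1+\N p_2$ (Sylvester). So what your sketch actually leaves unproved is the comparison theorem itself, which is where the substance of Lam--Leung lies.
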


It is convenient to encode multisets as nonnegative elements of the integral group ring
$\Z[\Z/N\Z]$, and to view cyclotomic sums as the image of an evaluation map
$\ev_N:\Z[\Z/N\Z]\to \Z[\zN]$. We introduce the group-ring formalism
and record the kernel structure that will be used in later sections.

\begin{definition}\label{def:grp ring}
    Let $N \ge 1$ and let $G := \mathbb{Z}/N\mathbb{Z}$ (written additively). Let $\mathbb{Z}[G]$ denote its integral group ring, with basis elements $[x]$ for $x \in G$. An arbitrary element $F \in \mathbb{Z}[G]$ can be written as $F = \sum\limits_{x \in G} c(x)[x]$ with coefficients $c(x) \in \mathbb{Z}$.

(1) We define the involution  $F^*:=\sum\limits_{x\in G}c(x)[-x]$. We say that $F$ is symmetric if $F^* = F$, or equivalently, $c(x) = c(-x)$ for all $x$. We denote $\operatorname{Sym}(F) = F + F^*\in \Z[G].$

(2) The augmentation map is
\[
\eps:\Z[G]\to \Z,\qquad
\sum_x c(x)[x]\mapsto\sum_x c(x).
\]If $F\in \N[G]$, we also call $\eps(F)$ the weight of $F$.

(3) Let $\zeta_N:=e^{2\pi i/N}$. Define the ring homomorphism
\[
\operatorname{ev}_N:\Z[G]\to \Z[\zeta_N],\qquad
\operatorname{ev}_N\!\Big(\sum_x c(x)[x]\Big)=\sum_x c(x)\,\zeta_N^x.
\]

(4) For a finite multiset $T$ in $G$, define its incidence element by $\sigma(T):=\sum\limits_{t\in T}[t]\in \N[G]$. We freely identify $T$ with $\sigma(T)$ whenever convenient.

(5) Let $F^+:=\sum\limits_{x\in G} c^{+}(x)[x]$, $F^-:=\sum\limits_{x\in G} c^{-}(x)[x]\in\N[G]$ be the positive/negative parts of $F$, i.e., $c^+(x)=\max(c(x),0)$, $c^-(x)=\max(-c(x),0)$. We have $F=F^+-F^-$.

(6) We can define a partial order on $\Z[G]$ as follows:

\[
F=\sum_{x\in G} c(x)[x]\ge H=\sum_{x\in G} d(x)[x]
\]
if $c(x)\ge d(x)$ for all $x\in G$.

(7) We define the support of $F$ as $\operatorname{supp}(F) := \{x \in G :\  c(x) \neq 0\}$ and let $\eps_0(F) := |\operatorname{supp}(F)|$. Two elements $F$ and $H$ in $\Z[G]$ are said to be disjoint if $\operatorname{supp}(F) \cap \operatorname{supp}(H) = \emptyset$.
\end{definition}

\begin{remark}\label{rem:symmetrized-odd}
An element $X\in\N[G]$ is of the form $X=\sym(Y)$ for some $Y\in\N[G]$ if and only if $X=X^*$ and the coefficients of all fixed points of $x\mapsto -x$ are even.
Equivalently, if $N$ is odd this means that $X$ is symmetric and $\eps(X)$ is even, while if $N$ is even it means that $X$ is symmetric and the coefficients of $[0]$ and $[N/2]$ are even.
\end{remark}

In the group-ring formalization, a vanishing sum of $N$-th roots of unity can be identified with an element $X \in \mathbb{N}[G] \cap \ker(\ev_N)$. We recall a Theorem of R\'edei \cite{redei1950beitrag}, de Bruijn \cite{debruijn1953factorization} and Schoenberg \cite{schoenberg1964note} that describe the structure of $\ker(\operatorname{ev}_N)$; Lam--Leung \cite[Theorem 2.2]{lam2000vanishing} reproved it using group-ring methods.

\begin{theorem}\label{thm:ker ev}
{\rm (\cite[Theorem 2.2]{lam2000vanishing})} Let $N\ge 1$ and $G=\Z/N\Z$.
For each prime $\ell\mid N$, let $C_\ell\le G$ be the unique subgroup of order $\ell$.
Then $\ker(\operatorname{ev}_N)=\sum\limits_{\ell\mid N}\Z[G]\cdot \sigma(C_\ell)$.

\end{theorem}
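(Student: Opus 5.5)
The inclusion $\sum_{\ell\mid N}\Z[G]\sigma(C_\ell)\subseteq\ker(\ev_N)$ is immediate. Since $\ev_N$ is a ring homomorphism, it suffices to check $\ev_N(\sigma(C_\ell))=0$. Indeed $C_\ell=\{0,N/\ell,\dots,(\ell-1)N/\ell\}$, so $\ev_N(\sigma(C_\ell))=\sum_{j=0}^{\ell-1}\zeta_\ell^j=0$.

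For the reverse inclusion we use the identification $\Z[G]\cong\Z[x]/(x^N-1)$, with $[1]\mapsto x$. Under it, $\ev_N$ becomes evaluation at $\zeta_N$, so $\ker(\ev_N)$ corresponds to the ideal $(\Phi_N(x))/(x^N-1)$. Here we use that $\Phi_N$ is the minimal polynomial of $\zeta_N$ over $\Q$ and is monic, so Gauss's lemma and division by a monic polynomial show that an integer polynomial vanishing at $\zeta_N$ is divisible by $\Phi_N$ in $\Z[x]$. Likewise $\sigma(C_\ell)$ corresponds to $\Psi_\ell(x):=1+x^{N/\ell}+\dots+x^{(\ell-1)N/\ell}=\frac{x^N-1}{x^{N/\ell}-1}$. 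The claim is therefore equivalent to the identity of ideals in $\Z[x]$
\[
(\Phi_N(x),\,x^N-1)=(\Phi_N(x))=\Bigl(\tfrac{x^N-1}{x^{N/\ell}-1}:\ \ell\mid N\Bigr)+(x^N-1).
\]
Each generator $\Psi_\ell$ is divisible by $\Phi_N$: $\Psi_\ell=\prod_{d\mid N,\,d\nmid N/\ell}\Phi_d$, and $d=N$ occurs in this product. So it remains to show that $\Phi_N$ lies in the ideal $I=(\Psi_\ell:\ell\mid N)$.

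We argue by induction on the number $s$ of distinct primes of $N$, working with $\gcd$-type combinations over $\Z$.

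\textbf{Case $s=1$.} Here $N=p^r$ and $\Psi_p=\Phi_{p^r}$ directly.

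\textbf{Inductive step.} We aim to show that the ideal generated by the $\Psi_\ell$ equals $(\Phi_N)$. Write $\Psi_\ell=\Phi_N\cdot Q_\ell$ with $Q_\ell=\prod_{d\mid N,\,d\ne N,\,d\nmid N/\ell}\Phi_d$. It suffices to show that the $Q_\ell$ generate the unit ideal in $\Z[x]$. Over $\Q[x]$ this is easy: a common root would be a primitive $d$-th root of unity with $d\mid N$, $d\neq N$ and $d\nmid N/\ell$ for every $\ell\mid N$. But $d\ne N$ forces $d\mid N/\ell$ for some $\ell$, a contradiction.

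Over $\Z$ the same conclusion must hold modulo every prime $q$, i.e.\ the $Q_\ell$ must have no common factor in $\mathbb F_q[x]$. This is the main obstacle, because cyclotomic polynomials collide mod $q$ when $q\mid N$. My plan for it is the classical resultant fact: $\operatorname{Res}(\Phi_a,\Phi_b)=\pm1$ unless $a/b$ is a prime power (up to swapping). Suppose $\Phi_a\mid Q_\ell$ and $\Phi_b\mid Q_{\ell'}$ share a factor mod $q$. Then $a/b=q^{\pm e}$, and we derive a contradiction using the constraints $a\nmid N/\ell$, $b\nmid N/\ell'$ for all choices. For this step I would first try the simpler route of writing $\Phi_N$ explicitly as a $\Z[x]$-combination, following Lam--Leung: reduce to squarefree $N$ via $\Phi_N(x)=\Phi_{\mathrm{rad}N}(x^{N/\mathrm{rad}N})$. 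Then in the squarefree case use the inclusion–exclusion identity $\Phi_N=\prod_{d\mid N}(x^d-1)^{\mu(N/d)}$, combined with induction on $s$ via $\Z[G]\cong\Z[C_{p^a}]\otimes\Z[\Z/(N/p^a)]$. The kernel of the evaluation on the tensor product is then the sum of the kernels of the two factors tensored up, since $\Z[\zeta_N]=\Z[\zeta_{p^a}]\otimes\Z[\zeta_{N/p^a}]$, which holds because the discriminants of the two factors are coprime.
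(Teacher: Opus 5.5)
The paper does not prove this statement; it only quotes Lam--Leung (after R\'edei, de Bruijn and Schoenberg), so your argument has to stand on its own.

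\textbf{What is correct.} Your reductions hold. $\ker(\ev_N)$ corresponds to $(\Phi_N)/(x^N-1)$, and $\sigma(C_\ell)$ corresponds to $\Psi_\ell$. Also $x^N-1\in(\Psi_\ell:\ell\mid N)$. Since $\Z[x]$ is a domain, the claim is equivalent to $(Q_\ell:\ell\mid N)=\Z[x]$. The criterion via $\mathbb{F}_q[x]$ is right too, but you should note that it is sufficient and not only necessary. The $\Q[x]$ step puts a nonzero integer $n$ into the ideal $J$. If $J+q\Z[x]=\Z[x]$ for all primes $q\mid n$, then $R=\Z[x]/J$ satisfies $R=nR=0$.

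\textbf{The step that would fail.} You propose to rule out a common factor mod $q$ by taking $\Phi_a\mid Q_\ell$ and $\Phi_b\mid Q_{\ell'}$ for a pair $\ell,\ell'$ and deriving a contradiction. For $s\ge 3$, two of the $Q_\ell$ can share a factor already over $\Q$. For $N=30$ you get $Q_2=\Phi_2\Phi_6\Phi_{10}$, $Q_3=\Phi_3\Phi_6\Phi_{15}$ and $Q_5=\Phi_5\Phi_{10}\Phi_{15}$. These are pairwise non-coprime, although the three together have no common factor. So the contradiction must use all $\ell$ at once, and then no resultants are needed:
\begin{itemize}
\item Write $N=q^fN'$ with $q\nmid N'$.
\item Modulo $q$ one has $\Phi_{q^ed'}\equiv\Phi_{d'}^{\varphi(q^e)}$, and the $\bar\Phi_{d'}$ with $q\nmid d'$ are separable and pairwise coprime.
\item Hence a common irreducible factor of all $\bar Q_\ell$ comes from a single $d'\mid N'$ such that for every $\ell\mid N$ there is $d_\ell=q^{e_\ell}d'$ with $d_\ell\mid N$, $d_\ell\ne N$ and $d_\ell\nmid N/\ell$.
\item For $\ell\ne q$ this forces $v_\ell(d')=v_\ell(N')$, hence $d'=N'$.
\item If $q\nmid N$, this gives $d_\ell=N$. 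If $q\mid N$, the condition for $\ell=q$ forces $e_q=f$, i.e.\ $d_q=N$. Either way you get a contradiction.
\end{itemize}

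\textbf{Your fallback route already suffices.} It is a complete proof and makes the polynomial detour unnecessary. For $N=p^am$ with $p\nmid m$, identify $\Z[G]\cong\Z[\Z/p^a\Z]\otimes\Z[\Z/m\Z]$ and $\Z[\zeta_N]\cong\Z[\zeta_{p^a}]\otimes\Z[\zeta_m]$ compatibly with the evaluations. Right exactness gives $\ker(f\otimes g)=\ker f\otimes B+A\otimes\ker g$ for surjections $f\colon A\to A'$ and $g\colon B\to B'$. Induction on $s$, with base case $\Psi_p=\Phi_{p^a}$, finishes the proof, since each $\sigma(C_\ell)$ is $\sigma(C_\ell)\otimes[0]$ or $[0]\otimes\sigma(C_\ell)$.

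Neither the reduction to square-free $N$ nor the M\"obius product formula for $\Phi_N$ plays any role there. The latter is an identity of rational functions and does not by itself produce a $\Z[x]$-combination. Nor do you need discriminants, or the fact that $\Z[\zeta_n]$ is a ring of integers, for the tensor isomorphism. $\Phi_{p^a}$ is monic and, by the degree count $\varphi(N)=\varphi(p^a)\varphi(m)$, stays irreducible over $\Q(\zeta_m)$. Hence $\Z[\zeta_m][x]/(\Phi_{p^a})\to\Z[\zeta_N]$ is an isomorphism.
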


We conclude this preliminary subsection by recording a useful comparison theorem of Lam–Leung
for square-free $N$, which will be used later in Section \ref{sec:rigidity}.
\begin{theorem}\label{thm:lam-leung-comparison}{\rm(\cite[Theorem 4.1, Corollary 4.7]{lam2000vanishing})}
Let $N=p_1\cdots p_s$ be square-free with $s\ge 2$ and $p_1<\cdots<p_s$ primes, and let $G=\Z/N\Z$.
Suppose that $x,y\in\N[G]$ satisfy $\ev_N(x)=\ev_N(y)$. Let $\nu(z)$ denote either $\eps_0(z)$ or $\eps(z)$.
If $\nu(x)\le p_1-1$, then either $\nu(y)\ge \bigl(p_1-\nu(x)\bigr)(p_2-1)$ or $y\ge x$.
\end{theorem}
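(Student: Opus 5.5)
We plan to argue by induction on $s$, using Chinese-remainder slicing along one prime factor. Write $N=pN'$ with $p$ a prime divisor of $N$ to be chosen, and identify $G=\Z/p\Z\times \Z/N'\Z$. Every $z\in\Z[G]$ decomposes as $z=\sum_{i=0}^{p-1}[i]\otimes z_i$ with slices $z_i\in\Z[\Z/N'\Z]$. Since $\gcd(p,N')=1$, the elements $1,\zeta_p,\dots,\zeta_p^{p-2}$ are linearly independent over $\Q(\zeta_{N'})$, and the only relation among them is $\sum_i\zeta_p^i=0$. Hence
\[
\ev_N(x)=\ev_N(y)\iff \ev_{N'}(x_i)-\ev_{N'}(y_i)=c\ \text{for all } i,
\]
for a single $c\in\Z[\zeta_{N'}]$. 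This is the group-ring form of Theorem~\ref{thm:ker ev}. It also lets us cite the statement for $N'$ as an induction hypothesis, or the prime case below when $N'$ is prime. We will treat $\nu=\eps_0$ and $\nu=\eps$ simultaneously, using only that $\nu$ is additive over slices and that $\nu(z)\ge 1$ for $z\neq 0$.

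The prime case $N=\ell$ serves as the base. Here $x-y=c\,\sigma(C_\ell)$ with $c\in\Z$. If $c>0$ then $x\ge c\,\sigma(C_\ell)$, so $\nu(x)\ge \ell$, which is impossible when $\nu(x)\le p_1-1$. Hence $c\le 0$ and $y\ge x$.

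For the inductive step we slice along $p=p_1$, so that $N'=p_2\cdots p_s$ has smallest prime $p_2$. Because $\nu(x)\le p_1-1$, at least $p_1-\nu(x)$ of the slices $x_i$ vanish.

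\emph{Case $c=0$.} Then $\ev_{N'}(x_i)=\ev_{N'}(y_i)$ for each $i$, and $\nu(x_i)\le\nu(x)\le p_1-1\le p_2-1$. We apply the statement for $N'$ (the prime case if $s=2$) to each pair $(x_i,y_i)$. If every slice gives $y_i\ge x_i$, then $y\ge x$. Otherwise some slice has $\nu(y_i)\ge (p_2-\nu(x_i))(p_3-1)$, and this is already at least $(p_1-\nu(x))(p_2-1)$, since $p_2-\nu(x_i)\ge p_1-\nu(x)$ and $p_3>p_2$.

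\emph{Case $c\neq 0$.} Each empty slice $i$ satisfies $\ev_{N'}(y_i)=-c\neq 0$. It therefore suffices to prove $\nu(y_i)\ge p_2-1$ for each of the at least $p_1-\nu(x)$ empty slices; summing then gives $\nu(y)\ge (p_1-\nu(x))(p_2-1)$.

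This lower bound for a single empty slice is the main obstacle. Our first attempt is to fix an empty slice $i$ and any slice $j$ with $x_j\neq 0$; such a $j$ exists, since $x=0$ is trivial. From the slice relations we get $\ev_{N'}(x_j)=\ev_{N'}(y_j+y_i)$. We then apply the induction hypothesis at level $N'$ to the pair $(x_j,\,y_j+y_i)$, which is allowed because $\nu(x_j)\le p_1-1<p_2$. If the conclusion is $y_i+y_j\ge x_j$, we want to shift $y_i$ by a suitable block and combine this with the analogous statement for other pairs of slices. Alternatively, we exploit that $-c$ is simultaneously $\ev_{N'}$ of a small sum and of a difference, in order to force $y_i$ to contain a translate of $\sigma(C_{p_k})$ minus at most one point, which has weight at least $p_2-1$. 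If this bookkeeping becomes unwieldy, we would fall back on Lam--Leung's original argument \cite[Theorem 4.1]{lam2000vanishing}, which carries out exactly this slice-counting. In that argument the factor $(p_2-1)$ comes from the fact that a nonzero element of $\ev_{N'}(\N[\Z/N'\Z])$ that equals $-\ev_{N'}$ of a nonnegative element cannot be produced with fewer than $p_2-1$ roots of unity.
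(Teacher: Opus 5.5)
The paper does not prove this statement; it quotes it from Lam--Leung. So I am judging your argument on its own terms.

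\textbf{What works.} Your setup, the prime base case and the case $c=0$ are fine.

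\textbf{The gap.} In the case $c\neq 0$ you reduce to the claim that every empty slice satisfies $\nu(y_i)\ge p_2-1$, and that claim is false. Write $N=pq$ with $p=p_1$, $q=p_2$, and $G\cong V\times W$ with $V=\Z/p\Z$, $W=\Z/q\Z$, so that $\ev_N([(v,w)])=\zeta_p^v\zeta_q^w$.
\begin{enumerate}
\item The claim drops the alternative $y\ge x$. For $x=[(0,0)]$ and $y=x+\sigma(V\times\{0\})$ one has $c=-1\neq 0$, yet $\nu(y_i)=1$ on every empty slice.
\item The claim fails even when $y\not\ge x$. For $p\ge 3$ take $x=[(0,0)]+[(0,1)]$ and $y=\sigma\bigl((V\setminus\{0\})\times(W\setminus\{0,1\})\bigr)$. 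Then $\ev_N(y)=(-1)(-1-\zeta_q)=\ev_N(x)$, $c=1+\zeta_q\neq0$ and $y\not\ge x$. Each of the $p-1$ empty slices has weight $q-2<p_2-1$.
\end{enumerate}
In the second example the theorem holds only because there are $p-1$ empty slices rather than the guaranteed $p-\nu(x)$, and $(p-1)(q-2)\ge(p-2)(q-1)$. So a correct count must trade the number of empty slices against the weight of a nonempty slice of $x$; no per-slice bound can do this.

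Two further points fail. Your closing heuristic is false as stated: $\zeta_q=-\ev_q(\sigma(W\setminus\{1\}))$ is a single root of unity. Also, applying the hypothesis to $(x_j,\,y_j+y_i)$ only yields $y_j+y_i\ge x_j$, which gives no information.

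\textbf{A repair.} Keep your slicing along $p_1$ but drop the split on $c$.
\begin{enumerate}
\item Assume $y\not\ge x$. Choose $j$ with $y_j\not\ge x_j$, and choose an empty slice $i$ minimizing $a:=\nu(y_i)$.
\item Put $b:=\nu(x_j)$ and let $k\ge1$ be the number of nonempty slices of $x$. Then there are $p_1-k$ empty slices and $b\le\nu(x)-k+1$.
\item The slice relations give $\ev_{N'}(x_j+y_i)=\ev_{N'}(y_j)$.
\item If $a+b\le p_2-1$, apply the induction hypothesis (the prime case when $s=2$) to the pair $(x_j+y_i,\,y_j)$. The alternative $y_j\ge x_j+y_i$ is excluded, since it would give $y_j\ge x_j$. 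Hence $s\ge 3$ and $\nu(y_j)\ge(p_2-a-b)(p_3-1)\ge(p_2-a-b)(p_1-k)$.
\item If instead $a+b\ge p_2$, then $a\ge p_2-b$ directly.
\end{enumerate}
In either case
\[
\nu(y)\ \ge\ (p_1-k)\,a+\nu(y_j)\ \ge\ (p_1-k)(p_2-b)\ \ge\ (p_1-k)\bigl(p_2-\nu(x)-1+k\bigr).
\]
The right-hand side is concave in $k\in[1,\nu(x)]$, so its minimum is at an endpoint. At $k=\nu(x)$ it equals $(p_1-\nu(x))(p_2-1)$. At $k=1$ it exceeds this by $(\nu(x)-1)(p_2-p_1)\ge0$. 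This gives the stated bound.
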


\subsection{Rational cosine sums and W{\l}odarski's theorem}\label{sec:Wlo} Given
angles $\alpha_1,\dots,\alpha_m\in\Q\pi$, the \emph{vanishing problem} asks
when $\sum\limits_{j=1}^m\cos\alpha_j=0$, while the \emph{multiplicity problem} asks
how many multisets of rational angles can give the same fixed nonzero value.

Already in the four-term case, vanishing sums of rational cosines are completely
classified by W{\l}odarski \cite{wlodarski1969cos}.  As an application of vanishing sums of
roots of unity, we give an alternative proof of W{\l}odarski's theorem, based on
the low-weight classification of Poonen--Rubinstein
\cite{poonen1998number}.  We first recall the relevant notation and the
Poonen--Rubinstein classification result.

\begin{notation}
 If $S,T_1,\dots,T_j$ are
vanishing sums, then $(S:T_1,\dots,T_j)$ denotes the vanishing sum obtained by
rotating each $T_i$ so that it shares exactly one distinct root of unity with $S$,
subtracting these $T_i$ from $S$, and absorbing the minus signs into the roots.
Thus $(R_5:R_3)$ is a pentagon sum with one triangle sum subtracted,
$(R_5:3R_3)$ is a pentagon sum with three such subtractions, and
$(R_7:R_3)$ is a heptagon sum with one such subtraction. Finally, $mT$ denotes a
sum of $m$ minimal vanishing sums of the form $T$, with the $m$ summands allowed
to be rotated independently.
\end{notation}

\begin{proposition}[\cite{poonen1998number}]\label{prop:PR}
Let $S$ be a vanishing sum of $8$ roots of unity which is stable under complex
conjugation. Then $S$ can be written in one of the following six forms:
\[
4R_2,\qquad 2R_3+R_2,\qquad R_5+R_3,\qquad (R_5:R_3)+R_2,\qquad
(R_5:3R_3),\qquad (R_7:R_3).
\]
Moreover, the decomposition may be chosen so that each minimal summand is either
itself stable under complex conjugation or occurs together with its complex conjugate.
Finally, if a minimal summand has the form $(R_p:T_1,\dots,T_j)$ with $p\ge 5$,
then, whenever that summand is stable under complex conjugation, the underlying
rotated copy of $R_p$ is also stable under complex conjugation.
\end{proposition}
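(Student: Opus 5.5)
The plan is to reduce the statement to two ingredients: the classification of \emph{minimal} vanishing sums of small weight, and a case analysis on how a weight-$8$ sum can split into minimal pieces. Conjugation stability is only used at the end.

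\textbf{Step 1 (minimal sums of weight at most $8$).} By Mann's theorem \cite{mann1965linear}, after a rotation every minimal vanishing sum of weight $w$ involves only $N$-th roots of unity with $N$ square-free and every prime divisor of $N$ at most $w$. I would then apply Theorem~\ref{thm:LamLeung}:
\begin{itemize}
\item by part (1), if at most two primes occur, the sum is a rotated $R_p$;
\item by part (2), an asymmetric minimal sum needs at least three primes and has weight at least $(p_1-1)(p_2-1)+(p_3-1)$.
\end{itemize}
With the bound $8$, this forces $\{p_1,p_2,p_3\}\subseteq\{2,3,5,7\}$. The relation $-\zeta=\zeta^{\,\cdot}$ times $R_2$ lets us absorb the prime $2$ into signs, so the asymmetric candidates come from the pairs $(3,5)$ and $(3,7)$.

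Writing $R_p$ minus rotated copies of $R_3$, each sharing one root with $R_p$, gives a sum of weight $p+j$ for $(R_p:jR_3)$, since each subtraction removes one root and adds two. Within weight $8$ this yields exactly the following list:
\begin{itemize}
\item $R_2$, $R_3$, $R_5$, $R_7$;
\item $(R_5:R_3)$ of weight $6$;
\item $(R_5:2R_3)$ of weight $7$;
\item $(R_5:3R_3)$ and $(R_7:R_3)$ of weight $8$.
\end{itemize}
Minimality of these, and completeness of the list, I would check by the standard coset-slicing argument. Write $G=\Z/N\Z$ as $C_3\times C_{N/3}$ and apply Theorem~\ref{thm:ker ev} on each slice; this is the computation Poonen--Rubinstein carry out.

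\textbf{Step 2 (partitions of $8$).} Decompose $S$ into minimal vanishing sums; their weights partition $8$ into parts from $\{2,3,5,6,7,8\}$. The possible partitions are
\[
2+2+2+2,\qquad 3+3+2,\qquad 5+3,\qquad 6+2,\qquad 8.
\]
A part $7$ would need a part $1$, which is impossible, and $4$ is not the weight of any minimal sum. These partitions give exactly the six listed forms $4R_2$, $2R_3+R_2$, $R_5+R_3$, $(R_5:R_3)+R_2$, $(R_5:3R_3)$ and $(R_7:R_3)$.

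\textbf{Step 3 (choosing a conjugation-compatible decomposition).} I expect this to be the main obstacle, because decompositions into minimal sums are not unique. For instance, $R_2+R_2$ can be re-paired, and $2R_3+R_2$ can sometimes be rewritten as $4R_2$.

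The idea is that complex conjugation maps any minimal decomposition to another minimal decomposition of the same sum. Each summand $T$ is either equal to $\overline{T}$ or is a different summand. I would treat each form separately:
\begin{itemize}
\item For $4R_2$: the roots of $S$ are closed under conjugation, so I pair each root $\zeta$ with $-\zeta$, and then $\overline{\zeta}$ with $-\overline{\zeta}$. Every pair is then self-conjugate or occurs together with its conjugate.
\item For the forms containing a summand of weight at least $5$: that summand is forced, as the unique sub-sum of that type, so it equals its own conjugate. The complement is then conjugation-stable of weight at most $3$, and I apply the small cases directly.
\item For $2R_3+R_2$: the $R_2$ pieces are handled as above, and the two $R_3$ summands are either both self-conjugate or swapped by conjugation.
\end{itemize}

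\textbf{Step 4 (the underlying $R_p$).} In $(R_p:T_1,\dots,T_j)$ with $p\ge5$ and $j<p$, the rotated $R_p$ is uniquely determined by the sum. It is the unique coset of $C_p$ that contains at least $p-j\ge2$ of the roots, because the inserted roots $-\zeta\omega$ from the $R_3$'s lie in different cosets. Conjugation maps this coset to a coset with the same property. So when the summand is self-conjugate, uniqueness forces the underlying $R_p$ to be conjugation-stable as well.
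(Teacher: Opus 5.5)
The error is in Step~4, and it falls on $(R_5:3R_3)$, exactly the case the paper later uses for W{\l}odarski's types (v)--(vi). For context, the paper proves this statement purely by citation: Poonen--Rubinstein's Theorem~3 and Table~1 give the list, and their Lemmas~4 and~5 give the two conjugation assertions. Your Steps~1--2 are essentially the same, since you also defer completeness of the weight-$\le 8$ list to their computation. Lam--Leung alone would not give your reduction to the pairs $(3,5),(3,7)$: for the primes $\{2,3,5,7\}$ the bound in Theorem~\ref{thm:LamLeung}(2) is only $6$, so ``absorbing $2$ into signs'' is a heuristic.

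Your real departure is Steps~3--4, where you prove Lemmas~4 and~5 directly. Step~3 is correct, but its uniqueness claims need one line each:
\begin{itemize}
\item For $2R_3+R_2$: a $\mu_3$-coset contained in $S$ either equals one of the two given cosets or is disjoint from both, and it cannot fit into the two leftover roots. Hence conjugation permutes the two given cosets.
\item For $(R_5:R_3)+R_2$: a second pair $\{\eta,-\eta\}\le S$ would lie inside the minimal weight-$6$ summand.
\item For $R_5+R_3$: a $\mu_3$-coset meets a $\mu_5$-coset in at most one point.
\end{itemize}

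In Step~4 you claim the inserted roots lie in different $\mu_p$-cosets, so the underlying $R_p$ is the unique coset containing at least $p-j$ roots. This is false for $(R_5:3R_3)$. Take the pentagon $\alpha\mu_5$ with touched vertices $\alpha\zeta_5^{a_1},\alpha\zeta_5^{a_2},\alpha\zeta_5^{a_3}$, and put $\omega=\zeta_3$.
\begin{itemize}
\item The three inserted roots $-\alpha\zeta_5^{a_i}\omega$ all lie in the single coset $-\alpha\omega\mu_5$.
\item The three inserted roots $-\alpha\zeta_5^{a_i}\omega^2$ all lie in $-\alpha\omega^2\mu_5$.
\end{itemize}
So three cosets meet the sum in at least $p-j=2$ roots, with counts $2,3,3$, and the pentagon is not even the coset with the most roots. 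In the type (v) sum ($\alpha=1$, untouched vertices $\zeta_5^{\pm1}$), conjugation fixes $\mu_5$ but swaps $-\omega\mu_5$ and $-\omega^2\mu_5$. Thus ``contains at least $p-j$ roots'' does not single out the pentagon, and your uniqueness argument gives nothing in this case.

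The repair is to use exact intersection counts. Since $-\omega^{\pm1}\notin\mu_p$, the underlying coset contains exactly $p-j$ roots of the sum. For $(R_5:3R_3)$ it is the only $\mu_5$-coset meeting the sum in exactly two roots. Conjugation preserves these counts, so it fixes that coset.

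For $j=1$, i.e.\ $(R_5:R_3)$ and $(R_7:R_3)$, your threshold argument is fine as written. There $-v\omega$ and $-v\omega^2$ lie in different cosets, so every coset other than the underlying one meets the sum in at most one root.
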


\begin{proof}
 Any vanishing sum can be
written as a sum of minimal vanishing sums. The complete list of minimal vanishing sums of weight at most $12$ is given by
\cite[Theorem~3 and Table~1]{poonen1998number}; restricting to total weight $8$
gives exactly the six displayed forms. The second assertion is \cite[Lemma~4]{poonen1998number},
and the last one is \cite[Lemma~5]{poonen1998number}.
\end{proof}

\begin{theorem}[W{\l}odarski \cite{wlodarski1969cos}]\label{thm:Wlodarski}
Let $\alpha_1,\alpha_2,\alpha_3,\alpha_4\in[0,\pi]\cap\mathbb Q\pi$ satisfy
\[
\cos\alpha_1+\cos\alpha_2+\cos\alpha_3+\cos\alpha_4=0.
\]
Then the multiset $\{\alpha_1,\alpha_2,\alpha_3,\alpha_4\}$ is one of the following:
\begin{tasks}[label=(\roman*), label-width=2em](2)
\task* $\{\alpha,\beta,\pi-\alpha,\pi-\beta\}$ for some $\alpha,\beta\in[0,\pi]\cap\mathbb Q\pi$;
\task $\{\delta,\tfrac{2\pi}{3}-\delta,\tfrac{2\pi}{3}+\delta,\tfrac{\pi}{2}\}$ for some $0\le \delta\le \tfrac{\pi}{3}$;
\task $\{\tfrac{2\pi}{5},\tfrac{4\pi}{5},\tfrac{\pi}{2},\tfrac{\pi}{3}\}$ \;or\; $\{\tfrac{\pi}{5},\tfrac{3\pi}{5},\tfrac{\pi}{2},\tfrac{2\pi}{3}\}$;
\task $\{\tfrac{\pi}{5},\tfrac{3\pi}{5},\tfrac{\pi}{3},\pi\}$ \;or\; $\{\tfrac{4\pi}{5},\tfrac{2\pi}{5},\tfrac{2\pi}{3},0\}$;
\task $\{\tfrac{2\pi}{5},\tfrac{7\pi}{15},\tfrac{13\pi}{15},\tfrac{\pi}{3}\}$ \;or\; $\{\tfrac{3\pi}{5},\tfrac{8\pi}{15},\tfrac{2\pi}{15},\tfrac{2\pi}{3}\}$;
\task $\{\tfrac{\pi}{15},\tfrac{11\pi}{15},\tfrac{4\pi}{5},\tfrac{\pi}{3}\}$ \;or\; $\{\tfrac{14\pi}{15},\tfrac{4\pi}{15},\tfrac{\pi}{5},\tfrac{2\pi}{3}\}$;
\task $\{\tfrac{2\pi}{7},\tfrac{4\pi}{7},\tfrac{6\pi}{7},\tfrac{\pi}{3}\}$ \;or\; $\{\tfrac{5\pi}{7},\tfrac{3\pi}{7},\tfrac{\pi}{7},\tfrac{2\pi}{3}\}$.
\end{tasks}
\end{theorem}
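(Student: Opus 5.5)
The plan is to reduce W{\l}odarski's theorem to Proposition~\ref{prop:PR}. Given $\cos\alpha_1+\cdots+\cos\alpha_4=0$ with $\alpha_j\in[0,\pi]\cap\Q\pi$, we set $z_j=e^{i\alpha_j}$. These are roots of unity, and $2\cos\alpha_j=z_j+\bar z_j$. Hence
\[
  z_1+\bar z_1+z_2+\bar z_2+z_3+\bar z_3+z_4+\bar z_4=0
\]
is a vanishing sum of $8$ roots of unity that is stable under complex conjugation. Note that $z_j=\bar z_j$ exactly when $\alpha_j\in\{0,\pi\}$; in that case the term $\pm1$ occurs twice. Proposition~\ref{prop:PR} says this sum has one of the six shapes
\[
  4R_2,\quad 2R_3+R_2,\quad R_5+R_3,\quad (R_5:R_3)+R_2,\quad (R_5:3R_3),\quad (R_7:R_3),
\]
with a decomposition in which each minimal summand is either conjugation-stable or paired with its conjugate. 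For each shape we read off the multiset $\{\alpha_j\}$, using that the angles lie in $[0,\pi]$, so that each conjugate pair $\{z,\bar z\}$ gives back exactly one angle.

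The cases run as follows.
\begin{itemize}
\item \textbf{Shape $4R_2$.} The eight terms split into antipodal pairs $\{\zeta,-\zeta\}$. If an antipodal pair is itself a conjugate pair, then $\zeta=\pm i$ and the angle is $\pi/2$. Otherwise the summands are paired with their conjugates, which gives $\cos\alpha=-\cos\beta$, i.e.\ $\beta=\pi-\alpha$. Either way we land in family (i), possibly with $\alpha=\pi/2$.
\item \textbf{Shape $2R_3+R_2$.} The $R_2$ has weight $2$ and must be conjugation-stable, so it is $\{i,-i\}$ (angle $\pi/2$) or $\{1,-1\}$. The two $R_3$'s are either conjugates of each other, which gives a rotated triangle $\{e^{i\delta},e^{i(\delta+2\pi/3)},e^{i(\delta-2\pi/3)}\}$ and its conjugate and hence family (ii), or both are self-conjugate. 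A self-conjugate triangle is $\{1,\omega,\bar\omega\}$ or $\{-1,-\omega,-\bar\omega\}$. Those sub-cases reduce to (i) or (ii) with $\delta=0$ or endpoint values; this needs bookkeeping of the doubled $\pm1$ terms.
\item \textbf{Shape $R_5+R_3$.} The weights are odd, so each summand is self-conjugate: the pentagon is $\pm R_5$ and the triangle is $\pm R_3$. Counting real terms forces the two real points $\pm1$ to combine into one doubled angle $0$ or $\pi$. This gives family (iv).
\item \textbf{Shape $(R_5:R_3)+R_2$.} Here $R_2=\{\pm i\}$ or $\{\pm 1\}$. By the last clause of Proposition~\ref{prop:PR} the pentagon is $\pm R_5$, and the triangle is subtracted at a real vertex (it must be, for conjugation stability). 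This yields (iii).
\item \textbf{Shape $(R_5:3R_3)$.} One triangle is attached at the real vertex and two at a conjugate pair of vertices, giving the $15$-th root configurations (v) and (vi) according to sign and attachment choices.
\item \textbf{Shape $(R_7:R_3)$.} The triangle is replaced at the real vertex of $\pm R_7$, giving (vii).
\end{itemize}

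The main obstacle is the bookkeeping, not any conceptual step. First, one must track real terms $\pm1$, which appear twice in the doubled sum while the decomposition may split them across different minimal summands. Second, in $(R_5:3R_3)$ one must enumerate which pentagon vertices are replaced, up to conjugation and the global sign $z\mapsto -z$ (which corresponds to $\alpha\mapsto\pi-\alpha$ and explains why each exceptional family comes in a pair). I would handle this by fixing the sign normalization first, then listing the attachment vertices: one real, plus a symmetric pair $\{\zeta_5^{j},\zeta_5^{-j}\}$ with $j=1$ or $2$. The remaining work is to compute the resulting angles explicitly and check they match (v) and (vi).
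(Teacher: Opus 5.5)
Your proposal is correct and follows essentially the same route as the paper: form the conjugation-stable weight-$8$ sum $\sum_j(z_j+\bar z_j)$, invoke Proposition~\ref{prop:PR}, and read off the angles shape by shape, with the same reasoning for the $(R_5:3R_3)$ case. Your handling of $2R_3+R_2$ is slightly more complete than the paper's, since you also treat two self-conjugate triangles (e.g.\ $R_3$ together with $-R_3$). One small slip in $4R_2$: a conjugation-stable $R_2$ can also be $\{1,-1\}$, but because real terms occur with even multiplicity these come in pairs giving the angles $\{0,\pi\}$, so the conclusion (i) still holds.
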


\begin{proof}
Set $S:=\sum\limits_{j=1}^4(e^{i\alpha_j}+e^{-i\alpha_j})=0$. By Proposition \ref{prop:PR},
$S$ has one of the six forms listed there.

If an $R_2$-summand is $\{u,-u\}$ with $u\neq \pm i$, then two of the angles are
complementary; hence $4R_2$ gives case \textup{(i)}, and outside case \textup{(i)}
every $R_2$ contributes the angle $\pi/2$.

If $S$ has form $2R_3+R_2$, then the two cubic summands are $uR_3$ and
$\bar uR_3$ by Proposition \ref{prop:PR}; writing $u=e^{i\delta}$ with $0\le \delta\le \pi/3$, we obtain
\[
\left\{\delta,\frac{2\pi}{3}-\delta,\frac{2\pi}{3}+\delta,\frac{\pi}{2}\right\},
\]
namely case \textup{(ii)}.

If $S$ has form $R_5+R_3$, then both summands are individually stable under
complex conjugation by Proposition \ref{prop:PR}, so up to sign they are
\[
\{1,\zeta_5^{\pm1},\zeta_5^{\pm2}\},\qquad \{1,\zeta_3,\zeta_3^2\},
\]
with the same sign. This gives exactly the two multisets in case \textup{(iv)}.

If $S$ has form $(R_5:R_3)+R_2$, then the unique weight-$6$ sum of the form
$(R_5:R_3)$ is, up to sign and rotation,
\[
\zeta_6+\zeta_6^{-1}+\zeta_5+\zeta_5^2+\zeta_5^3+\zeta_5^4=0,
\]
so adjoining $\pi/2$ gives exactly the two multisets in case \textup{(iii)}.

If $S$ has form $(R_7:R_3)$, then Proposition \ref{prop:PR} implies that the underlying
heptagon is stable under complex conjugation; hence, up to sign and rotation,
\[
-\zeta_3-\zeta_3^2+\zeta_7+\zeta_7^2+\zeta_7^3+\zeta_7^4+\zeta_7^5+\zeta_7^6=0,
\]
which yields exactly the two multisets in case \textup{(vii)}.

Finally, if $S$ has form $(R_5:3R_3)$, then Proposition \ref{prop:PR} implies that the
underlying pentagon is stable under complex conjugation. Hence, after sign and
rotation, the two untouched vertices form a conjugation-stable $2$-subset of
$\mu_5$, so up to rotation they are $\{\zeta_5^{\pm1}\}$ or $\{\zeta_5^{\pm2}\}$. These
yield respectively
\[
\left\{\frac{2\pi}{5},\frac{7\pi}{15},\frac{13\pi}{15},\frac{\pi}{3}\right\},
\qquad
\left\{\frac{\pi}{15},\frac{11\pi}{15},\frac{4\pi}{5},\frac{\pi}{3}\right\},
\]
and multiplying the whole sum by $-1$ gives the companion multisets in
cases \textup{(v)} and \textup{(vi)}.
\end{proof}

\begin{remark}
    W{\l}odarski's original proof is a direct and remarkably ingenious analysis of the
trigonometric equation. The proof above gives a different explanation from the
roots-of-unity viewpoint: it rewrites the identity as a
conjugation-stable vanishing sum of eight roots of unity and  applies the
Poonen--Rubinstein low-weight classification.
\end{remark}

As a consequence, we obtain a sharp bound for the two-term multiplicity problem
for rational cosine sums.

\begin{proposition}\label{prop:three-reps}
Fix $c\in\mathbb R\setminus\{0\}$ and define
\[
\mathcal R(c)
:=\Big\{\{\alpha,\beta\}\subset[0,\pi]\cap\mathbb Q\pi:\ \cos\alpha+\cos\beta=c\Big\}
\]where $\{\alpha,\beta\}$ is a multiset (allow $\alpha=\beta$).
Then $|\mathcal R(c)|\le 3$ and at most one pair in
$\mathcal R(c)$ has equal reduced denominators. Moreover,  $|\mathcal R(c)|=3$ if and only if 
$c\in\{\pm\frac12,\ \pm\cos\frac{\pi}{5},\ \pm\cos\frac{2\pi}{5}\}$. In each case $\mathcal R(c)$ is \emph{exactly} the following triple of pairs:
\begin{align*}
\mathcal R\Big(\frac12\Big)
&=\Big\{\Big\{0,\frac{2\pi}{3}\Big\},\ \Big\{\frac{\pi}{5},\frac{3\pi}{5}\Big\},\ 
\Big\{\frac{\pi}{2},\frac{\pi}{3}\Big\}\Big\},\\[2mm]
\mathcal R\Big(-\frac12\Big)
&=\Big\{\Big\{\pi,\frac{\pi}{3}\Big\},\ \Big\{\frac{2\pi}{5},\frac{4\pi}{5}\Big\},\ 
\Big\{\frac{\pi}{2},\frac{2\pi}{3}\Big\}\Big\},\\[2mm]
\mathcal R\Big(\cos\frac{\pi}{5}\Big)
&=\Big\{\Big\{\frac{\pi}{5},\frac{\pi}{2}\Big\},\ \Big\{\frac{2\pi}{5},\frac{\pi}{3}\Big\},\ 
\Big\{\frac{2\pi}{15},\frac{8\pi}{15}\Big\}\Big\},\\[2mm]
\mathcal R\Big(\cos\frac{2\pi}{5}\Big)
&=\Big\{\Big\{\frac{2\pi}{5},\frac{\pi}{2}\Big\},\ \Big\{\frac{\pi}{5},\frac{2\pi}{3}\Big\},\ 
\Big\{\frac{\pi}{15},\frac{11\pi}{15}\Big\}\Big\},\\[2mm]
\mathcal R\Big(-\cos\frac{\pi}{5}\Big)
&=\Big\{\Big\{\frac{4\pi}{5},\frac{\pi}{2}\Big\},\ \Big\{\frac{3\pi}{5},\frac{2\pi}{3}\Big\},\ 
\Big\{\frac{7\pi}{15},\frac{13\pi}{15}\Big\}\Big\},\\[2mm]
\mathcal R\Big(-\cos\frac{2\pi}{5}\Big)
&=\Big\{\Big\{\frac{3\pi}{5},\frac{\pi}{2}\Big\},\ \Big\{\frac{4\pi}{5},\frac{\pi}{3}\Big\},\ 
\Big\{\frac{4\pi}{15},\frac{14\pi}{15}\Big\}\Big\}.
\end{align*}
\end{proposition}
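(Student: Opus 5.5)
The key observation is that two distinct representations of $c$ yield a four-cosine vanishing sum. Suppose $\{\alpha,\beta\}\neq\{\gamma,\delta\}$ both lie in $\mathcal R(c)$. Then
\[
\cos\alpha+\cos\beta+\cos(\pi-\gamma)+\cos(\pi-\delta)=0,
\]
with all four angles in $[0,\pi]\cap\Q\pi$. So the multiset $\{\alpha,\beta,\pi-\gamma,\pi-\delta\}$ is one of the configurations (i)--(vii) of Theorem~\ref{thm:Wlodarski}. The plan is to determine, for each configuration, every way of splitting it into a pair $\{\alpha,\beta\}$ and a pair $\{\pi-\gamma,\pi-\delta\}$ that produces two genuinely different representations with $c\ne 0$. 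This yields a finite list of ``transitions'' between representations. We then bound how many representations can be pairwise connected by transitions.

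\textbf{Case (i).} The multiset is $\{a,b,\pi-a,\pi-b\}$. One splitting is $\{\alpha,\beta\}=\{a,b\}$, which gives $\{\gamma,\delta\}=\{a,b\}$, the same representation. The other splitting is $\{\alpha,\beta\}=\{a,\pi-a\}$, which gives $c=0$ and is excluded. Hence, unless a coincidence such as $b=\pi-a$ occurs (which again forces $c=0$ or identical pairs), case (i) never relates two distinct representations with $c\neq0$.

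\textbf{Cases (ii)--(vii).} In each of these cases one checks directly which of the three splittings give $c\ne0$ and distinct pairs. Case (ii) is the one-parameter family $\{\delta,\tfrac{2\pi}{3}-\delta,\tfrac{2\pi}{3}+\delta,\tfrac\pi2\}$. Using $\cos\delta+\cos(\tfrac{2\pi}{3}\pm\delta)$-type identities, it gives pairs such as $\{\tfrac\pi2,\delta\}$ versus $\{\tfrac\pi3-\delta,\tfrac\pi3+\delta\}$, both with value $\cos\delta$. Thus every $c=\cos\delta$ with $\delta\in\Q\pi$ has a ``generic'' second representation. For this reason the family (ii) can produce two representations for infinitely many $c$, and a third must come from the sporadic cases (iii)--(vii). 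The sporadic cases produce only finitely many values of $c$. Listing them gives exactly $\pm\frac12$, $\pm\cos\frac\pi5$, $\pm\cos\frac{2\pi}5$ (and possibly a few others, such as $\cos\frac{\pi}{7}$-type values from (vii), which must be checked).

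The counting argument runs as follows. Take three distinct representations. Every pair of them is related by some case of the theorem. At most one of the three relations can be of generic type (ii) for a given $c$. The reason is that $c=\cos\delta$ determines $\delta\in[0,\pi]$ when $c\in[-1,1]$, so the type-(ii) partner of a representation is forced. Hence at least two relations are sporadic, and this already restricts $c$ to the finite list.

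For each candidate $c$ we then compute $\mathcal R(c)$ exhaustively from the transitions. We verify that the values coming from (vii) have only two representations, while the six listed values have exactly the stated triples. For four representations, all six pairs would need relations, and the explicit triples show this is impossible. The claim about equal reduced denominators is read off from the explicit lists and the type-(ii) transitions: in $\{\tfrac\pi2,\delta\}$ versus $\{\tfrac\pi3\mp\delta\}$, only one side can have equal denominators.

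The main obstacle is the bookkeeping. Each type-(ii) transition must be pinned down precisely, including degenerate $\delta$ such as $0$, $\tfrac\pi6$, $\tfrac\pi3$, where the family collides with other configurations or with $c$ outside $[-1,1]$. Two further checks are needed: the uniqueness of the generic partner for $c\in(1,2]$ (where $c=2\cos\theta$ for representations $\{\theta,\theta\}$), and that no sporadic value other than the six arises. I would handle this by writing representations as $\{\alpha,\beta\}$ with $\cos\alpha+\cos\beta=2\cos\frac{\alpha+\beta}2\cos\frac{\alpha-\beta}2$, and checking each case of Theorem~\ref{thm:Wlodarski} by direct tabulation.
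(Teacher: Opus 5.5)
Your reduction and counting match the paper's. Two representations give a W\l odarski quadruple $\{\alpha,\beta,\pi-\gamma,\pi-\delta\}$, and type (i) only relates a pair to itself. Each representation has at most one type-(ii) partner, so among three representations at least two relations are sporadic; this is exactly the paper's equality-case argument. To justify the type-(ii) step properly, note that in \emph{every} splitting of a type-(ii) quadruple one of the two representations is $\{\frac{\pi}{2},\theta\}$ with $\cos\theta=c$, because $\pi-\frac{\pi}{2}=\frac{\pi}{2}$. You exhibited only one splitting. This also shows there is no type-(ii) relation at all when $|c|>1$, so your separate check for $c\in(1,2]$ is vacuous.

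The finish is where you go wrong. The sporadic quadruples (iii)--(vii) produce twelve $\pm$-pairs of values of $c$, not essentially the six in the statement. For example, (iv) gives $c=\frac12+\cos\frac{\pi}{5}$ via $\{\frac{\pi}{5},\frac{\pi}{3}\},\{0,\frac{2\pi}{5}\}$, and $c=\cos\frac{\pi}{5}-1$ via $\{\frac{\pi}{5},\pi\},\{\frac{2\pi}{5},\frac{2\pi}{3}\}$. Quadruples (v) and (vi) each give two more $\pm$-pairs, and (vii) gives three. So your proposed check that ``no sporadic value other than the six arises'' is false. What is needed is that each of these eighteen extra values has only two representations, and your verification covers only those from (vii).

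Tabulating all of them would work, but the paper avoids it. Two sporadic relations issuing from the same $P_i$ put $P_i$ in two distinct sporadic quadruples. Only six pairs lie in two sporadic quadruples (e.g.\ $\{\frac{2\pi}{5},\frac{4\pi}{5}\}$ in (iii) and (iv)), and this fixes $c$ and the triple at once. Likewise, the paper gets $|\mathcal R(c)|\le 3$ from a local count rather than by computing $\mathcal R(c)$ for each sporadic $c$: a pair lies in at most one type-(ii) quadruple and at most two sporadic ones, and never in both kinds.
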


\begin{proof}
Fix $\{\alpha,\beta\}\in\mathcal R(c)$.  Any other $\{\gamma,\delta\}\in\mathcal R(c)$ satisfies
\[
\cos\alpha+\cos\beta=\cos\gamma+\cos\delta
\quad\Longleftrightarrow\quad
\cos\alpha+\cos\beta+\cos(\pi-\gamma)+\cos(\pi-\delta)=0.
\]
Hence the multiset $Q(\gamma,\delta):=\{\alpha,\beta,\pi-\gamma,\pi-\delta\}$ is a rational $4$-cosine vanishing sum.  By Theorem~\ref{thm:Wlodarski}, it must be of one
of the types \textup{(i)}--\textup{(vii)}.

If $Q(\gamma,\delta)$ is of type \textup{(i)}, since $c=\cos \alpha+\cos \beta\ne 0$, we have $\alpha\ne \pi-\beta$. Hence 
$\{\pi-\gamma,\pi-\delta\}=\{\pi-\alpha,\pi-\beta\}$ and thus $\{\gamma,\delta\}=\{\alpha,\beta\}$;
this is the trivial representation.

In the nontrivial cases \textup{(ii)}--\textup{(vii)}, the multiset $Q(\gamma,\delta)$ is one of
a one-parameter family \textup{(ii)} or one of finitely many sporadic quadruples
\textup{(iii)}--\textup{(vii)}.
For a fixed pair $\{\alpha,\beta\}$:
\begin{itemize}
\item it can occur in \emph{at most one} quadruple of type \textup{(ii)} since the parameter $\delta$ is
forced once two angles of the multiset are specified.
\item by direct inspection of sporadic quadruples \textup{(iii)}--\textup{(vii)}, a fixed 2-element
subset of angles can be contained in \emph{at most two} sporadic quadruples.
Moreover, the only instances where two distinct sporadic quadruples share \emph{two} angles are:
\[
\Big\{\frac{2\pi}{5},\frac{4\pi}{5}\Big\},\ \Big\{\frac{2\pi}{5},\frac{\pi}{3}\Big\},\ 
\Big\{\frac{4\pi}{5},\frac{\pi}{3}\Big\},\ 
\Big\{\frac{\pi}{5},\frac{3\pi}{5}\Big\},\ 
\Big\{\frac{3\pi}{5},\frac{2\pi}{3}\Big\},\ 
\Big\{\frac{\pi}{5},\frac{2\pi}{3}\Big\}.
\]

\item if $\{\alpha,\beta\}$ lies in two sporadic quadruples, it lies in no type (ii) quadruple.
\end{itemize}
Therefore $\{\alpha,\beta\}$ can be extended to at most two distinct nontrivial W{\l}odarski quadruples,
and each such extension determines at most one distinct new pair $\{\gamma,\delta\}$.
Hence there are at most two nontrivial representations besides $\{\alpha,\beta\}$, proving
$|\mathcal R(c)|\le 3$.  Moreover, inspection
of Włodarski’s list shows that for fixed $c\neq0$, at most one pair in
$\mathcal R(c)$ has equal reduced denominators.

For the equality case, assume $|\mathcal R(c)|=3$. Write $\mathcal R(c)=\{P_1,P_2,P_3\}$ and $P_i=\{\alpha_i,\beta_i\}$. For $i\neq j$, let $Q_{ij}:=\{\alpha_i,\beta_i,\pi-\alpha_j,\pi-\beta_j\}$. Call the relation between $P_i$ and $P_j$ \emph{sporadic} or \emph{type \textup{(ii)}} according as
$Q_{ij}$ is sporadic or type \textup{(ii)}. This is a symmetric relation. Now fix $i$. The two equalities between $P_i$ and the other two elements of $\mathcal R(c)$ give two
nontrivial Włodarski quadruples containing $P_i$. By the first bullet above, a fixed
pair can lie in at most one type \textup{(ii)} quadruple. Hence at least one of the two relations
from $P_i$ is sporadic. Thus each of $P_1,P_2,P_3$ is incident to at least one sporadic relation. Therefore there are at
least two sporadic relations among the three pairs $\{P_1,P_2\},\{P_1,P_3\},\{P_2,P_3\}$. Hence some $P_i$ lies in two distinct sporadic quadruples. So some $P_i$ must to be one of the six shared sporadic pairs listed in the second bullet above. For each of these six possibilities, the two sporadic quadruples containing it are unique; reading
off the complementary pairs gives the other two elements of $\mathcal R(c)$, hence exactly the six
triples listed in the statement.
\end{proof}

We now introduce the structured cosine sums that will be the main object of the paper.

\begin{definition}\label{def:C_S(k)}
    Fix an integer $N\ge 1$, and let $S=\{s_1,\dots,s_t\}$ be a multiset in $\Z/N\Z$.
For each $k\in\Z/N\Z$, we define the structured  cosine sums
\[
C_S(k)=\sum_{j=1}^t \cos\!\Big(\frac{2\pi ks_j}{N}\Big).
\]
\end{definition}

Using the group-ring notation from the previous subsection and identifying a multiset
with its incidence element, we have
   $2C_S(k)=\ev_N(\sym(kS))$. Thus vanishing and equality questions for $C_S(k)$ become vanishing and
equality questions for cyclotomic evaluations with nonnegative coefficients. The two basic questions are the same as in the  rational cosine sums:
\emph{(vanishing)} for which $k$ does $C_S(k)=0$, and \emph{(multiplicity)} how large can a fiber $\{k:C_S(k)=c\}$ be for a fixed $c$?

\begin{remark}
    After clearing denominators, any finite sum of cosines with
angles in $\Q\pi$ can be rewritten in the form $C_S(1)$ for a suitable modulus $N$.
\end{remark}

\section{Symmetric vanishing multisets and zeros of structured cosine sums}\label{sec:zero}

We now investigate the vanishing problem for structured cosine sums. 

\begin{definition}\label{def:blocks}
Let $p\mid N$ be prime and let $u\in G$.
Define
\[
A\!\Big(u;\frac{N}{p}\Big):=\sym(u+C_p)=\sigma(u+C_p)+\sigma(-u+C_p)\in \N[G],
\]
where $C_p\le G$ is the unique subgroup of order $p$.
Equivalently, as a multiset, 
\[
A\!\Big(u;\frac{N}{p}\Big)=\Big\{\pm\big(u+j\frac{N}{p}\big):j=0,1,\dots,p-1\Big\}.
\]
If $N$ has two distinct odd primes $p\ne q$, define
\[
D_{p,q}(N):=\sigma(C_p)+\sigma(C_q)\in \N[G].
\]
As a multiset,
\[
D_{p,q}(N)=\Big\{j\frac{N}{p}:j=0,\dots,p-1\Big\}
+
\Big\{j\frac{N}{q}:j=0,\dots,q-1\Big\}.
\]
As throughout, these may be read either as multisets or as elements of $\N[G]$.
\end{definition}
Note that $\ev_N\!\Big(A\!\big(u;\frac{N}{p}\big)\Big)=\zN^uR_p+\zN^{-u}R_p=0$ and $\ev_N\big(D_{p,q}(N)\big)=R_p+R_q=0$. They are typical vanishing sums of cosine functions.

In the case $N=p^aq^b$ with distinct primes, symmetric vanishing multisets admit an explicit decomposition.

\begin{theorem}\label{thm:symmetric-vanish-paqb}
Let $N=p^aq^b$ with distinct odd primes $p<q$, and let $X\in\N[G]$.
Then the following are equivalent.
\begin{enumerate}[label=(\arabic*)]
\item $X=\sym(T)$ for some $T\in \N[G]$ and $\ev_N(X)=0$ (we call such $X$ a \emph{symmetric vanishing multiset}).

\item $X$ is a finite sum of blocks of the three types
\[
A\!\Big(u;\frac{N}{p}\Big),\qquad A\!\Big(v;\frac{N}{q}\Big),\qquad D_{p,q}(N),
\]
for some suitable $u,v\in G$.
\end{enumerate}
\end{theorem}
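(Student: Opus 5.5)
The implication (2)$\Rightarrow$(1) is a direct check. Each block $A(u;N/p)=\sym(\sigma(u+C_p))$ is a symmetrization by definition, and it evaluates to $\zN^uR_p+\zN^{-u}R_p=0$. The block $D_{p,q}(N)$ is symmetric because $C_p$ and $C_q$ are subgroups. Its weight $p+q$ is even because $p,q$ are odd, so by Remark~\ref{rem:symmetrized-odd} (with $N$ odd) it equals $\sym(T)$ for some $T$, and it evaluates to $R_p+R_q=0$. Finite sums of symmetrizations are symmetrizations, which gives (1).

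For (1)$\Rightarrow$(2) I will first obtain a canonical coset decomposition. Write $X$ as a sum of minimal vanishing sums. By Theorem~\ref{thm:LamLeung}(1), each minimal summand is a rotation of $R_p$ or $R_q$, i.e. a coset $x+C_p$ or $y+C_q$. Hence
\[
X=\sum_{x+C_p}a(x+C_p)\,\sigma(x+C_p)+\sum_{y+C_q}b(y+C_q)\,\sigma(y+C_q),\qquad a,b\ge 0 .
\]
This representation is not unique, because for each coset $z+C_{pq}$ of the subgroup $C_{pq}=C_p+C_q$ we have
\[
\sigma(z+C_{pq})=\sum_{x+C_p\subseteq z+C_{pq}}\sigma(x+C_p)=\sum_{y+C_q\subseteq z+C_{pq}}\sigma(y+C_q).
\]
I normalize by moving full copies of $\sigma(z+C_{pq})$ from the $C_p$-side to the $C_q$-side. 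This keeps all coefficients nonnegative, and afterwards, for every $C_{pq}$-coset, the minimum of $a$ over the $q$ cosets of $C_p$ inside it is $0$.

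The key step is to show this normal form is unique. Suppose two normal forms $(a,b)$ and $(a',b')$ give the same $X$. Then
\[
\sum (a-a')\,\sigma(x+C_p)=\sum (b'-b)\,\sigma(y+C_q).
\]
The left side, viewed as a function on $G$, is constant on $C_p$-cosets, and the right side is constant on $C_q$-cosets. Hence the common value is constant on $C_{pq}$-cosets. It follows that $a-a'$ is constant on the $C_p$-cosets inside each $C_{pq}$-coset. Both $a$ and $a'$ have minimum $0$ there, so the constant is $0$, giving $a=a'$ and then $b=b'$.

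Uniqueness gives symmetry of the coefficients. Negation permutes $C_p$-, $C_q$- and $C_{pq}$-cosets, so it carries a normal form of $X$ to a normal form of $X^*=X$. By uniqueness, $a(-x+C_p)=a(x+C_p)$ and $b(-y+C_q)=b(y+C_q)$.

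Since $N$ is odd, $x+C_p=-x+C_p$ forces $2x\in C_p$, hence $x\in C_p$. So $C_p$ is the only self-conjugate $C_p$-coset, and likewise $C_q$ is the only self-conjugate $C_q$-coset. All other cosets therefore occur in conjugate pairs with equal multiplicity, and these pairs assemble into blocks $A(x;N/p)$ and $A(y;N/q)$. What remains is $\alpha\,\sigma(C_p)+\beta\,\sigma(C_q)$ with $\alpha=a(C_p)$ and $\beta=b(C_q)$.

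The remaining part is handled by a parity argument. By Remark~\ref{rem:symmetrized-odd}, $\eps(X)$ is even. The paired blocks have even weight, so $\alpha p+\beta q$ is even, and since $p,q$ are odd this gives $\alpha\equiv\beta \pmod 2$. Take $\min(\alpha,\beta)$ copies of $D_{p,q}(N)$. The leftover is an even multiple of $\sigma(C_p)$ or of $\sigma(C_q)$, and $2\sigma(C_p)=A(0;N/p)$, $2\sigma(C_q)=A(0;N/q)$, which finishes the decomposition.

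I expect the main obstacle to be the non-uniqueness of the coset decomposition. The normalization together with the uniqueness claim (a function constant on both $C_p$- and $C_q$-cosets is constant on $C_{pq}$-cosets) is the step that needs care, including checking that the normalization can always be achieved with nonnegative coefficients.
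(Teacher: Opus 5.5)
Your proof is correct, but it takes a different route from the paper's.

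\textbf{The paper's route.} The paper proves (1)$\Rightarrow$(2) by induction on $\eps(X)$, peeling off one block at a time. It takes a minimal vanishing $Y\le X$, which by Theorem~\ref{thm:LamLeung}(1) is $\sigma(u+C_p)$ or $\sigma(v+C_q)$. If $u\notin C_p$, then $Y^*\le X^*=X$, and $Y,Y^*$ have disjoint supports because $N$ is odd. Hence $Y+Y^*=A(u;N/p)\le X$, and this block can be subtracted. Once no minimal vanishing sub-sum is a non-self-conjugate coset, what remains is $e\,\sigma(C_p)+f\,\sigma(C_q)$, and the same parity argument you give finishes.

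\textbf{Your route.} The paper only uses the multiset inequality $Y+Y^*\le X$. It never needs the conjugate coset to appear in any particular decomposition, so it never confronts the non-uniqueness coming from $\sigma(z+C_{pq})$. Your argument has to handle exactly that issue. You do so with the normal form in which $a$ has minimum $0$ on each $C_{pq}$-coset, together with the uniqueness argument that a function constant on both $C_p$- and $C_q$-cosets is constant on $C_{pq}$-cosets. Your checks of both steps are sound, including nonnegativity after normalization and the fact that negation carries normal forms to normal forms.

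\textbf{What each buys.} The paper's argument is shorter and needs no bookkeeping. Yours proves a stronger intermediate statement: a canonical coset decomposition of every element of $\ker(\ev_N)\cap\N[G]$ for $N=p^aq^b$. This decomposition is equivariant under every automorphism of $G$, that is, under multiplication by any unit, which corresponds to the Galois action. Symmetry of $X$ then transfers directly to symmetry of the coefficients, and any other invariance of $X$ under a unit would transfer the same way without redoing an induction.
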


\begin{proof}

The implication (2) $\Rightarrow$ (1) is clear. For (1) $\Rightarrow$(2), we argue by induction on $\eps(X)$. Note that by Remark \ref{rem:symmetrized-odd}, (1) is equivalent to $X$ is symmetric with $\eps(X)$ even and $\ev_N(X)=0$.
If $\eps(X)=0$, there is nothing to prove.
Assume $\eps(X)>0$.
Choose a nonzero element $Y\in\N[G]$ such that
\[
0<Y\le X,\qquad \ev_N(Y)=0,
\]
and $\eps(Y)$ is minimal among all such choices.
Then $Y$ is a minimal vanishing sum, so by Theorem~\ref{thm:LamLeung}(1) it is of the form $Y=\sigma(u+C_p)$ or $Y=\sigma(v+C_q)$ for some $u,v\in G$.

We take $Y=\sigma(u+C_p)$ for example and the similar argument works for $Y=\sigma(v+C_q)$.
If $u\notin C_p$, then $Y$ and $Y^*$ have disjoint support.
Indeed, if $x\in\operatorname{supp}(Y)\cap\operatorname{supp}(Y^*)$, then $u+j\frac{N}{p}\equiv -u+j'\frac{N}{p}\pmod N$ for some $j,j'$, so $2u\in C_p$.
Since $N$ is odd, $2$ is invertible modulo $N$, hence $u\in C_p$, a contradiction.
Because $X$ is symmetric and $Y\le X$, we also have $Y^*\le X$.
Therefore, if $u\notin C_p$, we have $A\!(u;\frac{N}{p})=Y+Y^*\le X$. Set $X':=X-A\!(u;\frac{N}{p})$. Then $X'\in\N[G]$ is still symmetric, still has even weight, and still satisfies $\ev_N(X')=0$.
By the induction hypothesis, $X'$ is a finite sum of the required blocks, hence so is $X$. 

Iterating this process for $p$ and $q$, we are reduced to the case $X=e\sigma(C_p)+f\sigma(C_q)$ for some $e,f\in\N$.
Because $\eps(X)$ is even and $p,q$ are odd, $e+f$ is even.
Therefore at least one of the following holds:
\[
e\ge 2,\qquad f\ge 2,\qquad \text{or}\qquad e\ge 1\text{ and }f\ge 1.
\]
If $e\ge 2$, then $A\!(0;\frac{N}{p})=2\sigma(C_p)\le X$. If $f\ge 2$, then $A\!(0;\frac{N}{q})=2\sigma(C_q)\le X$. If $e\ge 1$ and $f\ge 1$, then $D_{p,q}(N)=\sigma(C_p)+\sigma(C_q)\le X$. Subtracting one of these blocks produces a new element $X'\in\N[G]$ with smaller weight, still symmetric, still of even weight, and still satisfying $\ev_N(X')=0$.
The induction hypothesis now completes the proof.
\end{proof}

\begin{cor}
Let $N=p^aq^b$ with distinct odd primes $p<q$.
Then $C_S(k)=0$ if and only if $\sym(kS)$ can be written as a finite sum of blocks of the three types in Theorem~\ref{thm:symmetric-vanish-paqb}.
\end{cor}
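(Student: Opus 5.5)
The corollary follows by applying Theorem~\ref{thm:symmetric-vanish-paqb} to the element $X:=\sym(kS)\in\N[G]$. The link between the two is the bridge identity $2C_S(k)=\ev_N(\sym(kS))$ recorded after Definition~\ref{def:C_S(k)}, which makes $C_S(k)=0$ equivalent to $\ev_N(\sym(kS))=0$.

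First we check that $X$ satisfies condition (1) of the theorem. Here $kS$ denotes the multiset $\{ks_1,\dots,ks_t\}$ on $G$, so $T:=\sigma(kS)\in\N[G]$ and $X=\sym(T)$ by definition. Hence $X$ is always of the form $\sym(T)$ with $T\in\N[G]$. Condition (1) for $X$ therefore reduces to the single requirement $\ev_N(X)=0$, which by the bridge identity is exactly $C_S(k)=0$.

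For the forward direction, assume $C_S(k)=0$. Then $X$ is a symmetric vanishing multiset, and the implication (1)$\Rightarrow$(2) of Theorem~\ref{thm:symmetric-vanish-paqb} writes $X$ as a finite sum of blocks $A(u;N/p)$, $A(v;N/q)$ and $D_{p,q}(N)$.

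For the converse, assume $\sym(kS)$ is such a sum. Each block satisfies $\ev_N=0$, as noted after Definition~\ref{def:blocks}, and $\ev_N$ is additive. So $\ev_N(\sym(kS))=0$, and the bridge identity gives $C_S(k)=0$.

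There is no real obstacle. The only point needing care is that the bridge identity holds with multiplicities, because $kS$ can have repeated elements even when $S$ has none. This is harmless: both $\sym$ and $\ev_N$ are defined on $\N[G]$, and $\cos(2\pi ks_j/N)=\tfrac12(\zeta_N^{ks_j}+\zeta_N^{-ks_j})$ holds term by term.
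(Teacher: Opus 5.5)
Your proposal is correct and is the same as the paper's proof, which likewise combines the identity $2C_S(k)=\ev_N(\sym(kS))$ with Theorem~\ref{thm:symmetric-vanish-paqb} applied to $X=\sym(kS)$. Your observation that $kS$ may carry multiplicities, which is harmless because everything lives in $\N[G]$, is a sound extra bit of care.
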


\begin{proof}
It follows from $2C_S(k)=\ev_N(\sym(kS))$ and Theorem~\ref{thm:symmetric-vanish-paqb}.
\end{proof}

\begin{remark}\label{rem:0 2q}
Let $N=2^aq^b$ with $q$ an odd prime.
The same inductive argument gives the analogue of Theorem~\ref{thm:symmetric-vanish-paqb}.
Every symmetric vanishing element of $\N[G]$ is a finite sum of blocks of the forms
\[
(u+C_2)+(-u+C_2),\qquad (v+C_q)+(-v+C_q),
\]
where the two summands are distinct, together with the self-conjugate blocks
\[
C_2=\Big\{0,\frac{N}{2}\Big\},\qquad
\frac{N}{4}+C_2=\Big\{\frac{N}{4},\frac{3N}{4}\Big\}\ \ (4\mid N),
\]
\[
C_q=\Big\{j\frac{N}{q}:0\le j<q\Big\},\qquad
\frac{N}{2}+C_q=\Big\{\frac{N}{2}+j\frac{N}{q}:0\le j<q\Big\}.
\]
Indeed, a translate $u+C_2$ is fixed by involution $T\mapsto T^*$ if and only if $u+C_2=C_2$, or $4\mid N$ and $u+C_2=\frac{N}{4}+C_2$; similarly, a translate $v+C_q$ is fixed by involution if and only if $v+C_q=C_q$ or $v+C_q=\frac{N}{2}+C_q$.
\end{remark}

For a general $N$, there are minimal vanishing sums of roots of unity which are not rotations of $R_p$ (cf. Example \ref{example:vanishing sum}), and it is difficult to classify all of them for larger weight.
So we restrict to the smallest even weight relevant for structured cosine sums.

\begin{theorem}\label{thm:weight-2p1}
Let $N=p_1^{r_1}\cdots p_s^{r_s}$ with distinct primes $p_1<\cdots<p_s$.
Let $X\in\N[G]$ be of the form $X=\sym(Y)$ for some $Y\in\N[G]$, and suppose that $\eps(X)=2p_1$.
Then $\ev_N(X)=0$ if and only if $X=A(u;N/p_1)$ for some $u\in G$.
\end{theorem}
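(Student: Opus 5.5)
The plan is to decompose $X$ into minimal vanishing sums, use Theorem~\ref{thm:LamLeung} to show that all of them must be rotated copies of $R_{p_1}$, and then use the symmetry of $X$ to glue the two copies into a single block $A(u;N/p_1)$. The direction ``$\Leftarrow$'' is immediate, since $\ev_N(A(u;N/p_1))=\zeta_N^uR_{p_1}+\zeta_N^{-u}R_{p_1}=0$. For ``$\Rightarrow$'', as in the proof of Theorem~\ref{thm:symmetric-vanish-paqb}, choose $0<Y\le X$ with $\ev_N(Y)=0$ and $\eps(Y)$ minimal. Then $Y$ is a minimal vanishing sum and $X-Y\in\N[G]$ also vanishes.

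\textbf{Step 1: bound the asymmetric case.} If $s\ge 3$, an asymmetric minimal sum has weight at least
\[
(p_1-1)(p_2-1)+(p_3-1)\ \ge\ (p_1-1)p_1+(p_1+1)\ =\ p_1^2+1\ >\ 2p_1 ,
\]
using $p_2\ge p_1+1$ and $p_3\ge p_1+2$. If $s\le 2$ there are no asymmetric minimal sums at all, by Theorem~\ref{thm:LamLeung}(1) (for $s=1$ this is the standard fact). So every minimal vanishing sum of weight at most $2p_1$ is a rotation $\sigma(x+C_\ell)$ of some $R_\ell$ with $\ell\mid N$ prime. In particular its weight is a prime $\ell\ge p_1$.

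\textbf{Step 2: force two copies of $R_{p_1}$.} We cannot have $\eps(Y)=2p_1$, because $2p_1$ is not prime. Hence $p_1\le\eps(Y)<2p_1$, so $0<\eps(X-Y)\le p_1$. By Theorem~\ref{thm:LamLeung}(3) the nonzero vanishing element $X-Y$ has weight at least $p_1$, so $\eps(X-Y)=p_1$ and $\eps(Y)=p_1$. Then $X-Y$ is itself minimal, because any proper vanishing piece would have weight less than $p_1$. A minimal sum of prime weight $p_1$ must be a coset of $C_{p_1}$. Therefore
\[
X=\sigma(a+C)+\sigma(b+C),\qquad C:=C_{p_1},
\]
for some $a,b\in G$.

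\textbf{Step 3: use symmetry.} Cosets of $C$ partition $G$, so the multiset of cosets $\{a+C,b+C\}$ can be read off from the coefficients of $X$. The condition $X=X^*$ then gives $\{-a+C,-b+C\}=\{a+C,b+C\}$, which leaves two cases.
\begin{itemize}
\item If $-a+C=b+C$, then $X=A(a;N/p_1)$ and we are done.
\item Otherwise $2a\in C$ and $2b\in C$.
\begin{itemize}
\item If $p_1$ is odd, then $N$ is odd, so $a,b\in C$ and $X=2\sigma(C)=A(0;N/p_1)$.
\item If $p_1=2$, then $C=\{0,N/2\}$ and each of $a+C,b+C$ is either $C$ or $N/4+C$ (the latter when $4\mid N$). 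When $a+C=b+C$ we get $X=2\sigma(a+C)=A(a;N/2)$, since $a+C=-a+C$. When the two cosets are distinct, $X=\{0,N/2,N/4,3N/4\}$, which has coefficient $1$ at the fixed point $0$. This contradicts $X=\sym(Y)$ by Remark~\ref{rem:symmetrized-odd}.
\end{itemize}
\end{itemize}

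I expect the main obstacle to be Step 1, namely ruling out asymmetric minimal pieces. This rests on the numerical inequality above together with the case distinction $s\le 2$ versus $s\ge 3$. The parity bookkeeping for $p_1=2$ in Step 3 is the only other delicate point.
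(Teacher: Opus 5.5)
Your proof is correct and is essentially the paper's argument: decompose into minimal vanishing sums, use the bound of Theorem~\ref{thm:LamLeung}(2) to exclude asymmetric pieces, conclude $X=\sigma(a+C_{p_1})+\sigma(b+C_{p_1})$, and finish with the symmetry of $X$. The only differences are in the edge cases, where your treatment is if anything more uniform. You handle $s\le 2$ directly via Theorem~\ref{thm:LamLeung}(1), whereas the paper cites Theorem~\ref{thm:symmetric-vanish-paqb}, which literally requires two odd primes. For $p_1=2$ you use the coset analysis plus the fixed-point parity of Remark~\ref{rem:symmetrized-odd}, whereas the paper solves $\cos(2\pi a/N)+\cos(2\pi b/N)=0$ directly.
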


\begin{proof}
The sufficiency is clear. For the necessity, if $s\le 2$, it follows from Theorem \ref{thm:symmetric-vanish-paqb}.
If $s\ge 3$, then by Theorem~\ref{thm:LamLeung}(2), every asymmetric minimal vanishing summand that is not a rotation of some $R_r$ has weight at least $(p_1-1)(p_2-1)+(p_3-1)$. Since $p_2\ge 3$ and $p_3\ge 5$, we have
\[
(p_1-1)(p_2-1)+(p_3-1)\ge 2(p_1-1)+4=2p_1+2>2p_1.
\]
Hence every minimal vanishing summand of $X$ is a rotation of $R_r$, equivalently a translate $u+P_r$. Each such summand has prime weight at least $p_1$, while $\eps(X)=2p_1$.
Therefore $X=\sigma(u+C_{p_1})+\sigma(v+C_{p_1})$ for some $u,v\in G$. Assume first that $p_1\ne 2$, i.e., $N$ is odd, then the same argument as in Theorem \ref{thm:symmetric-vanish-paqb} gives the result.

It remains to consider the case $p_1=2$.
Then $\eps(X)=4$, so $\eps(Y)=2$.
Write $Y=[a]+[b]$ for some $a,b\in G$.
Then $X=[a]+[-a]+[b]+[-b]$. The condition $\ev_N(X)=0$ becomes $\cos\!(\frac{2\pi a}{N})+\cos\!(\frac{2\pi b}{N})=0$. Therefore $b\equiv \frac{N}{2}\pm a \pmod N$, and so $X=A\!(a;\frac{N}{2})$.
\end{proof}

\begin{cor}
Let $N=p_1^{r_1}\cdots p_s^{r_s}$ with distinct primes $p_1<\cdots<p_s$.
Let $S\subset \Z/N\Z$ be a multiset with $|S|=p_1$.
Then $C_S(k)=0$ if and only if $\sym(kS)=A(u;N/p_1)$ for some $u\in G$.
\end{cor}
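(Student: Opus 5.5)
This corollary should follow directly from Theorem~\ref{thm:weight-2p1} via the bridge identity $2C_S(k)=\ev_N(\sym(kS))$. Let $S$ have weight $|S|=p_1$ and fix $k\in G$. Put $Y:=\sigma(kS)=\sum_{s\in S}[ks]\in\N[G]$, read with multiplicity, so that $\eps(Y)=p_1$ even if $ks$ coincide for different $s$. Set $X:=\sym(Y)=Y+Y^*$. Then $X$ has exactly the form required in Theorem~\ref{thm:weight-2p1}, and $\eps(X)=2\eps(Y)=2p_1$.

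By the bridge identity, $C_S(k)=0$ holds if and only if $\ev_N(X)=0$. This is because $\ev_N(Y^*)=\overline{\ev_N(Y)}$, so $\ev_N(X)=2\,\mathrm{Re}\,\ev_N(Y)=2C_S(k)$. Theorem~\ref{thm:weight-2p1} then gives that $\ev_N(X)=0$ if and only if $X=A(u;N/p_1)$ for some $u\in G$. Conversely, if $\sym(kS)=A(u;N/p_1)$, then $\ev_N(\sym(kS))=\zeta_N^uR_{p_1}+\zeta_N^{-u}R_{p_1}=0$, so $C_S(k)=0$.

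The only point needing care is the multiset bookkeeping. We must check that $kS$ is the pushforward multiset, so that its weight stays $p_1$ when multiplication by a non-unit $k$ collapses elements, and that the case $k=0$ is covered. In that case $\sym(0\cdot S)=2p_1[0]$, which has nonzero evaluation, so both sides of the equivalence are false and it holds trivially. No real obstacle remains: all the substance lies in Theorem~\ref{thm:weight-2p1}, including its separate treatment of $p_1=2$.
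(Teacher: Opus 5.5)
Your proposal is correct and follows the paper's intended argument. You apply Theorem~\ref{thm:weight-2p1} to $X=\sym(kS)$, which has weight $\eps(X)=2p_1$, and combine it with the bridge identity $2C_S(k)=\ev_N(\sym(kS))$; the paper gives the same one-line argument for the analogous corollary to Theorem~\ref{thm:symmetric-vanish-paqb}.
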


\section{Small-weight Fourier rigidity}\label{sec:rigidity}
In this section, we prove the rigidity theorem stated in Theorem~\ref{thm:fourier-rig}. First, we pass to the integral group-ring formalism, which is better suited to the proof. Let $N$ be an odd integer, let $G \coloneqq \mathbb{Z}/N\mathbb{Z}$, and let $p$ denote the smallest prime divisor of $N$. A multiset $X$ on $G$ can be identified with its formal sum $\sigma(X) := \sum\limits_{x \in X} [x] \in \mathbb{N}[G]$. In this notation, we have $\varepsilon(X) = |X|$ and $\operatorname{ev}_N(X) = \hat{X}(1)$. Thus, the Fourier-analytic statements in Theorem~\ref{thm:fourier-rig} are precisely the group-ring statements formulated below.

\begin{theorem}\label{thm:rig}
Let $N$ be odd, and let $p$ be the smallest prime divisor of $N$. Let $G:=\Z/N\Z$ and $X,Y\in \N[G]$. 

(1) Assume  $\max\{\eps(X),\eps(Y)\}\le p$ and  $\ev_N(X)=\ev_N(Y)\neq 0$. Then $X=Y$ in $\N[G]$.

(2) Assume  $X,Y$ are symmetric , $\eps(X)=\eps(Y)\le 2p$ and $\ev_N(X)=\ev_N(Y)\neq 0$. Then $X=Y$ in $\N[G]$. 
\end{theorem}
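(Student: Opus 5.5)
The plan is to reduce to the case where $X$ and $Y$ are disjoint, and then pass to the square-free level as announced in the introduction. Put $m:=\min(X,Y)$, taken coefficientwise, and set $X':=X-m$ and $Y':=Y-m$. Then $X',Y'\in\N[G]$ are disjoint, $\ev_N(X')=\ev_N(Y')$, and the weight bounds still hold. In case (2), $m$ is symmetric, so $X',Y'$ are symmetric of equal weight $\le 2p$. It suffices to show $X'=Y'=0$. If exactly one of them is $0$, the other is a nonzero vanishing sum of weight $\le p$ (resp. $\le 2p$). By Theorem~\ref{thm:LamLeung}(3) it then has weight exactly $p$ and is a coset $u+C_p$ (resp. is $A(u;N/p)$ by Theorem~\ref{thm:weight-2p1}). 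In that case it exhausts $X$ (or $Y$) up to $m$, and I would derive a contradiction from $\ev_N(X)\neq0$ by the same weight count.

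\textbf{Square-free reduction.} Let $M=\operatorname{rad}(N)$ and $H\le G$ the subgroup of order $M$. The elements $\zeta_N^{i}$, $0\le i<N/M$, form a basis of $\Q(\zeta_N)$ over $\Q(\zeta_M)$; this is the standard fact that $[\Q(\zeta_N):\Q(\zeta_M)]=N/M$. Hence $\ev_N(X')=\ev_N(Y')$ holds if and only if, for every coset $i+H$, the slices $X'_i,Y'_i$ (translated to $\N[\Z/M\Z]$) satisfy $\ev_M(X'_i)=\ev_M(Y'_i)$. The slices are still disjoint, and their weights are at most those of $X',Y'$. So it suffices to prove the statement at square-free level $M$ for disjoint pairs, now allowing the common value to be $0$ but using the weight bound. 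If a slice pair has common value $0$, each nonzero slice is a vanishing sum of weight $\le p$, hence a full $C_p$-coset, and I handle it by the same weight-count argument as above.

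\textbf{Square-free case, part (1).} If $M=p$, then $X'_i-Y'_i$ is an integer multiple of $\sigma(C_p)$. Disjointness and weight $\le p$ force it to be $\pm\sigma(C_p)$ or $0$, which is the degenerate case just discussed. If $M$ has at least two primes, apply Theorem~\ref{thm:lam-leung-comparison} with $\nu=\eps$. If $\eps(X'_i)\le p-1$, then either $Y'_i\ge X'_i$, which by disjointness gives $X'_i=0$, or
\[
\eps(Y'_i)\ge (p-\eps(X'_i))(p_2-1)\ge p_2-1\ge p+1,
\]
a contradiction. The remaining case is $\eps(X'_i)=\eps(Y'_i)=p$, and I expect this to be the main obstacle. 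Here I would remove one point $[a]$ from $X'_i$ and compensate on the other side by $\sigma(a+C_p)-[a]$. I would then compare $X'_i-[a]$, of weight $p-1$, with $Y'_i+\sigma(a+C_p)-[a]$. The comparison theorem then yields either containment or weight $\ge p_2-1$. Containment would force $X'_i-[a]\le \sigma(a+C_p)-[a]$, i.e. $X'_i$ is a $C_p$-coset, so it vanishes and the degenerate case applies. The weight alternative needs a finer count, for instance using $\nu=\eps_0$ and induction on the number of primes via CRT slicing along a $C_{p_s}$-direction.

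\textbf{Part (2).} I would run the same strategy with the symmetric $X',Y'$ of equal weight $\le 2p$. First, using Theorem~\ref{thm:lam-leung-comparison}, I show that a small symmetric difference forces containment unless the weights reach $2p$. At weight $2p$, I form the conjugation-stable relation obtained from $\ev_N(X')-\ev_N(Y')=0$ by the $C_p$-complement trick. Theorem~\ref{thm:weight-2p1} then identifies the relevant pieces as blocks $A(u;N/p)$, which vanish and so contradict $\ev_N\neq 0$ after restoring $m$.
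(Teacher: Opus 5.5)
Your two reductions are sound and agree with the paper. These are passing to the disjoint pair $X-\min(X,Y)$, $Y-\min(X,Y)$, and slicing along the cosets of the subgroup of order $M=\operatorname{rad}(N)$ (Lemma~\ref{lem:F_r}). Theorem~\ref{thm:lam-leung-comparison} also correctly disposes of any square-free slice pair in which one side has weight $\le p-1$, since $p_2-1\ge p+1$. But what is left after this is the entire content of part (1): there are no disjoint $X,Y\in\N[\Z/M\Z]$ of weight exactly $p$ with $\ev_M(X)=\ev_M(Y)\neq 0$. Your proposal does not prove this. Trading $[a]$ for $\sigma(a+C_p)-[a]$ leaves the alternative $\eps\bigl(Y+\sigma(a+C_p)-[a]\bigr)=2p-1\ge p_2-1$. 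That is no contradiction whenever $p_2\le 2p$, which already happens for $M=15$ or $M=35$, and the ``finer count'' is exactly what is missing. The paper supplies it by induction on the number of primes, slicing along the \emph{smallest} prime. With $M=pD$, Lemma~\ref{lem:F_v}(2) (irreducibility of $\Phi_p$ over $\Q(\zeta_D)$) makes $\ev_D(Z_v^+)-\ev_D(Z_v^-)$ independent of $v\in\Z/p\Z$. The budget $\sum_v m_v\le 2p$ over $p$ slices then forces either some $m_{v_0}\le 1$ or $m_v=2$ for all $v$. In both cases every slice comparison involves weights below $q=p_2$, the smallest prime of $D$, so the induction hypothesis and Theorem~\ref{thm:LamLeung}(3) finish. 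Slicing along $p$ rather than $p_s$ is what raises the vanishing threshold from $p$ to $q$.

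Your plan for part (2) has further gaps. After coset slicing only the $0$-slice is symmetric. For $r\neq 0$ the slices need not be symmetric or of equal weight, so ``the same strategy'' does not apply to them. The paper instead uses Lemma~\ref{lem:F_r}(3): $\eps(X_r)=\eps(X_{Q-r})$ with $r\neq Q-r$ (as $Q$ is odd), hence $\eps(X_r)\le p$, and part (1) handles these slices. For the symmetric $0$-slice, Theorem~\ref{thm:lam-leung-comparison} needs $\nu(x)\le p-1$. It therefore says nothing about disjoint symmetric pairs of weight between $p$ and $2p$, which is precisely the hard range. Theorem~\ref{thm:weight-2p1} does not help either: it classifies symmetric \emph{vanishing} sums of weight exactly $2p$. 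The $C_p$-complement trick instead turns $\ev_N(X')=\ev_N(Y')$ with $\eps(Y')=2p$ into a vanishing sum of weight $2p+2p(p-1)=2p^2$. What is needed is Proposition~\ref{prop:rigid symm dis}: nonzero disjoint symmetric $A,B$ of equal weight $\le 2p$ with equal evaluation have weight $2p$ and evaluation $0$. Its proof slices along $p$ with a real constant $S$ and first shows $S\neq 0$. It then excludes $m_v\le 3$ via Lemmas~\ref{lem:v-slice}, \ref{lem:u-slice} and \ref{lem:m_v>3}, where the twin-prime case $q=p+2$ requires Lemma~\ref{lem:mv1-zero-slice}. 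Finally it forces all slices to be of type $(2,2)$ and equal, so $A=\sigma(V)\otimes A_0$.
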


The proof has two steps.  Subsection~\ref{sec:sf case} treats square-free  $N$ by decomposing
$\mathbb Z[\mathbb Z/N\mathbb Z]$ along a prime factor $N=pM$.  Subsection~\ref{sec:general odd} then
lifts the result to general odd $N$ by decomposing $\mathbb Z/N\mathbb Z$ into cosets
of the canonical subgroup of square-free order.

\subsection{Square-free case}\label{sec:sf case}

When $N=p$ has only one odd prime factor, Theorem \ref{thm:rig}  follows from the following Lemma which is a special case of Theorem \ref{thm:ker ev}. 

\begin{lemma}\label{lem:one prime}
Let $p$ be a prime. If $X,Y\in \Z[\Z/p\Z]$ satisfy $\ev_p(X)=\ev_p(Y)$, then $X-Y\in \Z\cdot\sigma(\Z/p\Z)$. In particular, $p\,|\eps(X)-\eps(Y)$.
\end{lemma}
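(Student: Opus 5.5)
Set $Z:=X-Y\in\Z[\Z/p\Z]$. By hypothesis $\ev_p(Z)=\ev_p(X)-\ev_p(Y)=0$, so $Z\in\ker(\ev_p)$. The natural route is to apply Theorem~\ref{thm:ker ev} with $N=p$. The only prime dividing $N$ is $\ell=p$, and $C_p$ is the whole group $G=\Z/p\Z$. Hence
\[
\ker(\ev_p)=\Z[G]\cdot\sigma(G).
\]

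Next, I will compute this principal ideal. For any $F=\sum_x c(x)[x]\in\Z[G]$ we have $[x]\cdot\sigma(G)=\sigma(x+G)=\sigma(G)$ for each $x$. Therefore
\[
F\cdot\sigma(G)=\eps(F)\,\sigma(G),
\]
so $\Z[G]\cdot\sigma(G)=\Z\cdot\sigma(G)$. This gives $X-Y=m\,\sigma(\Z/p\Z)$ for some $m\in\Z$. Applying the augmentation $\eps$ then yields $\eps(X)-\eps(Y)=mp$, which is the divisibility claim.

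If one prefers not to invoke Theorem~\ref{thm:ker ev}, there is a self-contained alternative. Write $Z=\sum_{j=0}^{p-1}c_j[j]$; then $\sum_j c_j\zeta_p^j=0$. The minimal polynomial of $\zeta_p$ over $\Q$ is $\Phi_p(x)=1+x+\cdots+x^{p-1}$, which has degree $p-1$. The polynomial $\sum_j c_jx^j$ has degree at most $p-1$ and vanishes at $\zeta_p$, so it must be a rational multiple of $\Phi_p$. Comparing coefficients shows that all $c_j$ are equal to a common integer $m$.

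There is no real obstacle here. The only point needing a moment's care is the identification $\Z[G]\sigma(G)=\Z\sigma(G)$, equivalently the irreducibility of $\Phi_p$, which is standard.
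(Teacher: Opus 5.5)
Your proposal is correct and matches the paper, which simply records the lemma as the special case $N=p$ of Theorem~\ref{thm:ker ev}; your computation $\Z[G]\cdot\sigma(G)=\Z\cdot\sigma(G)$ via $F\cdot\sigma(G)=\eps(F)\,\sigma(G)$ supplies the one-line detail the paper leaves implicit. Your alternative argument via the irreducibility of $\Phi_p$ is also valid and self-contained.
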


In the rest of this subsection, let $M=p_1p_2\cdots p_s$ be a square-free odd number, where $p_1<p_2<\cdots<p_s$ are primes, and let $U:=\Z/M\Z$. Assume $s\ge 2$, and put $p:=p_1$ and $q:=p_2$. Write $M=pD$, and let $V:=\Z/p\Z$ and $W:=\Z/D\Z$. 
The map  defined by 
\[
\Psi:V\times W\to U\qquad (v,w)\mapsto Dv+pw\pmod M
\] is a group isomorphism since $\gcd
(p,D)=1$. By the induced isomorphisms of group rings $\Z[U]\cong \Z[V\times W]\cong \Z[V]\otimes_{\Z}\Z[W]$, every $F\in \Z[U]$ can be written as
\[
F=\sum_{v\in V}[v]\otimes F_v,
\qquad
F_v\in \Z[W].
\]

\begin{lemma}\label{lem:F_v}
(1) If F is symmetric, then $F_{-v}=F_v^*
$ for every $v\in V$. In particular, $F_0\in \Z[W]$ is symmetric.

(2) We have $\ev_M(F)=\sum\limits_{v\in V}\zeta_p^v\,\ev_D(F_v)$. Let $A,B\in\Z[U]$ with $\ev_M(A)=\ev_M(B)$. Then there exists $S\in\Q(\zeta_D)$, such that $\ev_D(A_v)-\ev_{D}(B_v)=S$ for all $v\in V$.
\end{lemma}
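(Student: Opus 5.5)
Both parts come from unwinding the isomorphism $\Psi$ and using the factorization $\zeta_M=\zeta_p^{a}\zeta_D^{b}$ together with Lemma~\ref{lem:one prime}, applied over the field $\Q(\zeta_D)$.

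For (1), write $F=\sum_x c(x)[x]$. The coefficient of $[v]\otimes[w]$ is $c(Dv+pw)$, so $F_v=\sum_{w\in W}c(Dv+pw)[w]$. Then
\[
F_{-v}=\sum_{w\in W}c(-Dv+pw)[w]=\sum_{w\in W}c\bigl(-(Dv+p(-w))\bigr)[w].
\]
If $F$ is symmetric, this equals $\sum_{w\in W}c(Dv+p(-w))[w]$. Substituting $w\mapsto -w$ turns this into $\sum_{w}c(Dv+pw)[-w]=F_v^*$. Taking $v=0$ gives that $F_0$ is symmetric.

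For the first claim of (2), fix integers with $\zeta_M^{D}=\zeta_p$ and $\zeta_M^{p}=\zeta_D$. Since $\zeta_M=e^{2\pi i/M}$ and $M=pD$, we have $\zeta_M^D=e^{2\pi i/p}=\zeta_p$ and $\zeta_M^p=\zeta_D$ exactly. Hence
\[
\zeta_M^{Dv+pw}=\zeta_p^v\zeta_D^w ,
\]
and this is well defined modulo $M$. Summing against the coefficients gives $\ev_M(F)=\sum_v\zeta_p^v\ev_D(F_v)$, by linearity.

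For the second claim of (2), put $F=A-B$. Then
\[
0=\sum_{v\in V}\zeta_p^v\,a_v,\qquad a_v:=\ev_D(A_v)-\ev_D(B_v)\in\Q(\zeta_D).
\]
The key step is that, since $\gcd(p,D)=1$, the fields $\Q(\zeta_p)$ and $\Q(\zeta_D)$ are linearly disjoint over $\Q$. Consequently the minimal polynomial of $\zeta_p$ over $\Q(\zeta_D)$ is still $\Phi_p(x)=1+x+\dots+x^{p-1}$, of degree $p-1$, because $[\Q(\zeta_M):\Q(\zeta_D)]=\varphi(p)$. The only $\Q(\zeta_D)$-linear relations among $1,\zeta_p,\dots,\zeta_p^{p-1}$ are therefore scalar multiples of $\sum_v\zeta_p^v=0$. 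This is the field-theoretic analogue of Lemma~\ref{lem:one prime}, with $\Q(\zeta_D)$-coefficients in place of $\Z$.

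It follows that all the $a_v$ are equal. Calling the common value $S\in\Q(\zeta_D)$, we get $\ev_D(A_v)-\ev_D(B_v)=S$ for all $v$. The degree computation, which uses multiplicativity of $\varphi$ for coprime $p,D$, is the only nontrivial input. Everything else is bookkeeping with indices, where care is needed to keep the sign convention of the involution consistent.
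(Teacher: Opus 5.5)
Your proof is correct and follows essentially the same route as the paper. Part (1) is the same coefficient bookkeeping, and the evaluation formula comes from the same identity $\zeta_M^{Dv+pw}=\zeta_p^v\zeta_D^w$. The equality of all the $a_v$ follows, in both arguments, from $\Phi_p$ remaining the minimal polynomial of $\zeta_p$ over $\Q(\zeta_D)$; you justify this by the degree count $[\Q(\zeta_M):\Q(\zeta_D)]=\varphi(p)$, while the paper cites $\Q(\zeta_p)\cap\Q(\zeta_D)=\Q$.
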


\begin{proof}
We Write $F=\sum\limits_{v\in V}\sum\limits_{w\in W} c(v,w)[(v,w)]$, then $F_v=\sum\limits_{w\in W} c(v,w)[w]$.

(1) If $F$ is symmetric, then $c(v,w)=c(-v,-w)$ for all $(v,w)\in V\times W$.
Hence
\[
F_{-v}
=\sum_{w\in W} c(-v,w)[w]
=\sum_{w\in W} c(v,-w)[w]
=F_v^*.
\]

(2) Since $\ev_M([(v,w)])=\zeta_M^{Dv+pw}=\zeta_p^v\zeta_D^w$, we have
\[
\ev_M(F)
=\sum_{v\in V}\sum_{w\in W} c(v,w)\zeta_p^v\zeta_D^w
=\sum_{v\in V}\zeta_p^v\left(\sum_{w\in W} c(v,w)\zeta_D^w\right)
=\sum_{v\in V}\zeta_p^v\,\ev_D(F_v).
\]
Hence
$0=\ev_M(A-B)=\sum\limits_{v\in V}\zeta_p^v(\ev_D(A_v)-\ev_D(B_v))$. Since $p$ is prime and $\gcd(p,D)=1$, we have $\Q(\zeta_p)\cap \Q(\zeta_D)=\Q$.
 Therefore the cyclotomic polynomial $\Phi_p(T)=1+T+\cdots+T^{p-1}$ is the minimal polynomial of $\zeta_p$ over $\Q(\zeta_D)$. Hence the
coefficients $\ev_D(A_v)-\ev_D(B_v)$ are all equal.
\end{proof}

\begin{proposition}\label{prop:rigid nonsym}
Let $M$ be an odd square-free integer with smallest prime factor $p$, and let $U := \mathbb{Z}/M\mathbb{Z}$. Let $X, Y \in \mathbb{N}[U]$ be such that $\max\{\eps(X),\eps(Y)\}\le p$ and  $\ev_M(X)=\ev_M(Y)\neq 0$.
Then $X=Y$ in $\N[U]$. Furthermore, if $\max\{\eps(X),\eps(Y)\}< p$ and $\ev_M(X)=\ev_M(Y)$, then $X=Y$.
\end{proposition}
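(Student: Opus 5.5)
We argue by induction on the number $s$ of prime factors of $M$, using the slicing $F=\sum_{v\in V}[v]\otimes F_v$ from Lemma~\ref{lem:F_v}. For $s=1$ ($M=p$), Lemma~\ref{lem:one prime} gives $X-Y=n\,\sigma(\Z/p\Z)$ for some $n\in\Z$. If both weights are $<p$, the coefficient comparison forces $n=0$. If the weights are $\le p$ and $n\neq 0$, say $n=1$, then $X\ge\sigma(\Z/p\Z)$ and $\eps(X)\le p$ force $X=\sigma(\Z/p\Z)$ and $Y=0$. This contradicts $\ev_p(X)\neq 0$. So $X=Y$.

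For $s\ge 2$, write $M=pD$ as in the setup. By Lemma~\ref{lem:F_v}(2) there is a constant $S\in\Q(\zeta_D)$ with $\ev_D(X_v)-\ev_D(Y_v)=S$ for all $v\in V$. Since $\sum_v\eps(X_v)=\eps(X)\le p=|V|$, the pigeonhole principle gives two cases.

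\textbf{Case A: some slice is empty on both sides.} Suppose there is $v_0$ with $X_{v_0}=Y_{v_0}=0$. Then $S=0$, so $\ev_D(X_v)=\ev_D(Y_v)$ for every $v$. Each slice has weight $\le p<p_2$, where $p_2$ is the smallest prime of $D$. The strict version of the statement for $D$ (induction hypothesis, or the case $s=1$ when $D$ is prime) gives $X_v=Y_v$ for all $v$. Hence $X=Y$.

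\textbf{Case B: every $v$ has $X_v\neq 0$ or $Y_v\neq 0$.} Then $\eps(X)+\eps(Y)\ge p$, and we must rule this out or handle it.
\begin{itemize}
\item If some $v$ has $X_v=0$, then $\ev_D(Y_v)=-S$ for that $v$.
\item If some $w$ has $Y_w=0$, then $\ev_D(X_w)=S$ for that $w$.
\end{itemize}
The plan is to show $S=0$, which reduces to the argument of Case A. Suppose $S\neq 0$. Pick $v$ with $X_v=0$, so $Y_v\ne 0$, and $w$ with $Y_w=0$. Then $\ev_D(X_w+Y_v)=0$ is a nonempty vanishing sum of $D$-th roots of unity of weight $\le \eps(X)+\eps(Y)\le 2p$.

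If instead no slice of $X$ is zero, then every $X_v$ has weight exactly $1$ (since $\eps(X)\le p$). In that case $Y$ must have a zero slice unless $Y$ also has all slices of weight $1$.

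\textbf{Main obstacle.} The hardest step is Case B, where the weights are spread across all $p$ slices. Two tools should control it. First, Theorem~\ref{thm:LamLeung}(3): vanishing sums over $D$ have weight in $\N p_2+\cdots$, and are hence $\ge p_2>p$ and very constrained. Second, Theorem~\ref{thm:lam-leung-comparison}, applied directly to $X,Y$ over $M$. If $\eps(X)\le p-1$, it gives either $\eps(Y)\ge(p-\eps(X))(p_2-1)$ or $Y\ge X$. In the second case, $\ev_M(Y-X)=0$ with $\eps(Y-X)<p$, so $Y=X$ by Theorem~\ref{thm:LamLeung}(3), since $p$ is the least element of $W(M)\setminus\{0\}$. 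In the first case, $(p-\eps(X))(p_2-1)\le p$ forces $\eps(X)=p-1$ or so; then $Y$ is nearly full, and the same argument with the roles of $X$ and $Y$ swapped applies.

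The residual configurations are those with $\eps(X)=\eps(Y)=p$ and all slices singletons. There $X-Y$ lies in $\ker\ev_M$ with small positive and negative parts. We would check these directly using Theorem~\ref{thm:ker ev}: $X$ or $Y$ must then equal a full coset $u+C_p$, which has $\ev_M=0$, contradicting the nonvanishing hypothesis. The second (strict) assertion follows from the same comparison argument, since with both weights $<p$ Theorem~\ref{thm:lam-leung-comparison} always yields $Y\ge X$ and symmetrically $X\ge Y$.
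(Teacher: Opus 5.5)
\textbf{There is a genuine gap.} Your base case, your Case A, and your derivation of the strict assertion from Theorem~\ref{thm:lam-leung-comparison} are correct.

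The comparison theorem is actually stronger here than you use it. Since $p_2-1\ge p+1>p\ge\eps(Y)$, the alternative $\eps(Y)\ge (p-\eps(X))(p_2-1)$ is impossible whenever $\eps(X)\le p-1$; it does not ``force $\eps(X)=p-1$ or so''. So you get $Y\ge X$, and hence $X=Y$, directly. Running the same argument on the disjoint parts $Z^{+},Z^{-}$ of $Z=X-Y$ (equal weight, equal evaluation) reduces everything to one configuration: $X$ and $Y$ disjoint with $\eps(X)=\eps(Y)=p$.

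That configuration is the real content of the proposition, and your proposal does not prove it.
\begin{itemize}
\item Theorem~\ref{thm:lam-leung-comparison} says nothing there, since it needs $\nu(x)\le p-1$.
\item Your Case B does not close it. Slices with $X_v=0$ and $Y_w=0$ need not exist; a priori every slice could have $\eps(X_v)=\eps(Y_v)=1$.
\item Even when such slices exist, a nonempty vanishing sum $X_w+Y_v$ over $D$ of weight $\le 2p$ is no contradiction, because $p_2<2p$ is possible (e.g.\ $p=5$, $p_2=7$).
\item The assertion that the residual configurations have all slices singletons is not derived from anything.
\item The closing claim, that Theorem~\ref{thm:ker ev} forces $X$ or $Y$ to be a full $C_p$-coset, is just a restatement of the goal. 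Writing $X-Y=\sum_\ell f_\ell\,\sigma(C_\ell)$ with $f_\ell\in\Z[U]$ of arbitrary sign gives no control on positive and negative parts.
\end{itemize}

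\textbf{The missing idea} is the paper's count on the slices of $Z$. Put $m_v:=\eps(Z_v^+)+\eps(Z_v^-)$. Then $\sum_{v\in V}m_v\le 2p$ over $p$ slices, so either some $m_{v_0}\le 1$ or $m_v=2$ for all $v$.

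In the first case, after swapping $X$ and $Y$, write $Z_{v_0}=-B$ with $\eps(B)\le 1$. Comparing slice $v$ with slice $v_0$ gives $\ev_D(Z_v^++B)=\ev_D(Z_v^-)$, with both weights at most $p+1<p_2$. The induction hypothesis, or Theorem~\ref{thm:LamLeung}(3) when the common value is $0$, then forces $Z_v=-B$ for every $v$. Hence either $Z=0$, or $Y=\sigma(V\times\{b\})$, which vanishes.

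In the second case, types $(2,0)$ and $(0,2)$ occur equally often. One of each would give a vanishing sum of weight $4<p_2$, so every slice is of type $(1,1)$. Comparing $Z_i^++Z_j^-$ with $Z_i^-+Z_j^+$ (weight $2$, nonzero value) by induction makes $Z_v$ independent of $v$. So $X$ and $Y$ are full $C_p$-cosets and $\ev_M(X)=0$.

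What your sketch lacks is this cross-slice comparison: it produces equal-evaluation pairs of weight $<p_2$ at level $D$, to which the induction hypothesis applies.
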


\begin{proof}
The final assertion follows from the first part and Theorem \ref{thm:LamLeung}(3). It suffices to prove the first part. Let $Z:=X-Y\in \Z[U]$. We argue by induction on the number $s$ of prime factors of
$M$. The case $s=1$ follows from Lemma \ref{lem:one prime}. Assume now that $s\ge 2$. Using the $\Psi$-identification, we have $Z=\sum\limits_{v\in V}[v]\otimes Z_v$ where $Z_v\in \Z[W]$. For each $v\in V$, let $Z_v^+,Z_v^-\in \N[W]$ be the positive/negative part of $Z_v$. Then we have $0=\ev_M(Z)=\ev_M(Z^+)-\ev_M(Z^-)$ and
\begin{equation}\label{eq:e(Z_v^+)}
    \sum_{v\in V}\eps(Z_v^{+})=\eps(Z^+)\le \eps(X)\le p,
\qquad
\sum_{v\in V}\eps(Z_v^-)=\eps(Z^-)\le \eps(Y)\le p.
\end{equation}
By Lemma \ref{lem:F_v}(2), there exists $S\in \Q(\zeta_D)$ such that
\begin{equation}\label{eq:ev-ev=S1}
    \ev_D(Z_v^+)-\ev_D(Z_v^-)=S\qquad \forall\, v\in V.
\end{equation}
Set $m_v:=\eps(Z_v^+)+\eps(Z_v^-)$. Then by \eqref{eq:e(Z_v^+)},
\begin{equation}\label{eq:sum m_u<2p}
    \sum_{v\in V} m_v\le 2p.
\end{equation}
We consider the following two cases.

\noindent\textbf{Case 1: assume that $m_{v_0}\le 1$ for some $v_0\in V$.} Put $A:=Z_{v_0}^+$ and $B:=Z_{v_0}^-$. Then $A$ and $B$ are disjoint and $\eps(A)+\eps(B)=m_{v_0}\le 1$. After replacing $Z$ by $-Z$ if necessary, we may assume that $A=0$. Then $B$ is either $0$ or a singleton. And by \eqref{eq:ev-ev=S1},
\[
\ev_D(Z_v^++B)=\ev_D(Z_v^-+A)=\ev_D(Z_v^+)\qquad (\forall\, v\in V).
\]
Now $\eps(Z_v^++B)\le p+1<q$ and $\eps(Z_v^-)\le p<q$.  If $\ev_D(Z_v^++B)=\ev_D(Z_v^-)= 0$, then Theorem \ref{thm:LamLeung}(3) implies $Z_v^++B=Z_v^-=0$. Therefore $Z_v^+=Z_v^-=B=0$. If $\ev_D(Z_v^++B)=\ev_D(Z_v^-)\ne 0$, by inductive hypothesis, we have $Z_v^++B=Z_v^-$. Since $Z_v^+$ and $Z_v^-$ are disjoint, this implies $Z_v^+=0$ and $Z_v^-=B$. So we have $Z_v^+=0$ and $Z_v^-=B$ for every $v\in V$. 

If $B=0$, then $Z=0$ and $X=Y$. Otherwise $B=[b]$ for some $b\in W$, and therefore $Z^+=0$ and $Z^-=\sigma(V\times\{b\})$. Since $Z^-\le Y$, $\eps(Z^-)=p$ and $\eps(Y)\le p$, we get $Y=Z^-$. Hence $\ev_M(Y)=\ev_M(\sigma(V\times\{b\}))=\zeta_D^b\sum\limits_{v\in V}\zeta_p^v=0$, a contradiction. Therefore Case 1 implies $X=Y$.

\noindent\textbf{Case 2: assume that $m_v\ge 2$ for every $v\in V$.}  Then \eqref{eq:sum m_u<2p} implies

\begin{equation}\label{eq:m_u=2}
 m_v=2\quad \forall v\in V,
\qquad
\eps(Z^+)=\eps(Z^-)=p.   
\end{equation}
So for each $v$, the pair $(\eps(Z_v^+),\eps(Z_v^-))\in \{(2,0),(1,1),(0,2)\}.$ Let $a,b,c$ be the numbers of  these three types. Then
\[
a+b+c=p,\qquad 2a+b=\eps(Z^+)=p,\qquad b+2c=\eps(Z^-)=p,
\]
hence $a=c$. If $a=c>0$, choose $i$ of type $(2,0)$ and $j$ of type $(0,2)$. Then $\ev_D(Z_i^+)=S$ and $\ev_D(Z_j^-)=-S$, so $\ev_D(Z_i^++Z_j^-)=0$. But $\eps(Z_i^++Z_j^-)=4<q$, impossible by Theorem \ref{thm:LamLeung}(3). Hence $a=c=0$, and  $(\eps(Z_v^+),\eps(Z_v^-))=(1,1)$ for all $v\in V$.

Now for any $i,j\in V$, \eqref{eq:ev-ev=S1} gives $\ev_D(Z_i^++Z_j^-)=\ev_D(Z_i^-+Z_j^+)$. Each side has weight $2<q$, hence $\ev_D(Z_i^++Z_j^-)=\ev_D(Z_i^-+Z_j^+)\ne 0$ by Theorem \ref{thm:LamLeung}(3). By inductive hypothesis, we have $Z_i^++Z_j^-=Z_i^-+Z_j^+$. Thus $Z_i=Z_j$ for all $i,j\in V$. Hence there exist $w_1,w_2\in W$ such that $Z_v=[w_1]-[w_2]$ for all $v\in V$. Therefore $Z^+=\sigma(V\times\{w_1\})$ and $Z^-=\sigma(V\times\{w_2\})$. Now \eqref{eq:e(Z_v^+)} and \eqref{eq:m_u=2} imply $X=Z^+$ and $Y=Z^-$. Then $\ev_M(X)=\ev_M(Y)=0$, a contradiction. Hence case 2 cannot happen. 
\end{proof}

\begin{example}\label{example:rigid nonsym}
(1) The oddness assumption is essential in Proposition \ref{prop:rigid nonsym}. Take 
$M=6$ and $U=\Z/6\Z$. Let $X=[0]+[2]$ and $Y=[1]$. Then $\eps(X),\eps(Y)\le p=2$. We have $X\ne Y$ but $\ev_{6}(X)=1+\zeta_6^2=\zeta_6=\ev_{6}(Y)\ne 0$.

(2) The condition that the common evaluation is nonzero is necessary. Let $M=pq$ with $2<p<q$ primes and $U=\Z/M \Z$. Let $C_p$ be the subgroup of $U$ of order $p$. Take $X=\sigma(C_p)$ and $Y=\sigma(1+C_p)$. Then $\eps(X)=\eps(Y)=p$ and $\ev_{M}(X)=\ev_M(Y)=0$. But $X\ne Y$.

(3) The bound  $p_1$ for $\eps(X),\eps(Y)$  is optimal. Let $(p,q)$ be a twin prime pair, i.e., $q=p+2$. Let $M=pq$ with $2<p<q$ primes and $U=\Z/M \Z$. Take $X=\sigma(C_p\setminus\{0\})$ and $Y=\sigma(C_q\setminus\{0\})$. Then $\eps(X)=p-1$ and $\eps(Y)=q-1=p+1>p$. We have $\ev_{pq}(X)=\ev_{pq}(Y)=-1$ but $X\ne Y$.
\end{example}

We now consider the symmetric case. We begin by showing that a disjoint slice pair of total weight at most $3$ whose difference has real image under $\ev_D$ must have one of only a few very rigid forms. 

\begin{lemma}\label{lem:v-slice}
Let $X,Y\in\N[W]$ be disjoint. Assume that $1\le \eps(X)+\eps(Y)\le 3$ and 
$\ev_D(X)-\ev_D(Y)\in \R$. Then, after interchanging $X$ and $Y$ if necessary, one of the following holds:
\[
(X,Y)=([0],0),\qquad (C,0),\qquad (C,[0]),\qquad (P,0),
\]
where $C=C^*\in\N[W]$ with $\eps(C)=2$, and $P=P^*\in\N[W]$ with $\eps(P)=3$.
\end{lemma}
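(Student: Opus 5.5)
The plan is to convert the reality hypothesis into an equality of evaluations of two elements of $\N[W]$ of weight less than the smallest prime of $D$, use rigidity to obtain an identity in the group ring, and then read off the possible forms. Recall that here $W=\Z/D\Z$ with $D=p_2\cdots p_s$ odd and square-free, and its smallest prime factor $q=p_2$ satisfies $q>p\ge 3$, so $q\ge 5$.

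\textbf{Reduction to a group-ring identity.} Complex conjugation acts on $\ev_D$ by $\overline{\ev_D(F)}=\ev_D(F^*)$. So $\ev_D(X)-\ev_D(Y)\in\R$ is equivalent to $\ev_D(X)-\ev_D(Y)=\ev_D(X^*)-\ev_D(Y^*)$, that is,
\[
\ev_D(X+Y^*)=\ev_D(X^*+Y).
\]
Both sides are evaluations of elements of $\N[W]$ of weight $\eps(X)+\eps(Y)\le 3<5\le q$. By the final assertion of Proposition~\ref{prop:rigid nonsym}, applied to $W$ (when $D$ is prime this is just Lemma~\ref{lem:one prime}, since a nonzero multiple of $\sigma(\Z/q\Z)$ has weight at least $q$), we conclude $X+Y^*=X^*+Y$ in $\N[W]$.

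\textbf{Symmetry of $X$ and $Y$.} The identity rewrites as $Z:=X-Y=X^*-Y^*=Z^*$, so $Z$ is symmetric. Since $X$ and $Y$ are disjoint elements of $\N[W]$, they are exactly the positive part $Z^+$ and the negative part $Z^-$ of $Z$. The involution preserves signs of coefficients, so $(Z^*)^{\pm}=(Z^{\pm})^*$, and hence $X=X^*$ and $Y=Y^*$.

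\textbf{Listing the possibilities.} Because $D$ is odd, the only fixed point of $x\mapsto -x$ on $W$ is $0$. Every symmetric element of $\N[W]$ is therefore $c[0]$ plus a sum of pairs $[w]+[-w]$ with $w\neq 0$. In particular, a symmetric element of weight $1$ is $[0]$, and every symmetric element of odd weight contains $[0]$. After swapping $X$ and $Y$ we may assume $\eps(X)\ge\eps(Y)$, and we go through $(\eps(X),\eps(Y))$:
\begin{itemize}
\item $(1,0)$, $(2,0)$ and $(3,0)$ give $([0],0)$, $(C,0)$ and $(P,0)$ respectively.
\item $(1,1)$ forces $X=Y=[0]$, which contradicts disjointness, so it does not occur.
\item $(2,1)$ gives $Y=[0]$ and $X=C$ symmetric of weight $2$, which is the form $(C,[0])$.
\end{itemize}

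The only step with real content is the first, where the reality condition becomes an exact equality in $\N[W]$. The point to check there is that the weight bound $3<q$ really lets us invoke the final assertion of Proposition~\ref{prop:rigid nonsym}, which needs no nonvanishing hypothesis on the common evaluation. That assertion applies to any odd square-free modulus and relies on Theorem~\ref{thm:LamLeung}(3) for vanishing sums. The remaining steps are bookkeeping.
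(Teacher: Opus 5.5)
Your proof is correct and is essentially the paper's argument. The paper splits into the cases $(\eps(X),\eps(Y))$ and, in the substantive ones, applies Proposition~\ref{prop:rigid nonsym} on $W$ to exactly your pair $X+Y^*$ versus $X^*+Y$ (that is, $X$ versus $X^*$ when $Y=0$, and $X+[-a]$ versus $X^*+[a]$ when $Y=[a]$), handling $(1,0)$ and $(1,1)$ directly by roots of unity. You instead apply that rigidity step once for all cases and then read off the symmetric forms, which is a slightly cleaner packaging of the same idea.
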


\begin{proof}
We first note that $(\eps(X),\eps(Y))=(1,1)$ cannot occur: if $X=[x]$ and $Y=[y]$, then $\ev_D([x]-[y])\in\R$. Since $D$ is odd, this implies $x=y$, contradicting disjointness.

If $(\eps(X),\eps(Y))=(1,0)$, then after interchanging $X$ and $Y$ if necessary
we may write $(X,Y)=([a],0)$. Since $\ev_D([a])\in\R$, we have $a=0$, so $(X,Y)=([0],0)$.

If $(\eps(X),\eps(Y))=(2,0)$ or $(3,0)$, then $\ev_D(X)\in\R$, hence $\ev_D(X)=\ev_D(X^*)$. Both $X$ and $X^*$ are positive of weight at most $3<q$. Therefore
Proposition~\ref{prop:rigid nonsym} implies $X=X^*$. Thus $X$ is symmetric,
so $X=C$ or $X=P$ according as $\eps(X)=2$ or $3$.

Finally, assume $(\eps(X),\eps(Y))=(2,1)$. Write $Y=[a]$. Since $\ev_D(X)-\ev_D([a])\in\R$, we have $\ev_D(X+[{-a}])=\ev_D(X^*+[a])$. Both sides are positive of weight $3<q$. Hence Proposition~\ref{prop:rigid nonsym} 
implies $X+[{-a}]=X^*+[a]$. Therefore $a=0$ since $X$ and $Y=[a]$ are disjoint. And furthermore, $X=X^*$.
\end{proof}

The following Lemma will be used in the proof of Lemma \ref{lem:mv1-zero-slice}.
\begin{lemma}\label{lem:mv1-zero-slice}
Assume $s\ge 3$. There are no symmetric and disjoint elements $x,y \in \mathbb{N}[W]$ such that
\[
\ev_D(x)=\ev_D(y),\qquad \eps(x)=q-1,\qquad \eps(y)=2q-3.
\]

\end{lemma}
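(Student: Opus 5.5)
The plan is to argue by contradiction, combining the Lam--Leung comparison theorem with the same prime-slicing used in Proposition~\ref{prop:rigid nonsym}. Recall that $W=\Z/D\Z$ with $D=p_2\cdots p_s$ square-free and odd. Since $s\ge 3$, $D$ has at least two prime factors; its smallest is $q=p_2$ and the next is $r:=p_3>q$.

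Suppose $x,y$ exist as in the statement, and put $Z:=y-x\in\ker(\ev_D)$. Disjointness gives $Z^+=y$ and $Z^-=x$.

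\textbf{Step 1 (parity).} In the odd group $W$, a symmetric element has odd weight exactly when its coefficient at $[0]$ is odd. Since $\eps(x)=q-1$ is even and $\eps(y)=2q-3$ is odd, $0\in\operatorname{supp}(y)$, and then $0\notin\operatorname{supp}(x)$ by disjointness.

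\textbf{Step 2 (comparison bound).} Apply Theorem~\ref{thm:lam-leung-comparison} to the pair $(x,y)$ in $\Z/D\Z$, with $\nu=\eps$. Since $\nu(x)=q-1\le q-1$, either $y\ge x$ or $\eps(y)\ge (q-(q-1))(r-1)=r-1$. The first alternative is excluded because $x\neq 0$ and $x,y$ are disjoint. Hence $r\le 2q-2$. This alone is not a contradiction (e.g.\ $q=5$, $r=7$), so I will refine it by slicing.

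\textbf{Step 3 (slicing along $q$).} Write $D=qD'$ and use the $\Psi$-identification $W\cong \Z/q\Z\times\Z/D'\Z$. Then
\[
Z=\sum_{v\in\Z/q\Z}[v]\otimes Z_v,\qquad Z_v\in\Z[\Z/D'\Z].
\]
By Lemma~\ref{lem:F_v}, $Z_{-v}=Z_v^*$, $Z_0$ is symmetric, and $\ev_{D'}(Z_v^+)-\ev_{D'}(Z_v^-)=S$ for all $v$, for a single $S$.

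Set $m_v:=\eps(Z_v^+)+\eps(Z_v^-)$. Then $\sum_v m_v=3q-4<3q$, so some slice has $m_{v_0}\le 2$. Because the slices come in conjugate pairs $v,-v$ with $m_v=m_{-v}$, I would either take $v_0=0$ or get a pair of small slices.

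Now transfer the small slice to the other side, as in Case~1 of Proposition~\ref{prop:rigid nonsym}. For every $v$,
\[
\ev_{D'}\bigl(Z_v^++Z_{v_0}^-\bigr)=\ev_{D'}\bigl(Z_v^-+Z_{v_0}^+\bigr).
\]
Then compare weights against the smallest prime $r$ of $D'$. Slices of weight below $r$ are handled by Proposition~\ref{prop:rigid nonsym} and Theorem~\ref{thm:LamLeung}(3), which force $Z_v^+ +Z_{v_0}^- = Z_v^- + Z_{v_0}^+$, i.e.\ $Z_v=Z_{v_0}$. Slightly heavier slices are handled by Theorem~\ref{thm:lam-leung-comparison} applied to $\Z/D'\Z$ when $D'$ is composite, or by Lemma~\ref{lem:one prime} when $D'=r$.

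If $Z_v=Z_{v_0}$ holds for all but very few $v$, then $Z$ is essentially $\sigma(\Z/q\Z)\otimes Z_{v_0}$ plus a small correction. Lemma~\ref{lem:v-slice}, applied at level $D'$ to the real element $S$ (real because $Z_0$ is symmetric), pins $Z_{v_0}$ down to one of the forms $([0],0)$, $(C,0)$, $(C,[0])$, $(P,0)$. Summing weights over all $q$ slices then gives $\eps(x)+\eps(y)\in\{q,2q,3q\}$ up to a correction of size at most a few slices. This is incompatible with the exact pair $(q-1,2q-3)$ and with the parity information from Step~1, which says $[0]$ lies in $y$ and not in $x$.

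\textbf{Main obstacle.} The hard part is controlling the slices $v$ with large $m_v$, which can reach about $2q$ and may exceed $r$. There rigidity is unavailable and only the comparison theorem applies. I expect to need the bound $r\le 2q-2$ from Step~2 together with a counting argument: the total weight $3q-4$ leaves at most two or three heavy slices, and they must come in a conjugate pair $\pm v$. I would first try to show that a heavy pair forces, via Theorem~\ref{thm:lam-leung-comparison}, a full coset of $C_r$ in $Z_v^\pm$. I would then derive a weight contradiction, since $\eps(Z^-)=q-1<r$ makes a full $C_r$-coset in $x$ impossible.
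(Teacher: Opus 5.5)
Your framework is the paper's: slice $W\cong\Z/q\Z\times\Z/R\Z$ (your $D'$ is the paper's $R$), compare slices by rigidity at level $R$, and finish with Lemma~\ref{lem:one prime} or Theorem~\ref{thm:lam-leung-comparison} at level $R$. But the proposal stops before the actual argument, and the obstacle you flag is misdiagnosed. Heavy slices cannot come as a conjugate pair $\pm v$ with $v\neq 0$. Indeed $\eps(y_v)=\eps(y_{-v})$, and $\eps(y_0)$ is odd by the same parity argument as your Step~1. Hence every $v\neq 0$ has $\eps(y_v)\le q-2$ and $\eps(x_v)\le (q-1)/2$. Also $\sum_{v\neq 0}m_v\le 3q-5<3(q-1)$, so you may choose $v_0\neq 0$ with $m_{v_0}\le 2$. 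Then for every $v\neq 0$, both sides of $\ev_R(y_v+x_{v_0})=\ev_R(x_v+y_{v_0})$ have weight at most $q<p_3$. The second assertion of Proposition~\ref{prop:rigid nonsym} then gives $Z_v=Z_{v_0}$ for \emph{all} $v\neq 0$, with no exceptional slices. The only slice that can be heavy is $v=0$, and its excess lies in $y$ (weight up to $2q-3$, possibly $\ge p_3$), not in $x$. So your planned contradiction from a full coset of the order-$p_3$ subgroup inside $x$ targets nothing. Your closing weight count (``$\{q,2q,3q\}$ up to a correction'') is not a contradiction either, and Lemma~\ref{lem:v-slice} says nothing when $m_{v_0}=0$.

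What is genuinely missing is the zero-slice endgame. Once $Z_v=Z_{v_0}$ for all $v\neq 0$, we get $Z_{v_0}=Z_{-v_0}=Z_{v_0}^*$, so $Z_{v_0}$ is symmetric with $m_{v_0}\le 2$. The identities $\eps(x)=\eps(x_0)+(q-1)\eps(x_{v_0})=q-1$ and $\eps(y)=\eps(y_0)+(q-1)\eps(y_{v_0})=2q-3$ leave only $(x_{v_0},y_{v_0})\in\{([0],0),(0,[0]),(0,0)\}$.

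In the first case $x_0=0$ and $S=-1$, so $\ev_R(y_0+[0])=0$ with weight $2q-2$. This is an even number below $2p_3$, hence not a vanishing weight by Theorem~\ref{thm:LamLeung}(3).

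In the second case $S=1$ and $\eps(y_0)=q-2$, so $\ev_R(y_0)=\ev_R(x_0+[0])$ with both weights below $p_3$. Rigidity gives $y_0=x_0+[0]$, which is impossible by weights.

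The decisive case is the third: $x=[0]\otimes x_0$ and $y=[0]\otimes y_0$, with $\ev_R(x_0)=\ev_R(y_0)$, $\eps(x_0)=q-1$ and $\eps(y_0)=2q-3$. This is the original configuration one level down, where rigidity is unavailable. Here you need two arguments. If $s=3$, Lemma~\ref{lem:one prime} gives $p_3\mid q-2$, which is impossible. If $s\ge 4$, Theorem~\ref{thm:lam-leung-comparison} at level $R$ applies because $(p_3-(q-1))(p_4-1)\ge 3(p_4-1)>2q-3$. It forces $y_0\ge x_0$, contradicting disjointness.

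This is how the paper's Steps~1 and~3 end. Carried out as above, your choice of $v_0\neq 0$ would even replace the paper's Step~2 (the analysis of $E$) by a single rigidity comparison. As written, however, the proposal never reaches this case analysis.
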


\begin{proof}
Assume there exists such a pair $x, y \in \mathbb{N}[W]$ that satisfies the conditions. Using the $\Psi$-identification, we write $x=\sum\limits_{i\in \Z/q\Z}[i]\otimes x_i$ and 
$y=\sum\limits_{i\in \Z/q\Z}[i]\otimes y_i$ where $x_i,y_i\in \N[\Z/R\Z]$ are disjoint. By Lemma~\ref{lem:F_v}(2), there exists $T\in \Q(\zeta_R)\cap\mathbb{R}$ such that
\begin{equation}\label{eq:ev=T}
 \ev_R(x_i)-\ev_R(y_i)=T
\qquad(\forall i\in \Z/q\Z).   
\end{equation}
Let $E:=\{\,i\in (\Z/q\Z)^\times:\ x_i=0\,\}$ and $e:=|E|$.

\smallskip
\noindent
\textbf{Step 1: show that $E\ne \emptyset$.} Assume that $E=\emptyset$, then we have $x_0=0$ and $x_i=[r_i]\in \N[Z/R\Z]$ for any $i\ne 0$. We first show that $\eps(y_i)\ne 1$ when $i\ne 0$. Otherwise, suppose $y_i=[s_i]$ for some $i\ne 0$. By Lemma \ref{lem:v-slice} applied to $X=[r_i]$ and $Y=[s_i]$, we obtain a contradiction. Therefore for any $i\ne 0$, we have $\eps(y_i)=0$ or $\eps(y_i)\ge2$. Also $\eps(y_0)\ge 1$ since $\eps(y)$ is odd and $y$ is symmetric. Note that
\begin{equation}\label{eq:sum eps(yi)}
    \sum_{i\neq 0}\eps(y_i)= \eps(y)-\eps(y_0)\le 2q-4,
\end{equation}
hence there exists some $i\neq 0$ with $y_i=0$. For such an $i$, by \eqref{eq:ev=T}, we have $x_i=[0]$. Now fix any $j\neq 0$. We have,
\[
\ev_R(y_j+[0])=\ev_R(x_j+y_i)=\ev_R(x_j),\qquad \eps(y_j+[0])\le (q-1)+1<p_3,\qquad \eps(x_j)=1.
\]By Proposition~\ref{prop:rigid nonsym}, we have $y_j+[0]=x_j=[r_j]$. Therefore $y_j=0$ and $x_j=[0]$ for all $j\ne 0$. So $x=\sigma((\Z/q\Z)^\times)\otimes [0]$ and $y=[0]\otimes y_0$. Lemma \ref{lem:F_v}(2) gives $\ev_D(x)=\sum\limits_{i\neq 0}\zeta_p^i=-1$ and $\ev_D(y)=\ev_R(y_0)$. Hence
\[
\ev_R(y_0+[0])=-1+1=0,\qquad \eps(y_0+[0])=(2q-3)+1=2q-2<2p_3.
\] But there is no vanishing sum of $R$-th roots of unity of weight $2q-2$ by Theorem \ref{thm:LamLeung}(3), a contradiction. Hence $E\ne \emptyset$.

\smallskip
\noindent
\textbf{Step 2: show that $E=(\Z/q\Z)^\times$.} 
Note that $x$ is symmetric and $E\ne \emptyset$, we have $q-1\ge e\ge 2$. Choose $i\in E$ such that $d:=\eps(y_i)$ is minimal among all $j\in E$. Since $e\,d\le \sum\limits_{j\in E}\eps(y_j)\le \sum\limits_{j\neq 0}\eps(y_j)\le 2q-4$, we have $d\le \frac{2q-4}{e}$.  Let $f:=|\{\,i\in \Z/q\Z:\ x_i=0\,\}|$. Note that $f\le e+1$.  

Suppose for contradiction that $E\ne (\Z/q\Z)^\times$. We choose a $k\in (\Z/q\Z)^\times\setminus E$. Then we have 
\begin{align*}
    \eps(x_k+y_i)& = \eps(y_i)+\eps(x_k)= d+\eps(x_k)\\  
    &=d+ \eps(x)-\sum_{\substack{\eps(x_i)\ne0 \\ i\ne k}}\eps(x_i)\le d+(q-1)-(q-f-1)=d+f\le \frac{2q-4}{e}+e+1.
\end{align*}
 Note that $\frac{2q-4}{e}+e+1<q+2\le p_3$. Therefore we have
\[
\ev_R(x_k+y_i)=\ev_R(y_k+x_i)=\ev_R(y_k),\qquad\eps(x_k+y_i)<p_3,\qquad\eps(y_k)\le q-2<p_3.
\]
Thus
Proposition~\ref{prop:rigid nonsym} implies $x_k+y_i=y_k$. Since $x_k$ and $y_k$ are disjoint, this implies $x_k=0$, which contradicts to the choice of $k$. Hence $E=(\Z/q\Z)^\times$.

\smallskip
\noindent
\textbf{Step 3: derive a contradiction.} 
By Step 2, we have $x=\sum\limits_{i\in \Z/q\Z}[i]\otimes x_i=[0]\otimes x_0$. Also \eqref{eq:ev=T} gives
\[
\ev_R(y_i)=-T,\qquad \eps(y_i)<p_3\qquad (\forall i\ne 0).
\]
Then by Proposition~\ref{prop:rigid nonsym}, $y_i=y_j$ for all $i\ne0 ,j\ne0$ . Hence \eqref{eq:sum eps(yi)} implies $\eps(y_i)\le 1$ for all $i\ne 0$. Since $\ev_R(y_i)=-T\in\mathbb R$, we have $y_i$ is either $0$ or $[0]$. 

If $y_i=[0]$ for every $i\ne 0$, then we have 
\[
\ev_R(x_0+[0])=\ev_R(y_0),\qquad \eps(x_0+[0])=\eps(x)+1=q<p_3,\qquad \eps(y_0)=q-2<p_3,
\]so Proposition~\ref{prop:rigid nonsym}
implies $x_0+[0]=y_0$, impossible by comparing weights.

Now we assume that $y_i=0$ for every $i\ne 0$. Then $y=[0]\otimes y_0$. Therefore
\[
\ev_R(x_0)=\ev_R(y_0),\qquad\eps(x_0)=q-1<p_3,\qquad \eps(y_0)=2q-3.
\]
If $s=3$, then $R$ is prime; thus, Lemma \ref{lem:one prime} yields a contradiction. If $s\ge 4$, then Theorem \ref{thm:lam-leung-comparison} implies $y_0\ge x_0$ since $(p_3-\eps(x_0))(p_4-1)\ge 3(p_4-1)>2q>\eps(y_0)$. Now
 consider $y_0-x_0\in \N[\Z/R\Z]$ which is a vanishing sum of weight $q-2<p_3$. Hence we get a contradiction by Theorem \ref{thm:LamLeung}(3). Therefore such a pair $x,y$ cannot
exist.
\end{proof}

Now let $A,B\in \N[U]$ be symmetric and disjoint with $\ev_M(A)=\ev_M(B)$ and $0<k:=\eps(A)=\eps(B)\le 2p$. Assume that $s\ge 2$. Using the $\Psi$-identification, write
\[
A=\sum_{u\in V}[u]\otimes A_u,
\qquad
B=\sum_{u\in V}[u]\otimes B_u,
\]
with $A_u,B_u\in \N[W]$ disjoint. By Lemma \ref{lem:F_v}(2), there exists $S\in\Q(\zeta_D)\cap \R$, such that
\begin{equation}\label{eq:A_v-B_v S}
    \ev_D(A_v)-\ev_{D}(B_v)=S,\qquad \forall v\in V.
\end{equation}
For $u\in V$, put $a_u:=\eps(A_u)$, $b_u:=\eps(B_u)$ and $m_u:=\eps(A_u)+\eps(B_u)$, and choose a
$v\in V^\times$ such that $m_v=\min\{m_u:u\in V^\times\}$. Since $A,B$ are disjoint and symmetric, $k$ is even. And we have $\sum\limits_{u\in V}a_u=\sum\limits_{u\in V}b_u=k\le2p$ and $\sum\limits_{u\in V}m_u=2k\le 4p$.

\begin{lemma}\label{lem:u-slice}
Assume that $1\le m_v\le 3$. After interchanging $A$ and $B$ if
necessary,  $(A_v,B_v)$ is one of the four pairs listed in
Lemma~\ref{lem:v-slice}. Then the following assertions hold:

(1) If $(a_v, b_v) \in \{(1, 0), (2, 1)\}$, then $(A_u,B_u)=(A_v,B_v)$ for all $u\in V^\times$.

(2) If $(A_u,B_u)\ne (A_v,B_v)$ for some $u\in V^\times$, then $q=p+2$ and 
\[
(a_u,b_u)= \begin{cases}
    (0,p), & \text{if }(a_v,b_v)=(2,0) \\
   
    (0,p-1)\text{ or }(1,p),   & \text{if }(a_v,b_v)=(3,0).
\end{cases}
  \] 

\end{lemma}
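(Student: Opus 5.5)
The plan is to compare every slice $(A_u,B_u)$, $u\in V^\times$, with the fixed minimal slice $(A_v,B_v)$ through \eqref{eq:A_v-B_v S}, and to use small-weight rigidity in $\Z[W]$.

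First I would justify that Lemma~\ref{lem:v-slice} applies to $(A_v,B_v)$. The slices $A_v,B_v$ are disjoint, $1\le m_v\le 3$, and $\ev_D(A_v)-\ev_D(B_v)=S$. Moreover $S$ is real: by Lemma~\ref{lem:F_v}(1) we have $A_{-v}=A_v^*$ and $B_{-v}=B_v^*$, so the common value $S$ equals its own complex conjugate. So after possibly swapping $A$ and $B$, the pair $(A_v,B_v)$ is $([0],0)$, $(C,0)$, $(C,[0])$ or $(P,0)$.

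Next, for any $u\in V^\times$, \eqref{eq:A_v-B_v S} gives the key identity
\[
\ev_D(A_u+B_v)=\ev_D(B_u+A_v).
\]
If both sides have weight $<q$ (the smallest prime factor of $D$), the final assertion of Proposition~\ref{prop:rigid nonsym} forces $A_u+B_v=B_u+A_v$. Since $A_u,B_u$ are disjoint and $A_v,B_v$ are disjoint, this splits as $A_u=A_v$ and $B_u=B_v$. So the work is to bound $a_u,b_u$.

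The bounds come from symmetry. Since $a_u=a_{-u}$, $b_u=b_{-u}$ and $u\ne -u$ ($p$ odd), we get $2a_u\le k\le 2p$ and $2b_u\le 2p$. When $u\neq\pm v$, the slices at $\pm v$ also contribute to these sums, so $2a_u+2a_v\le 2p$ and $2b_u+2b_v\le 2p$. (If $u=-v$, then $A_{-v}=A_v^*=A_v$ because $A_v$ is symmetric in every case of Lemma~\ref{lem:v-slice}, and likewise $B_{-v}=B_v$, so there is nothing to prove.) The case analysis is then as follows.
\begin{itemize}
\item Case $(a_v,b_v)=(1,0)$: $a_u\le p<q$ and $b_u+1\le p+1<q$.
\item Case $(a_v,b_v)=(2,1)$: $a_u+1\le p-1<q$ and $b_u+2\le p+1<q$.
\end{itemize}
This proves (1). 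In the remaining cases $(2,0)$ and $(3,0)$ we have $b_v=0$, so the left side has weight $a_u\le p-a_v<q$. The right side $b_u+a_v$ is below $q$ unless $b_u\ge q-a_v$. Since $b_u\le p$ and $q\ge p+2$, this can only happen when $q=p+2$, with $b_u=p$ for $a_v=2$, and $b_u\in\{p-1,p\}$ for $a_v=3$.

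It remains, in these exceptional configurations, to pin down $a_u$: $a_u=0$ when $(a_v,b_v)=(2,0)$, and $a_u=0$ or $1$ according as $b_u=p-1$ or $p$ when $(a_v,b_v)=(3,0)$. Here the identity reads $\ev_D(A_u)=\ev_D(B_u+A_v)$, where $a_u$ is small and $b_u+a_v$ equals $q$ or $q+1$. I would proceed in two sub-cases.
\begin{itemize}
\item If $D=q$ is prime, Lemma~\ref{lem:one prime} gives $B_u+A_v-A_u=c\,\sigma(W)$. Comparing weights forces $c=1$, so $a_u=b_u+a_v-q$, which yields exactly the listed values.
\item If $D$ is composite, Theorem~\ref{thm:lam-leung-comparison} applies: $a_u\le p-a_v$ is small, while $b_u+a_v\le p+3$ is far below $(q-a_u)(p_3-1)$. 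Hence $B_u+A_v\ge A_u$, and the difference is a vanishing sum of weight less than $q$. By Theorem~\ref{thm:LamLeung}(3) that difference is $0$, so the weights coincide, which is impossible since $a_u<b_u+a_v$. So this sub-case does not occur.
\end{itemize}
The step I expect to be the main obstacle is this last exceptional analysis, namely making the weight bookkeeping in the comparison argument airtight. One may also need the parity of $k$ and the symmetry $A_{-u}=A_u^*$ to exclude intermediate values of $a_u$.
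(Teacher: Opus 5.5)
Your part (1) and your bound on $b_u$ in part (2) are correct, and they take a slightly different route from the paper. The paper first shows uniformly, via Lemma~\ref{lem:one prime} or Theorem~\ref{thm:lam-leung-comparison}, that $z_u=(A_v+B_u)-(A_u+B_v)\in\N[W]$. From this it deduces $A_u\le A_v$, and from $\eps(z_u)\le q+1$ that $\eps(z_u)\in\{0,q\}$. It then needs the minimality of $m_v$ to finish (1). You instead use $a_{-u}=a_u$ and $b_{-u}=b_u$ to get the sharper bounds $a_u+a_v\le p$ and $b_u+b_v\le p$ for $u\ne\pm v$. In cases $(1,0)$ and $(2,1)$ these put both sides of the slice identity below weight $q$, so minimality of $m_v$ is not needed there. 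That is a genuine simplification.

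The gap is in the composite-$D$ sub-case of the exceptional configuration. After obtaining $B_u+A_v\ge A_u$, you claim the difference has weight less than $q$. In fact its weight is $b_u+a_v-a_u$ with $b_u+a_v\in\{q,q+1\}$, and nothing forces $a_u\ge 1$. When $a_u=0$ and $b_u+a_v=q$, the difference is a vanishing sum of weight exactly $q$, which is an admissible weight.

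Nothing local rules this out. Take $A_u=0$, $A_v=[c]+[-c]$ with $0\ne c$ in the order-$q$ subgroup $C_q\le W$, and $B_u=\sigma(C_q)-A_v$. These data satisfy every relation your argument uses. So your conclusion that this sub-case ``does not occur'' is unfounded.

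The repair is the paper's weight argument. The difference is a nonzero element of $\N[W]\cap\ker(\ev_D)$ of weight at least $2+a_v$ and at most $q+1$. Since $W(D)=\N q+\N p_3+\cdots+\N p_s$ with $p_3\ge q+2$, its weight is exactly $q$. Hence $a_u=b_u+a_v-q$, exactly as in your prime sub-case, and this yields the listed values of $(a_u,b_u)$. Your closing remark about parity of $k$ and the symmetry $A_{-u}=A_u^*$ is not needed.
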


\begin{proof}
For any $u\in V^\times$, we have $\ev_D(A_u+B_v)=\ev_D(A_v+B_u)$ by \eqref{eq:A_v-B_v S}. We claim that $z_u:=(A_v+B_u)-(A_u+B_v)\in \Z[W]$ is $0$ or a vanishing sum of weight $q$ . If $s=2$, Lemma~\ref{lem:one prime} implies $z_u=n_u\cdot\sigma(W)$ for some $n_u\in\Z$. Since $\eps(A_u+B_v)=a_u+b_v\le p+1<q$, we cannot have $n_u<0$; otherwise, $A_u+B_v \ge \sigma(W)$. Hence $\eps(A_u+B_v)\ge \eps(\sigma(W))=q$, which is a contradiction. Hence $z_u\in \N[W]$. Now assume $s\ge 3$. By Theorem~\ref{thm:lam-leung-comparison} on $W$, either $A_v+B_u\ge A_u+B_v$, or $\eps(A_v+B_u)\ge (q-\eps(A_u+B_v))(p_3-1)$. It is easy to check that the second alternative is impossible. Therefore $z_u\in \N[W]$. In particular, we have $A_u\le A_v$. Since $\eps(z_u)= (a_v+b_u)-(a_u+b_v)\le 3+p\le q+1$, Theorem~\ref{thm:LamLeung}(3) implies $\eps(z_u)\in\{0,q\}$.

(1) If $(a_v,b_v)=(1,0)$ or $(2,1)$, then $\eps(z_u)=b_u+1-a_u\le p+1<q$, this implies $z_u=0$. Then $b_u+a_v=a_u+b_v$. Since $A_u\le A_v$, we have $a_u\le a_v$, hence $b_u\le b_v$. Therefore $a_u+b_u\le a_v+b_v=m_v$. By the minimality of $m_v$, equality must hold. So $a_u=a_v$ and $b_u=b_v$. Since $A_u\le A_v$, this implies $A_u=A_v$, and then $z_u=0$ implies $B_u=F$. 

(2) If $(A_u,B_u)\ne (A_v,B_v)$ for some $u\in V^\times$, then $z_u\ne 0$. Hence $\eps(z_u)=q$. Then by (1), we have $(a_v,b_v)=(2,0)$ or $(3,0)$. If $(a_v,b_v)=(2,0)$, then $q=b_u-a_u+(a_v-b_u)\le p+2$. This implies $q=p+2$, and then $a_u\le a_v\le 2$, $b_u\le p$ give $(a_u, b_u)=(0,p)$. If $(a_v,b_v)=(3,0)$, then $q=b_u-a_u+(a_v-b_u)\le p+3$. Again this implies $q=p+2$, and  $(a_u, b_u)=(0,p-1)\ \text{or}\ (1,p)$.
\end{proof}

\begin{lemma}\label{lem:m_v>3}
   If $\ev_D(A_v)-\ev_{D}(B_v)=S\ne 0$, then $m_v\ge 4.$ 
\end{lemma}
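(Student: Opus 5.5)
We argue by contradiction: assume $S\neq 0$ and $m_v\le 3$. Since $S\ne0$, the common difference $\ev_D(A_v)-\ev_D(B_v)$ is nonzero, so $A_v$ and $B_v$ are not both $0$. Hence $m_v\ge 1$, and we are in the range $1\le m_v\le 3$ of Lemma~\ref{lem:v-slice} and Lemma~\ref{lem:u-slice}. By Lemma~\ref{lem:v-slice}, after interchanging $A$ and $B$, the pair $(A_v,B_v)$ is one of $([0],0)$, $(C,0)$, $(C,[0])$, $(P,0)$.

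\textbf{Case $(a_v,b_v)\in\{(1,0),(2,1)\}$.} By Lemma~\ref{lem:u-slice}(1), $(A_u,B_u)=(A_v,B_v)$ for all $u\in V^\times$. We then use the slice at $0$. By \eqref{eq:A_v-B_v S}, $\ev_D(A_0)-\ev_D(B_0)=S=\ev_D(A_v)-\ev_D(B_v)$. Weight counting gives $a_0=k-(p-1)a_v$ and $b_0=k-(p-1)b_v$, and both are at most $2p$. Apply the comparison $\ev_D(A_0+B_v)=\ev_D(B_0+A_v)$, using Theorem~\ref{thm:lam-leung-comparison} (or Lemma~\ref{lem:one prime} when $s=2$) exactly as in the proof of Lemma~\ref{lem:u-slice}. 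This shows that $(B_0+A_v)-(A_0+B_v)$ is either $0$ or has weight a multiple of $q$, hence $0$ or $q$ for small weights.
\begin{itemize}
\item If it is $0$, then $A_0+B_v=B_0+A_v$. Disjointness forces $A_0=A_v$ and $B_0=B_v$ (up to the overlap forced by $[0]$). Then $A=\sigma(V)\otimes A_v$ and $B=\sigma(V)\otimes B_v$, so $\ev_M(A)=\ev_M(B)=0$ because $\sum_v\zeta_p^v=0$. Also $A-B=\sigma(V)\otimes(A_v-B_v)$, which lets us compare directly.
\item If it is $q$, the weight equation $(a_0-b_0)=(p-1)(b_v-a_v)=-(p-1)$ contradicts $\pm q$ plus the difference bounds.
\end{itemize}
In the first subcase we need $S$ itself. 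With $A,B$ constant across all slices, $S=\ev_D(A_v)-\ev_D(B_v)$, while $\ev_M(A)-\ev_M(B)=S\cdot\sum_v\zeta_p^v$. This is consistent, so we must instead use the symmetry and weight constraints. Since $A$ is symmetric of total weight $k$, and $A_0$ is symmetric while $A_u=A_{-u}^*$, the equal slices force $k=pa_v+(\text{correction})$. Comparing $k=\eps(A)=\eps(B)$ gives $pa_v+\delta=pb_v+\delta'$ with $|\delta-\delta'|<p$, contradicting $a_v\ne b_v$.

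\textbf{Case $(a_v,b_v)\in\{(2,0),(3,0)\}$.} Let $u\in V^\times$. Either $(A_u,B_u)=(A_v,B_v)$, or by Lemma~\ref{lem:u-slice}(2) we have $q=p+2$ and $b_u\in\{p-1,p\}$. Only a bounded number of slices can be exceptional, since $\sum b_u=k\le 2p$. The case with no exceptional slice is handled as above via weights: then $\sum_u a_u\ge 2(p-1)$ while $\sum_u b_u=b_0\le 2p$. The $0$-slice comparison then gives $\ev_D(A_0)-\ev_D(B_0)=S$ with large $b_0$, and we conclude using Theorem~\ref{thm:lam-leung-comparison}. With one or two exceptional slices $\pm u$ (symmetry pairs them via Lemma~\ref{lem:F_v}(1)), the weights force $b_0$ small and $a_0$ near $k-2(p-3)$. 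This is the configuration that Lemma~\ref{lem:mv1-zero-slice} excludes ($q-1$ versus $2q-3$, with $q=p+2$) when $s\ge3$; for $s=2$ we use Lemma~\ref{lem:one prime}.

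The main obstacle is this last bookkeeping. We must match the exceptional configuration in the case $(3,0)$ with $q=p+2$ exactly to the weights $q-1$ and $2q-3$ of Lemma~\ref{lem:mv1-zero-slice}, after subtracting the common part $\sigma(V)\otimes A_v$. We would try that subtraction first.
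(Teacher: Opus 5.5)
There is a genuine gap, in the hardest case. Your case split is the paper's: the four pairs from Lemma~\ref{lem:v-slice}, propagation to $V^\times$ via Lemma~\ref{lem:u-slice}, then the $0$-slice. With it, the cases $(C,[0])$ and $(C,0)$ without exceptional slices do close via Theorem~\ref{thm:lam-leung-comparison}. The case $(A_v,B_v)=([0],0)$ does not. There you claim the comparison theorem makes $(B_0+A_v)-(A_0+B_v)$ sign-definite ``exactly as in Lemma~\ref{lem:u-slice}''. That fails at the $0$-slice. Here $\eps(A_0)=k-(p-1)$ and $\eps(B_0+[0])=k+1$. When $s\ge 3$, $k=2p$ and $q=p+2$, you get $q-\eps(A_0)=1$. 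The theorem then only gives $B_0+[0]\ge A_0$ \emph{or} $2p+1=\eps(B_0+[0])\ge p_3-1$, and the second alternative is not excluded by weights (e.g.\ $p=5$, $q=7$, $p_3=11$).

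Once sign-definiteness is known, the contradiction is immediate: the weight difference is $p(a_v-b_v)=p$, which is not a vanishing weight on $W$. So your ``if it is $0$'' subcase and the $\delta$-correction discussion are unnecessary. The missing step is the boundary configuration that Lemma~\ref{lem:mv1-zero-slice} was designed for. Assuming $B_0+[0]\not\ge A_0$, the paper applies Theorem~\ref{thm:lam-leung-comparison} with $\nu=\eps_0$ and with $\nu=\eps$ to get $\eps_0(A_0)=\eps(A_0)=q-1$. Since $A_0$ is symmetric with even support size, $[0]\notin\operatorname{supp}(A_0)$. So $x_0=A_0$ and $y_0=B_0+[0]$ are symmetric and disjoint, of weights $q-1=p+1$ and $2q-3=2p+1$, contradicting that lemma. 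Your proposal never reaches this step.

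You instead try to use Lemma~\ref{lem:mv1-zero-slice} in the $(2,0)/(3,0)$ cases with exceptional slices, where the weights do not match. For $(C,0)$ with exceptional slices $\pm u$ of type $(0,p)$, one gets $k=2p$, $B_0=0$, $\eps(A_0)=6$ and $\ev_D(A_0)=\ev_D(C)$. Both weights are below $q$, so Proposition~\ref{prop:rigid nonsym} already gives the contradiction. For $(P,0)$, the paper counts the numbers $\alpha,\beta,\gamma$ of slices of types $(3,0),(0,p-1),(1,p)$. These are even, with $\alpha\ge 2$, $\alpha+\beta+\gamma=p-1$, $3\alpha+\gamma\le 2p$ and $(p-1)\beta+p\gamma\le 2p$. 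This forces $(p,\alpha,\beta,\gamma)\in\{(3,2,0,0),(5,2,2,0),(5,2,0,2)\}$, and each is killed by Theorem~\ref{thm:LamLeung}(3) or Proposition~\ref{prop:rigid nonsym}, with $0$-slice weights far below $q-1$ and $2q-3$. You leave this case as unfinished bookkeeping, and the subtraction strategy you propose would not produce the weights $q-1$ versus $2q-3$. As written, the $([0],0)$ case is mishandled and the $(P,0)$ case is incomplete.
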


\begin{proof}
Suppose for contradiction that $m_v\le 3$. Since $S\ne 0$, we have $1\le m_v\le 3$. After interchanging $A$ and $B$ if necessary , $(A_v,B_v)$ belongs to the four pairs in Lemma~\ref{lem:v-slice}: 

\medskip
\noindent
\textbf{Case 1: $(A_v,B_v)=([0],0)$.}
By Lemma \ref{lem:u-slice}(1), we have $(A_u,B_u)=([0],0)$ for every $u\in V^\times$. Then by \eqref{eq:A_v-B_v S},
\[
\ev_D(A_0)=\ev_D(B_0+A_v),\quad
\eps(A_0)=k-(p-1)\le p+1<q,\quad
\eps(B_0+A_v)=k+1.
\]
If $s=2$, Lemma~\ref{lem:one prime} gives a contradiction. Now assume $s\ge 3$. If either $k\le 2p-2$ or
$q\ge p+4$, then
\[
(q-\eps(A_0))(p_3-1)=(q+p-1-k)(p_3-1)>k+1=\eps(B_0+A_v),
\]
so Theorem~\ref{thm:lam-leung-comparison} implies $B_0+A_v\ge A_0$. Then
\[
z_0:=(B_0+A_v)-A_0\in \N[W],\qquad \ev_D(z_0)=0,\qquad \eps(z_0)=p<q,
\]
contradicting Theorem \ref{thm:LamLeung}(3). Thus the only remaining possibility is $k=2p$ and
$q=p+2$. If $B_0+A_v\ge A_0$, the same argument again gives a vanishing
sum in $\N[W]$ of weight $p$, impossible. Hence $B_0+A_v\not\ge A_0$. Applying
Theorem~\ref{thm:lam-leung-comparison} first with $\nu=\eps_0$ and then with
$\nu=\eps$, we get $\eps_0(A_0)=\eps(A_0)=q-1$. Since $A_0$ is symmetric and $\eps_0(A_0)$ is even, $[0]$ is not in the support
of $A_0$. Therefore $x_0:=A_0\in \N[W]$ and $y_0:=B_0+A_v\in \N[W]$ are symmetric and disjoint, with
\[
\ev_D(x_0)=\ev_D(y_0),\qquad
\eps(x_0)=q-1,\qquad
\eps(y_0)=2q-3.
\]
This contradicts Lemma~\ref{lem:mv1-zero-slice}. Hence this case is
impossible.

\medskip
\noindent
\textbf{Case 2: $(A_v,B_v)=(C,[0])$.}
By Lemma \ref{lem:u-slice}(1), we have $(A_u,B_u)=(C,[0])$ for every $u\in V^\times$. Then $\ev_D(A_0+B_v)=\ev_D(B_0+A_v)$,  
$\eps(A_0+B_v)=k-2(p-1)+1\le 3$ and $\eps(B_0+A_v)=k-p+3\le p+3$. If $s=2$, Lemma~\ref{lem:one prime} gives a contradiction. If $s\ge 3$, then Theorem~\ref{thm:lam-leung-comparison} implies $B_0+A_v\ge A_0+B_v$. Therefore
\[
z_0:=(B_0+A_v)-(A_0+B_v)\in \N[W],\qquad \ev_D(z_0)=0,\qquad \eps(z_0)=p<q,
\]
impossible by Theorem \ref{thm:LamLeung}(3). So this case cannot occur.

\medskip
\noindent
\textbf{Case 3: $(A_v,B_v)=(C,0)$.}
By Lemma \ref{lem:u-slice}, for every $u\ne 0$, either $(A_u,B_u)=(C,0)$, or, when $q=p+2$, of type $(a_u,b_u)=(0,p)$.

Suppose $(a_u,b_u)=(0,p)$ for some $u\ne 0$. Then by symmetry the same holds for $-u$. Since $B_u$ and $B_{-u}$ already contribute $2p$ to
$\eps(B)=k\le 2p$, we get $k=2p$ and $B_w=0$ for all $w\ne \pm u$. Since $(a_v,b_v)=(2,0)$, we have $u\ne \pm v$, so $p\ge 5$. Therefore
$(A_w,B_w)=(C,0)$ for all $w\ne 0,\pm u$. Hence
\[
\ev_D(A_0)=\ev_D(C),\qquad \eps(A_0)=6<q,\qquad \eps(C)=2<q.
\]
So Proposition~\ref{prop:rigid nonsym} 
implies $A_0=C$, impossible by comparing weights. 

Therefore $(A_u,B_u)=(C,0)$ for all $u\in V^\times$. Now
\[
\ev_D(A_0)=\ev_D(B_0+A_v),\quad
\eps(A_0)=k-2(p-1)\le 2,\quad
\eps(B_0+A_v)=k+2\le 2p+2.
\]
If $s=2$, Lemma~\ref{lem:one prime} gives a contradiction. If $s\ge 3$, then Theorem~\ref{thm:lam-leung-comparison} implies $B_0+A_v\ge A_0$. Therefore
\[
z_0:=(B_0+A_v)-A_0\in \N[W],\qquad \ev_D(z_0)=0,\qquad \eps(z_0)=2p,
\]
which is impossible by Theorem~\ref{thm:LamLeung}(3), since $2p<2q$. Hence
this case is impossible.

\medskip
\noindent
\textbf{Case 4: $(A_v,B_v)=(P,0)$.}
By Lemma \ref{lem:u-slice}, for every $u\ne 0$,  $(a_u,b_u)\in\{(3,0),(0,p-1),(1,p)\}$
and the latter two can occur only when $q=p+2$. Let $\alpha,\beta,\gamma$ be the numbers of these three types. Then
$\alpha,\beta,\gamma$ are even, $\alpha\ge 2$, and
\[
\alpha+\beta+\gamma=p-1,\qquad
3\alpha+\gamma\le 2p,\qquad
(p-1)\beta+p\gamma\le 2p.
\]
It follows that $(p,\alpha,\beta,\gamma) \in \{(3,2,0,0), (5,2,2,0), (5,2,0,2)\}$.

When $(p,\alpha,\beta,\gamma)=(3,2,0,0)$, the zero-slice equation yields $\ev_D(B_0+P)=0$. However, $\eps(B_0+P)=9$, which is impossible by Theorem \ref{thm:LamLeung}(3). In the case $(p,\alpha,\beta,\gamma)=(5,2,2,0)$, we have $q=7$ and the zero-slice equation gives $\ev_D(A_0)=\ev_D(B_0+P)$. Since $\eps(A_0)=k-6\le 4<7$ and $\eps(B_0+P)=k-5\le 5<7$,  Proposition~\ref{prop:rigid nonsym} implies $A_0=B_0+P$, a contradiction by comparing weights. Finally, if $(p,\alpha,\beta,\gamma)=(5,2,0,2)$, then $q=7$ and $k=10$, and the zero-slice equation becomes $\ev_D(A_0)=\ev_D(P)$. As $\eps(A_0)=2<7$ and $\eps(P)=3<7$, Proposition~\ref{prop:rigid nonsym} implies $A_0=P$, which is again impossible by comparing weights.

Thus we have $m_v\ge 4$.     
\end{proof}

Now, we are ready to show the Fourier rigidity in the square-free case.
\begin{proposition}\label{prop:rigid symm dis}
Let $A,B\in \N[U]$ be symmetric and disjoint. Assume $0<\eps(A)=\eps(B)\le 2p$ and 
$\ev_M(A)=\ev_M(B)$.
Then $\eps(A)=\eps(B)=2p$ and  $\ev_M(A)=\ev_M(B)=0$.
\end{proposition}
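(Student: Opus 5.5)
We argue by induction on the number $s$ of prime factors of $M$, using the slice decomposition set up before Lemma~\ref{lem:u-slice}. For $s=1$, Lemma~\ref{lem:one prime} gives $A-B\in\Z\cdot\sigma(U)$. Since $A,B$ are disjoint and $0<\eps(A)=\eps(B)$, this forces $A-B=0$, which is impossible because $A\neq 0$ and $A,B$ are disjoint. So for $s=1$ the hypotheses are never satisfied and the statement holds vacuously; the content of the proposition lies in $s\ge2$.

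For $s\ge2$ we use the slices $A_u,B_u$ and the common real value $S$ from \eqref{eq:A_v-B_v S}, and we split according to whether $S\neq0$ or $S=0$.

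\textbf{Case $S\neq0$.} By Lemma~\ref{lem:m_v>3}, $m_v\ge4$, so $m_u\ge4$ for every $u\in V^\times$. Because $\sum_u m_u=2k\le4p$, we get $4(p-1)+m_0\le 4p$, hence $m_0\le4$. We then compare the zero slice with a nonzero slice: $\ev_D(A_0+B_u)=\ev_D(B_0+A_u)$, where the weights are small, roughly $m_0+m_u$ with $m_u$ near $4$. In the typical situation $m_u=4$ and $m_0\le4$, all weights are below $q$ (when $q>p+2$, and $q\ge p+2$ in general), so Proposition~\ref{prop:rigid nonsym} (or Theorem~\ref{thm:LamLeung}(3) when the common value is $0$) forces $A_0+B_u=B_0+A_u$. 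Disjointness then gives $A_0=A_u$ and $B_0=B_u$, so all slices coincide. This yields $A=\sigma(V)\otimes A_0$ and $B=\sigma(V)\otimes B_0$, hence $\ev_M(A)=\ev_M(B)=0$ and $k=p\,a_0$. Since $k$ is even and at most $2p$, $k=2p$. In the borderline situations where some $m_u>4$ (the excess is only $4-m_0$) or $q=p+2$ makes the weights reach $q$, we instead invoke Theorem~\ref{thm:lam-leung-comparison} as in Lemma~\ref{lem:u-slice} to show that the difference is either $0$ or a vanishing sum of weight $q$, and we rule out the latter by counting weights.

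\textbf{Case $S=0$.} Here $\ev_D(A_u)=\ev_D(B_u)$ for every $u$, and $A_u,B_u$ are disjoint. For $u\neq0$ we have $a_u\le k\le 2p<q$ unless some slice is large. If $\ev_D(A_u)\neq0$, Proposition~\ref{prop:rigid nonsym} applied in $W$ requires weights at most $q$; when that applies it forces $A_u=B_u$, hence $A_u=B_u=0$. If instead $\ev_D(A_u)=0$ with weight below $q$, Theorem~\ref{thm:LamLeung}(3) again gives $A_u=B_u=0$. The zero slice $A_0,B_0$ is symmetric (Lemma~\ref{lem:F_v}(1)), disjoint, and satisfies $\ev_D(A_0)=\ev_D(B_0)$. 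If we can show $\eps(A_0)=\eps(B_0)$, the induction hypothesis in $W$ (whose smallest prime is $q>p$, so the bound $2p<2q$ holds) gives a contradiction unless the slice is empty. Thus the surviving configurations are those in which some nonzero slices carry vanishing sums of weight $\ge q$. We then show these must assemble, across $u$ and $-u$, into weight exactly $2p$ with total evaluation $0$; the twin-prime phenomenon $q=p+2$ is handled via Lemma~\ref{lem:mv1-zero-slice}, mirroring Case~1 of Lemma~\ref{lem:m_v>3}.

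The main obstacle is the $S=0$ case together with the boundary $q=p+2$, where slice weights can reach $q$ and the clean rigidity from Proposition~\ref{prop:rigid nonsym} fails. I would first try to control the nonzero slices by Theorem~\ref{thm:lam-leung-comparison}, reducing each to an element $z_u\in\N[W]$ that is either $0$ or a vanishing sum of weight $q$, exactly as in Lemma~\ref{lem:u-slice}. I would then count weights with $\sum_u m_u\le 4p$ to force $k=2p$ and $\ev_M(A)=0$.
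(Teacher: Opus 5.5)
There is a genuine gap, and it is not where you placed the main obstacle.

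\textbf{The case $S=0$.} You bound $a_u\le k\le 2p<q$, but $2p<q$ is false in general (e.g.\ $p=3$, $q=5$). You then plan to handle hypothetical nonzero slices carrying vanishing sums of weight $\ge q$ and to ``assemble'' them into weight $2p$, but you give no argument for this. The missing observation is the symmetry of the slicing. By Lemma~\ref{lem:F_v}(1), $A_{-u}=A_u^*$ and $B_{-u}=B_u^*$, so $a_u=a_{-u}$ and $b_u=b_{-u}$. Since $u\ne -u$ for $u\neq 0$ ($p$ odd), this gives $a_u,b_u\le k/2\le p<q$. Disjointness then kills every nonzero slice: use Proposition~\ref{prop:rigid nonsym} if the common value is nonzero, and Theorem~\ref{thm:LamLeung}(3) if it is zero. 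Hence $\eps(A_0)=\eps(B_0)=k\le 2p<2q$ automatically. The induction hypothesis in $W$ would force weight $2q$, which is a contradiction. So $S=0$ simply cannot occur, and there are no surviving configurations to assemble. This case is the easy one, and your plan for it aims at the wrong conclusion. Your base case, and your deduction of $k=2p$ once all slices coincide, are fine.

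\textbf{The case $S\neq 0$.} Your endpoint $A=\sigma(V)\otimes A_0$, $B=\sigma(V)\otimes B_0$ is right, but the key step is unjustified. Comparing the $0$-slice with a $u$-slice via Proposition~\ref{prop:rigid nonsym} needs $a_0+b_u\le q$ and $b_0+a_u\le q$. With $m_0,m_u\le 4$ these weights can reach $8$, which exceeds $q$ for $(p,q)=(3,5),(3,7),(5,7)$. So your claim that ``all weights are below $q$'' in the typical situation is false. The borderline patterns are also only waved at: $m_0=2$ with one pair $m_{\pm u_0}=5$, and for $p=3$ the case $m_{\pm 1}=5$.

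The paper's proof works in a specific order, and these are the steps you need to supply:
\begin{enumerate}
\item Show $m_v=4$ by excluding $m_u\ge 5$ for all $u\neq 0$, with a separate check when $p=3$. This gives $k=2p$ and only two possible slice patterns.
\item Exclude the pattern with $m_0=2$ by comparing slices $0$ and $v$. The weights there are at most $6<q$, because $p\ge 5$ in that pattern.
\item In the remaining pattern all $m_u=4$. Compare slice $u$ with slice $-u$ at weight exactly $4<q$. This makes $A_u,B_u$ symmetric, so every slice has type $(4,0)$, $(2,2)$ or $(0,4)$.
\item Counting shows there are equally many $(4,0)$ and $(0,4)$ slices. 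One of each would give a vanishing sum of weight $8$ in $W$, which is impossible.
\item Only once every slice has type $(2,2)$ does the $0$-versus-$u$ comparison run safely, at weight $4<q$.
\end{enumerate}
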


\begin{proof}
  We argue by induction on the number $s$ of prime factors of
$M$. The case $s=1$ follows from Lemma \ref{lem:one prime}. Assume $s\ge 2$,  and let $S$ be the constant from Lemma \ref{lem:F_v}(2): $\ev_D(A_u)-\ev_D(B_u)=S$. Recall that $a_u:=\eps(A_u)$, $b_u:=\eps(B_u)$ and $m_u:=\eps(A_u)+\eps(B_u)$.

If $S=0$, then for every $u\neq 0$ we have $\ev_D(A_u)=\ev_D(B_u)$ and 
$a_u,b_u\le p<q$.
Proposition~\ref{prop:rigid nonsym}
and the disjointness implies $A_u=B_u=0$ for every $u\neq 0$. Thus $A=[0]\otimes A_0$ and $B=[0]\otimes B_0$.
Now we have $\ev_D(A_0)=\ev_D(B_0)$ and $\eps(A_0)=\eps(B_0)\le 2p<2q$. If $(A_0,B_0)\ne (0,0)$,  the
induction hypothesis implies $\eps(A_0)=\eps(B_0)=2q$, a contradiction. Thus $A=B=0$, a contradiction. Hence $S\ne 0$.

Since $S\ne 0$, we have $m_u \ge 1$ for all $u \in V$. 
By Lemma \ref{lem:F_v}(1), we have $a_u=a_{-u}$ and $b_u=b_{-u}$. Hence $m_0=2k-\sum\limits_{u\ne 0}m_u$ is even and $m_0\ge 2$. Choose
a $v\in V^\times$ such that $m_v=\min\{m_u:u\in V^\times\}$. By Lemma \ref{lem:m_v>3}, we have $m_v\ge 4$.

We claim that $m_v=4$. Suppose for contradiction that $m_u\ge 5$ for all $u\ne 0$. If $p \ge 5$, then $\sum\limits_{u \in V} m_u \ge 5(p-1)+2 > 4p$, a contradiction. If $p=3$, then we must have $m_0 = 2$ and $m_1 = m_{-1} = 5$. Since $a_1,b_1 \le p = 3$, up to interchanging $A$ and $B$, we may assume $(a_1, b_1) = (3, 2)$, which implies $(a_0,b_0)=(0,2)$. Then $\ev_D(A_1 + B_0) = \ev_D(B_1)$. The two sides have weights $5$ and $2$, both at most $q$.  Proposition~\ref{prop:rigid nonsym}  implies $A_1 + B_0 = B_1$, a contradiction.

  By the minimality of $m_v$, we have $2k=\sum\limits_{u\in V}m_u\ge m_0+4(p-1)\ge 4p-2$. As $k\le 2p$ is even, we have $k=2p$. There are only two possibilities:

\smallskip
\noindent
{\it (a) $m_0=4$, then $m_u=4$ for every $u\in V$;}

\smallskip
\noindent
{\it (b) $m_0=2$, then $\exists u_0\in V^\times$ s.t. $m_{u_0}=m_{-u_0}=5$ and $m_u=4$ for all $u\ne0,\pm u_0$.}

\smallskip
\noindent
We first rule out {\it (b)}. If $p=3$, then this is impossible because there is
only one off-zero pair. So $p\ge 5$, hence $q\ge 7$. Then $\ev_D(A_0+B_v)=\ev_D(B_0+A_v)$, and both sides have weight at most $6<q$. Therefore Proposition~\ref{prop:rigid nonsym} implies $A_0+B_v=B_0+A_v$, hence $A_0-B_0=A_v-B_v$. Since $A_0,B_0$ and $A_v,B_v$ are disjoint, equality of differences implies
equality of positive and negative parts. Thus $A_0=A_v$ and $B_0=B_v$, contradicting $m_0=2$ and $m_v=4$. So {\it (b)} cannot occur.

\smallskip
\noindent
We are therefore in case {\it (a)}: every slice has weight $4$. Fix $u\neq 0$. Comparing the slices $u$ and $-u$, we get $\ev_D(A_u+B_{-u})=\ev_D(B_u+A_{-u})$, and both sides have weight $4<q$. So Proposition~\ref{prop:rigid nonsym} implies $A_u-B_u=A_{-u}-B_{-u}$. Hence $D_u:=A_u-B_u$ is symmetric. Since $A_u,B_u$ are disjoint and $m_u=4$, $(a_u,b_u)=(4,0),(2,2)$ or $(0,4)$ for all $u\in V$. Let $a,b,c$ be the numbers of  these three types. Then $4a+2b=2b+4c=2p$, So $a=c$. If $a=c>0$, then there exist $u,w\in V$ such that $(a_u,b_u)=(4,0)$ and $(a_w,b_w)=(0,4)$, then $\ev_D(A_u+B_w)=\ev_D(B_u+A_w)=0$, while $A_u+B_w\in \N[W]$ is vanishing sum of weight $8$, impossible
because $8<2q$ and $8$ is not a positive vanishing weight on $W$ by Theorem \ref{thm:LamLeung}(3).

Thus, every slice has type $(2,2)$. Fixing $u\ne 0$ and comparing the $0$-slice with the $u$-slice, Proposition~\ref{prop:rigid nonsym} implies $A_0+B_u=B_0+A_u$, and hence $A_u-B_u=A_0-B_0$. Since each pair $(A_u,B_u)$ is disjoint, it follows that $A_u=A_0$ and $B_u=B_0$ for all $u\in V$. Consequently, $A=\sigma(V)\otimes A_0$ and $B=\sigma(V)\otimes B_0$, which yields $\eps(A)=\eps(B)=2p$ and $\ev_M(A)=\ev_M(B)=0$.
\end{proof}

\begin{proposition}\label{prop:rigid symm}
Let $X,Y\in \N[U]$ be symmetric. Assume that $\eps(X)=\eps(Y)\le 2p$ and  $\ev_M(X)=\ev_M(Y)\neq 0$. Then $X=Y$ in $\N[U]$.
\end{proposition}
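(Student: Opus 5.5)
The plan is to reduce Proposition~\ref{prop:rigid symm} to Proposition~\ref{prop:rigid symm dis} by cancelling the common part of $X$ and $Y$. Set $Z:=X-Y\in\Z[U]$, and put $A:=Z^+$ and $B:=Z^-$. Equivalently, $A=X-\min(X,Y)$ and $B=Y-\min(X,Y)$, where the minimum is taken coefficientwise.

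First, $A$ and $B$ inherit the hypotheses. Since $X$ and $Y$ are symmetric, so is $Z$, and taking positive and negative parts commutes with the involution $F\mapsto F^*$. Hence $A$ and $B$ are symmetric, and they are disjoint by construction. From $\eps(X)=\eps(Y)$ we get $\eps(A)=\eps(B)$, and $\eps(A)\le\eps(X)\le 2p$. Finally, $\ev_M(A)-\ev_M(B)=\ev_M(X)-\ev_M(Y)=0$.

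If $A=B=0$, then $X=Y$ and we are done. Otherwise $0<\eps(A)=\eps(B)\le 2p$, so Proposition~\ref{prop:rigid symm dis} applies and gives $\eps(A)=\eps(B)=2p$. Since $A\le X$ and $\eps(X)\le 2p$, this forces $A=X$, and likewise $B=Y$. The same proposition also gives $\ev_M(A)=0$, so $\ev_M(X)=0$, contradicting the hypothesis $\ev_M(X)\neq 0$. Hence $X=Y$.

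I do not expect a genuine obstacle; all the difficulty is already contained in Proposition~\ref{prop:rigid symm dis}. The one point to state explicitly is that $(Z^*)^{\pm}=(Z^{\pm})^*$, so that $A$ and $B$ really are symmetric. The weight-saturation step, where $\eps(A)=2p\ge\eps(X)$ and $A\le X$ together force $A=X$, is what turns the conclusion $\ev_M(A)=0$ into $\ev_M(X)=0$ and so produces the contradiction.
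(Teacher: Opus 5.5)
Your proposal is correct and is essentially the paper's own argument: the paper also takes the positive and negative parts of $X-Y$, applies Proposition~\ref{prop:rigid symm dis}, and uses $D^+\le X$ with $\eps(X)\le 2p$ to force $X=D^+$, hence $\ev_M(X)=0$, which is a contradiction. Your remark that $(Z^*)^{\pm}=(Z^{\pm})^*$ simply spells out the paper's step ``since $D$ is symmetric, $D^+$ and $D^-$ are symmetric.''
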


\begin{proof}
Let $D:=X-Y=D^+-D^-$. Since $D$ is symmetric,  $D^+$ and $D^-$ are symmetric and disjoint. We have  $\eps(D^+)=\eps(D^-)\le 2p$ and 
$\ev_M(D^+)=\ev_M(D^-)$.
 If $D^+=D^-=0$,
then $X=Y$. Otherwise Proposition~\ref{prop:rigid symm dis} implies $\eps(D^+)=\eps(D^-)=2p$ and 
$\ev_M(D^+)=\ev_M(D^-)=0$. Since $D^+\le X$ and $\eps(X)\le 2p$, this implies $X=D^+$. Similarly $Y=D^-$. Hence $\ev_M(X)=0$, a contradiction. Therefore $D=0$, and so $X=Y$.
\end{proof}

\begin{remark}
    Propositions \ref{prop:rigid symm dis} and \ref{prop:rigid symm} are equivalent. Indeed, if Proposition \ref{prop:rigid symm} holds, the disjointness of $A$ and $B$ implies $\ev_M(A)=\ev_M(B)=0$. Then Theorem \ref{thm:LamLeung}(3) then  $\varepsilon(A)=\varepsilon(B)$ to $\{p, 2p\}$. Since the weight must be even by disjointness, we have $\varepsilon(A)=\varepsilon(B)=2p$.
\end{remark}

\begin{example}\label{ex:rigid-symm ex}
    (1) The oddness assumption is essential in Proposition \ref{prop:rigid symm}. Take 
$M=6,U=\Z/6\Z$. Let $X=2[0]+[2]+[4]$ and $Y=[0]+[3]+[1]+[5]$. Both are symmetric and $\eps(X)=\eps(Y)=4=2p$. We have $\ev_6(X)=\ev_{6}(Y)=1\ne 0$ but $X\ne Y$.

(2) The condition that the common evaluation is nonzero is also necessary. Let $M=pq$ with $2<p<q$ primes and $U=\Z/M \Z$. Let $C_p$ be the subgroup of $U$ of order $p$. Take $X=\sigma(C_p)+\sigma(C_p)$ and $Y=\sigma(1+C_p)+\sigma(-1+C_p)$. Both are symmetric and  $\eps(X)=\eps(Y)=2p$. We have $\ev_{M}(X)=\ev_M(Y)=0$ but $X\ne Y$.

(3) The condition that $\eps(X)=\eps(Y)$ is necessary. Let $M=pq$ with $2<p<q$ primes and $U=\Z/M \Z$.  Take $X=[0]+\sigma(C_p)$ and $Y=[0]$. Both are symmetric and  $\eps(X)=1+p<2p,\eps(Y)=1<2p$. We have $\ev_{M}(X)=\ev_M(Y)=1\ne 0$ but $X\ne Y$.

(4) The bound $2p$ is optimal. Let $M=pq$ with $2<p<q$ primes and $U=\Z/M \Z$. Take $X=[0]+\sigma(C_p)+\sigma(C_p)$ and $Y=[0]+\sigma(1+C_p)+\sigma(-1+C_p)$. Both are symmetric and  $\eps(X)=\eps(Y)=2p+1$. We have $\ev_{M}(X)=\ev_M(Y)=1\ne 0$ but $X\ne Y$.
\end{example}

\subsection{Lifting rigidity to odd \texorpdfstring{$N$}{N}}\label{sec:general odd}
In this subsection, we first establish a coset decomposition for general $N$, then apply it to prove Theorem \ref{thm:rig}.

For an integer $N=\prod\limits_{i=1}^{s} p_i^{r_i}$ with distinct  primes $p_1<\cdots<p_s$ and $r_i\ge 1$, let its radical be $M:=\prod\limits_{i=1}^{s} p_i$ and denote $Q:=\frac{N}{M}$. 
Let $G:=\Z/N\Z$,
$U:=\Z/M\Z$ and 
\[
H:=\{Qu\pmod N:\ u\in U\}\le G.
\]
Then $|H|=M$ and $G=\bigsqcup\limits_{r=0}^{Q-1}(r+H)$. For each $0\le r\le Q-1$, we define the $\Z$-linear embedding:
\[
\iota_r: \mathbb{Z}[U] \hookrightarrow \mathbb{Z}[G],\quad \sum_{u\in U} a(u)[u]\mapsto \sum_{u\in U}a(u)[r+Qu].
\]
Then $\Z[G]=\bigoplus\limits_{r\in G/H}\operatorname{Im}(\iota_r)$. Therefore after choosing the representative set $\{0,1,2,\dots,Q-1\}$ of $G/H$, any $F\in \Z[G]$ can be written uniquely as $F=\sum\limits_{r=0}^{Q-1}\iota_r(F_r)$ with $F_r\in \Z[U]$.

Parts (1) and (2) of the following Lemma are reformulations of results from \cite[Theorem 2.2]{lenstra1978vanishing} and \cite[Theorem 3.1]{lam2000vanishing}, which provide a general technique for reducing problems on vanishing sum of $N$-th roots of unity to those on $M$-th roots of unity. For the sake of completeness, we give the detailed proof below.

\begin{lemma}\label{lem:F_r} (1) We have 
\vspace{-1.5em}
\[
\operatorname{ev}_{N}(F)=\sum\limits_{r=0}^{Q-1}\zeta_N^r\,\operatorname{ev}_{M}(F_r).
\]

(2) Let $A,B\in \Z[G]$. If $\ev_{N}(A)=\ev_{N}(B)$, then $\ev_M(A_r)=\ev_M(B_r)$ for all $r$.

(3) If $F\in \Z[G]$ is symmetric.  Then $F_0$ is symmetric in $\Z[U]$. Moreover, for every $1\le r\le Q-1$, we have $\eps(F_{Q-r})=\eps(F_r)$ and 
$\ev_M(F_{Q-r})=\zeta_M^{-1}\overline{\operatorname{ev}_{M}(F_r)}$.
\end{lemma}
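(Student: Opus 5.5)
For (1), I would check the identity on basis elements and extend $\Z$-linearly. Since $N=QM$, we have $\zeta_N^{Q}=\zeta_M$. Hence
\[
\ev_N([r+Qu])=\zeta_N^{r}\zeta_N^{Qu}=\zeta_N^r\zeta_M^u=\zeta_N^r\,\ev_M([u]).
\]
Summing over $u$ with the coefficients of $F_r$, and then over $0\le r\le Q-1$, gives $\ev_N(F)=\sum_r\zeta_N^r\ev_M(F_r)$. This uses the decomposition $F=\sum_r\iota_r(F_r)$.

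For (2), put $Z:=A-B$. By linearity of $F\mapsto F_r$ we have $Z_r=A_r-B_r$, and part (1) gives
\[
0=\sum_{r=0}^{Q-1}\zeta_N^r\,\ev_M(Z_r),\qquad \ev_M(Z_r)\in\Q(\zeta_M).
\]
The key step, and the only non-formal point, is that $1,\zeta_N,\dots,\zeta_N^{Q-1}$ are linearly independent over $\Q(\zeta_M)$.

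To show this, I would compute the degree of the extension. Every prime dividing $N$ divides $M$, so
\[
\varphi(N)=N\prod_{\ell\mid N}\Bigl(1-\tfrac1\ell\Bigr)=Q\,\varphi(M).
\]
Therefore $[\Q(\zeta_N):\Q(\zeta_M)]=Q$. On the other hand, $\zeta_N$ is a root of $T^Q-\zeta_M\in\Q(\zeta_M)[T]$, which has degree $Q$. So this polynomial is the minimal polynomial of $\zeta_N$ over $\Q(\zeta_M)$, and the powers $\zeta_N^0,\dots,\zeta_N^{Q-1}$ form a basis of $\Q(\zeta_N)$ over $\Q(\zeta_M)$. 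It follows that $\ev_M(Z_r)=0$ for all $r$, which is the claim. I expect this degree argument to be the main obstacle, because it is exactly where the hypothesis that $M$ is the radical of $N$ is used.

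For (3), write $F=\sum c(x)[x]$ with $c(x)=c(-x)$, so that $F_r(u)=c(r+Qu)$.

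For $r=0$ we have $-(Qu)=Q(-u)$, hence $F_0(-u)=F_0(u)$, so $F_0$ is symmetric.

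For $1\le r\le Q-1$, I would use the identity
\[
-\bigl((Q-r)+Qw\bigr)=r+Q(-w-1)\pmod N.
\]
This gives $F_{Q-r}(w)=c\bigl(r+Q(-w-1)\bigr)=F_r(-w-1)$, that is, $F_{Q-r}=[-1]\cdot F_r^*$ in $\Z[U]$. The equality $\eps(F_{Q-r})=\eps(F_r)$ is then immediate. Moreover,
\[
\ev_M(F_{Q-r})=\sum_{x}F_r(x)\,\zeta_M^{-x-1}=\zeta_M^{-1}\,\overline{\ev_M(F_r)},
\]
where the last step uses that $F_r$ has integer coefficients.
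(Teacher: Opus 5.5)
Your proposal is correct and follows the paper's proof essentially step for step. You verify (1) on basis elements, prove (2) via $[\Q(\zeta_N):\Q(\zeta_M)]=\varphi(N)/\varphi(M)=Q$ with minimal polynomial $T^Q-\zeta_M$, and get (3) from the reindexing $-(r+Qu)\equiv (Q-r)+Q(-u-1)\pmod N$; your formula $F_{Q-r}=[-1]\cdot F_r^*$ is just a compact form of the paper's $F_{Q-r}=\sum_u c_r(u)[-u-1]$.
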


\begin{proof}
(1) Write $F=\sum\limits_{x\in G}a(x)[x]$ with $a(x)\in\Z$. Then $F_r=\sum\limits_{u\in U}c_{r}(u)[u]\in \Z[U]$ where $c_r(u):=a(r+Qu)$.
 Since $\operatorname{ev}_N([r+Qu])=\zeta_N^{r+Qu}=\zeta_N^r\zeta_M^u$, 
we have  $\operatorname{ev}_N(\iota_r(F_{r}))
=\zeta_N^r\,\operatorname{ev}_{M}(F_{r})
=\zeta_N^r\,\operatorname{ev}_{M}(F_r)$. Hence $\operatorname{ev}_{N}(F)=\sum\limits_{r=0}^{Q-1}\zeta_N^r\,\operatorname{ev}_{M}(F_r)$.

(2) Subtracting the expansions in (1) gives
$0=\ev_{N}(A)-\ev_{N}(B)=\sum\limits_{r=0}^{Q-1}\zeta_N^r\,(\ev_M(A_r)-\ev_M(B_r))$, where $\ev_M(A_r)-\ev_M(B_r)\in \Q(\zeta_M)$.
Since $\zeta_M=\zeta_N^Q$, we have $\Q(\zeta_M)\subset \Q(\zeta_N)$ and $[\Q(\zeta_N):\Q(\zeta_M)]=\frac{[\Q(\zeta_N):\Q]}{[\Q(\zeta_M):\Q]}=\frac{\varphi(N)}{\varphi(M)}=Q$. Since $x^Q-\zeta_M$ is the minimal polynomial of $\zeta_N$ over $\Q(\zeta_M)$, the set
$\{1,\zeta_N,\dots,\zeta_N^{Q-1}\}$ is a $\Q(\zeta_M)$-basis of $\Q(\zeta_N)$.
Hence $\ev_M(A_r)=\ev_M(B_r)$ for all $r$.

(3) Write $F$ and $F_r$ as in (1). Since $F$ is symmetric, we have $a(x)=a(-x)$ for all $x\in G$. For $r=0$, we get $c_0(-u)=a(-Qu)=a(Qu)=c_0(u)$
for all $u\in U$, so $F_0$ is symmetric in $\Z[U]$.

Now fix $r$ with $1\le r\le Q-1$. For every $u\in U$, since $-(r+Qu)\equiv(Q-r)+Q(-u-1)\pmod N$, we have
\[
c_r(u)=a(r+Qu)=a(-(r+Qu))=a\bigl((Q-r)+Q(-u-1)\bigr)=c_{Q-r}(-u-1).
\]
Replacing $u$ by $-u-1$, we obtain $c_{Q-r}(u)=c_r(-u-1)$, and hence
\[
F_{Q-r}
=\sum_{u\in U} c_{Q-r}(u)[u]
=\sum_{u\in U} c_r(-u-1)[u]
=\sum_{u\in U} c_r(u)[-u-1].
\]

Since the map $u\mapsto -u-1$ is a bijection of $U$, it follows that $\eps(F_{Q-r})=\sum\limits_{u\in U} c_r(u)=\eps(F_r)$. Also, $\ev_M(F_{Q-r})
=\sum\limits_{u\in U} c_r(u)\zeta_M^{-u-1}
=\zeta_M^{-1}\sum\limits_{u\in U} c_r(u)\zeta_M^{-u}=\zeta_M^{-1}\overline{\operatorname{ev}_{M}(F_r)}$.
\end{proof}

Now we are ready to prove the main  Theorem:

\begin{proof}[Proof of Theorem \ref{thm:rig}]
We can write $X=\sum\limits_{r=0}^{Q-1}\iota_r(X_r)$ and 
$Y=\sum\limits_{r=0}^{Q-1}\iota_r(Y_r)$ where $X_r,Y_r\in \N[U]$. Furthermore, Lemma~\ref{lem:F_r}(2) implies that $\ev_M(X_r)=\ev_M(Y_r)$ for all $ 0\le r\le Q-1$. 

(1) Since $\sum\limits_{r=0}^{Q-1}\eps(X_r)=\eps(X)\le p$ and 
$\sum\limits_{r=0}^{Q-1}\eps(Y_r)=\eps(Y)\le p$, 
we have $\eps(X_r)\le p$ and 
$\eps(Y_r)\le p$ for all $0\le r\le Q-1$.  If $\ev_M(X_r)=\ev_M(Y_r)\neq 0$, then Proposition \ref{prop:rigid nonsym} implies $X_r=Y_r$. If $\ev_M(X_r)=\ev_M(Y_r)= 0$, we  show that $X_r=Y_r=0$. Suppose for contradiction that $X_r\ne 0$, then Theorem \ref{thm:LamLeung}(3) implies $\eps(X_r)=p$. Therefore $X_s=0$ for all $s\neq r$. Hence by Lemma \ref{lem:F_r}(1), $\ev_N(X)=\zeta_N^r\ev_M(X_r)=0$, a contradiction. So $X_r=0$. Similarly $Y_r=0$.  Thus $X_r=Y_r$ for every $r$, and therefore $X=Y$.

(2) Fix an $r$ with $1\le r\le Q-1$. Since $X$ and $Y$ are symmetric, by Lemma \ref{lem:F_r}(3), we have $\eps(X_r)=\eps(X_{Q-r})\le p$ and 
$\eps(Y_r)=\eps(Y_{Q-r})\le p$. If $\ev_M(X_r)=\ev_M(Y_r)\neq 0$, then Proposition \ref{prop:rigid nonsym} implies
$X_r=Y_r$. If $\ev_M(X_r)=\ev_M(Y_r)= 0$, we  show that $X_r=Y_r=0$. Suppose for contradiction that $X_r\ne 0$. Theorem \ref{thm:LamLeung}(3) then implies $\varepsilon(X_r)\ge p$. Since $2p = \varepsilon(X_r)+\varepsilon(X_{Q-r}) \le \varepsilon(X) \le 2p$, it follows that $\varepsilon(X)=2p$ and $X_u=0$ for all $u\ne r, Q-r$. By Lemma \ref{lem:F_r}, we obtain $\text{ev}_{N}(X) = \zeta_N^r\ev_M(X_r) + \zeta_N^{Q-r}\ev_M(X_{Q-r}) = 0$,  a contradiction. Hence we  have $X_r=0$. Similarly $Y_r=0$.

So we have $X_r=Y_r$ for all $r\ne 0$. Therefore we have $\eps(X_0)=\eps(Y_0)\le 2p$ and $X_0,Y_0$ are symmetric. If $\ev_M(X_0)=\ev_M(Y_0)\ne0$, then Proposition \ref{prop:rigid symm} implies $X_0=Y_0$.  Suppose $\operatorname{ev}_M(X_0) = \operatorname{ev}_M(Y_0) = 0$, then Theorem \ref{thm:LamLeung}(3) implies $\eps(X_0)=\eps(Y_0)\in\{0, p, 2p\}$. Clearly, if $\eps(X_0) = \eps(Y_0) = 0$, then $X_0 = Y_0 = 0$. In the case where $\eps(X_0) = \eps(Y_0) = p$, the symmetry of $X_0$ and $Y_0$ along with the fact that $N$ is odd implies $X_0 = Y_0 = \sigma(C_p)$. Finally, if $\eps(X_0) = \eps(Y_0) = 2p$, then $X_r = Y_r = 0$ for all $r \neq 0$, which yields $\operatorname{ev}_N(X) = \operatorname{ev}_R(X_0) = 0$, a contradiction. Thus, $X_0 = Y_0$ holds in all cases. Therefore $X=Y$.
\end{proof}

As a consequence of Theorem \ref{thm:rig}, we have the following results on representation of sums of roots of unity and cosines:

\begin{cor}\label{cor:number of sol}
Let $N$ be odd, and let $p$ be the smallest prime divisor of $N$. 

(1) Let
$c\in \Q(\zeta_N)$ with $c\neq 0$. Then up to permutation, the equation
\[
\zeta_N^{x_1}+\cdots+\zeta_N^{x_k}=c
\]
has at most one solution for $1\le k\le p,\quad x_1,\cdots,x_k\in\Z/N\Z$.

(2) Let
$c\in \R$ with $c\neq 0$, and fix an integer $1\le k\le p$. Then up to permutation, the equation
\[
\cos\frac{2\pi x_1}{N}+\cdots+\cos\frac{2\pi x_k}{N}=c,
\]
has at most one solution for $x_1,\cdots,x_k\in[0,\frac{N-1}{2}]\cap\Z$.
\end{cor}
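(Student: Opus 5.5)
Both parts are direct translations of Theorem~\ref{thm:rig} into the group-ring language; the plan is to encode each solution as an element of $\N[G]$ with $G=\Z/N\Z$ and then invoke the appropriate part of that theorem.

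\emph{Part (1).} Take two solutions $(x_1,\dots,x_k)$ and $(y_1,\dots,y_{k'})$ with $1\le k,k'\le p$. (If $k$ is meant to be fixed, simply take $k=k'$.) Set
\[
X=\sum_{i}[x_i],\qquad Y=\sum_j[y_j]\in\N[G].
\]
Then $\eps(X)=k\le p$, $\eps(Y)=k'\le p$, and $\ev_N(X)=\ev_N(Y)=c\neq 0$. Theorem~\ref{thm:rig}(1) gives $X=Y$ in $\N[G]$. Hence the two multisets of exponents coincide, so $k=k'$ and the tuples agree up to permutation. No extra work is needed here.

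\emph{Part (2).} Take two solutions $(x_i)$ and $(y_i)$ with entries in $[0,\frac{N-1}{2}]$ and the same $k$. Form the symmetrizations
\[
X=\sym\Big(\sum_i[x_i]\Big),\qquad Y=\sym\Big(\sum_i[y_i]\Big).
\]
Both are symmetric with $\eps(X)=\eps(Y)=2k\le 2p$. By \eqref{eq:intro-bridge} we have $\ev_N(X)=2\sum_i\cos\frac{2\pi x_i}{N}=2c$, and likewise $\ev_N(Y)=2c$. Since $c\neq0$, this common value is nonzero, so Theorem~\ref{thm:rig}(2) yields $X=Y$.

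It remains to recover the original multisets from $X=Y$. The key observation is that $N$ is odd, so the sets $\{0\}$, $\{1,\dots,\frac{N-1}{2}\}$ and $\{\frac{N+1}{2},\dots,N-1\}$ partition $G$, and negation swaps the last two. Consequently the coefficient of $[a]$ in $X$, for $1\le a\le\frac{N-1}{2}$, equals the number of indices $i$ with $x_i=a$. The coefficient of $[0]$ equals twice the number of $i$ with $x_i=0$. Comparing these coefficients in $X$ and $Y$ shows that $\{x_i\}$ and $\{y_i\}$ agree as multisets.

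There is essentially no obstacle. The only point needing care is this final recovery step, which works precisely because oddness of $N$ keeps the representatives in $[0,\frac{N-1}{2}]$ pairwise inequivalent under $x\mapsto -x$. The weight bound $2k\le 2p$ and the nonvanishing of $2c$ are exactly the hypotheses of Theorem~\ref{thm:rig}(2).
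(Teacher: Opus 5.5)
Your proposal is correct and follows the paper's route: the paper states this corollary as an immediate consequence of Theorem~\ref{thm:rig}, with part (1) from Theorem~\ref{thm:rig}(1) and part (2) from Theorem~\ref{thm:rig}(2) applied to the symmetrized multisets. You also supply the detail the paper leaves implicit, namely recovering $\{x_i\}$ from $\sym\bigl(\sum_i [x_i]\bigr)$, which works because $N$ is odd, so the representatives $0,1,\dots,\frac{N-1}{2}$ are pairwise inequivalent under $x\mapsto -x$.
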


\section{Spectral applications to cyclic Cayley graphs}\label{sec:app cay}

In this section, we use the algebraic results of the previous sections to study the eigenvalues of cyclic Cayley graphs.

\subsection{Eigenvalues of cyclic Cayley graphs}\label{sec:eigenvalues}

Let $(G,S)$ be the Cayley graph associated with a finite abelian group $G$ and a symmetric generating set $S$. Recall that a symmetric generating set is a subset $S\subset G\setminus\{0\}$ that generates $G$ and satisfies $-s\in S$ for all $s\in S$. The discrete Laplace operator of the graph is the difference operator defined by
\[
(\Delta f)(x)=-\sum_{s\in S}\bigl(f(x+s)-f(x)\bigr).
\]
We are interested in the distribution of the eigenvalues of this Laplace operator.  Since the Cayley graph $(G,S)$ is $|S|$-regular, we have $\Delta = |S|I - A$, where $(Af)(x)=\sum\limits_{s\in S}f(x+s)$ is the adjacency operator. Therefore, they share the same eigenvectors, and their eigenvalues are determined by the relation $\lambda_\Delta=|S|-\mu_A$. Thus, investigating the eigenvalue distribution of the Laplace operator is equivalent to studying the eigenvalues of $A$.

The eigenvalues of the graph $(G,S)$ are defined as the eigenvalues of the adjacency operator $A$, and are given by
	\begin{equation}
		\mu_{\chi}=\sum_{g\in S}\chi(g), \label{eq:eigen cos}
	\end{equation}
	where $\chi$ is any character of $G$ since the characters of $G$ form a common
eigenbasis for the adjacency operator, see \cite{babai1979spectra}.
	
	In the special case where $G=\mathbb{Z}/N\mathbb{Z}$ is a cyclic group, the characters of $\mathbb{Z}/N\mathbb{Z}$ are given by $\chi_{k}(j)=e^{\frac{2\pi ikj}{N}}$ for $k,j\in \{0,1,\dots,N-1\}$. Hence, the eigenvalues of $(\mathbb{Z}/N\mathbb{Z}, S)$ are given by $\mu_{k}=\sum\limits_{j\in S} e^{i \frac{2k\pi j}{N}}$, where $k=0,1,\dots, N-1$. If $S$ is symmetric with $|S|$ even, we write $S=\{\pm s_1,\pm s_2,\dots,\pm s_t\}$. Then the eigenvalue $\mu_{k}=C_S(k)=\sum\limits_{j=1}^{t}2\cos{\frac{2k\pi s_j}{N}}$ is the structured cosine sum.

\begin{definition}\label{def:multiplicity}
For $\mu\in\R$, we define its multiplicity 
\[
\operatorname{mult}(\mu):=|\{k\in\{0,1,\dots,N-1\}:\ \mu_k=\mu\}|.
\]
\end{definition}

\subsection{The zero eigenvalue criteria}\label{sec:zero-eigenvalue}

Let $G=\Z/N\Z$ and let $S\subset G$ be a symmetric generating set. We identify a multiset $T$ in $G$ with its incidence element $\sigma(T)\in \N[G]$.
For every $k\in\{0,1,\dots,N-1\}$, $\mu_k=\sum\limits_{s\in S}\zN^{ks}=\ev_N(kS)$, where $kS$ is regarded as a symmetric multiset.  So the existence of the zero eigenvalue is equivalent to the existence of a symmetric vanishing multiset among the dilates $kS$.

For $N=p^aq^b$, Theorem~\ref{thm:symmetric-vanish-paqb} gives an explicit criterion.

\begin{theorem}\label{thm:zero-criterion-paqb}
Let $N=p^aq^b$ with distinct odd primes $p<q$, and let $S\subset \Z/N\Z$ be a symmetric generating set.
Then $0$ is an eigenvalue of $(\Z/N\Z,S)$ if and only if there exists $k\in\{1,\dots,N-1\}$ such that the multiset $kS$ is a finite sum of blocks of the types
\[
A\!\Big(u;\frac{N}{p}\Big),\qquad
A\!\Big(v;\frac{N}{q}\Big),\qquad
D_{p,q}(N).
\]
for some suitable $u,v\in G$.
\end{theorem}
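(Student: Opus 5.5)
The plan is to reduce the statement to Theorem~\ref{thm:symmetric-vanish-paqb} through the identity $\mu_k=\ev_N(kS)$. The one point needing care is that Theorem~\ref{thm:symmetric-vanish-paqb} is phrased for elements of the form $\sym(T)$, whereas $kS$ is a symmetric multiset that need not a priori be written as $\sym(\cdot)$ of something.

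First I check that $kS$ satisfies hypothesis (1) of Theorem~\ref{thm:symmetric-vanish-paqb}. Since $S$ is a symmetric subset of $G\setminus\{0\}$ and $N$ is odd, $x\mapsto -x$ has no fixed points on $S$. So $S$ splits into pairs $\{s,-s\}$ with $s\neq -s$, and $S=\sym(T)$ for a set $T$ of pair representatives. Dilation by $k$ is a group endomorphism commuting with $*$, hence $kS=\sym(kT)$ as multisets, i.e.\ as elements of $\N[G]$; here $kT=\sum_{t\in T}[kt]$ may have repeated points. Remark~\ref{rem:symmetrized-odd} gives the same conclusion directly: for odd $N$, $kS$ is symmetric of even weight $|S|$.

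Next, for $k\in\{0,\dots,N-1\}$ we have $\mu_k=\ev_N(kS)$. The value $k=0$ never gives a zero eigenvalue, since $\mu_0=|S|>0$ because $S$ generates $G$ and so is nonempty. Hence $0$ is an eigenvalue if and only if $\ev_N(kS)=0$ for some $1\le k\le N-1$.

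By the previous steps, for such $k$ the element $kS$ is a symmetric vanishing multiset in the sense of Theorem~\ref{thm:symmetric-vanish-paqb}(1). The equivalence (1)$\Leftrightarrow$(2) there therefore says exactly that $kS$ is a finite sum of blocks $A(u;N/p)$, $A(v;N/q)$ and $D_{p,q}(N)$. Conversely, each block evaluates to $0$ under $\ev_N$, so such a decomposition forces $\mu_k=0$. This proves both directions.

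There is no real obstacle. The only step needing care is the first one: verifying that $kS$, taken as a multiset (collisions under dilation by a non-unit $k$ are allowed), still lies in $\sym(\N[G])$. This uses the oddness of $N$, which rules out fixed points of the involution.
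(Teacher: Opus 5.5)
Your proposal is correct and matches the paper's proof: both use the oddness of $N$ to write $S=\sym(T)$ (the paper via Remark~\ref{rem:symmetrized-odd}), note that $kS=\sym(kT)$, and then apply Theorem~\ref{thm:symmetric-vanish-paqb} through $\mu_k=\ev_N(kS)$. The one addition on your side is the explicit exclusion of $k=0$ via $\mu_0=|S|>0$, which the paper leaves implicit.
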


\begin{proof}
Since $N$ is odd, the symmetric set $S$ has even cardinality. Hence, by Remark~\ref{rem:symmetrized-odd}, there exists $Y\in\N[G]$ such that $S=\sym(Y)$, and therefore $kS=\sym(kY)$ for every $k$. Here, we identify a multiset $T$ in $\Z/N\Z$ with its incidence element $\sigma(T)\in \N[\Z/NZ]$. Then the result follows from Theorem~\ref{thm:symmetric-vanish-paqb}.
\end{proof}

\begin{remark}
Let $N=2^aq^b$ with $q$ an odd prime.
Then Remark~\ref{rem:0 2q} gives the corresponding zero-eigenvalue criterion:
$0$ is an eigenvalue of $(\Z/N\Z,S)$ if and only if there exists $k\in\{1,\dots,N-1\}$ such that the multiset $kS$ is a finite sum of the explicit vanishing blocks $A(u;N/2)$, $A(v;N/q)$, $P_2$, $\frac{N}{4}+P_2$ when $4\mid N$, $C_q$, and $\frac{N}{2}+C_q$.
\end{remark}

For general $N$, the weight of the minimal vanishing sum of roots of unity is at least $p_1$. Hence for an odd $N$, $0$ is eigenvalue for $(\Z/N\Z,S)$ only when $|S|\ge 2p_1$. We have the following result.

\begin{theorem}\label{thm:zero-2p1}
Let $N=p_1^{r_1}\cdots p_s^{r_s}$ with $p_1<\cdots<p_s$, and let $S\subset \Z/N\Z$ be a symmetric generating set with $|S|=2p_1$.
Then $0$ is an eigenvalue of $(\Z/N\Z,S)$ if and only if there exist $k\in\{1,\dots,N-1\}$ and $u\in \Z/N\Z$ such that $kS=A\!(u;\frac{N}{p_1})$ as multisets.
\end{theorem}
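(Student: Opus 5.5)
The plan is to reduce the statement to Theorem~\ref{thm:weight-2p1}, exactly as Theorem~\ref{thm:zero-criterion-paqb} was reduced to Theorem~\ref{thm:symmetric-vanish-paqb}. For every $k$ we have $\mu_k=\ev_N(kS)$, where $kS$ is the dilated multiset and $S$ is identified with $\sigma(S)$. Hence $0$ is an eigenvalue if and only if $\ev_N(kS)=0$ for some $k$. The index $k=0$ never contributes, since $\mu_0=|S|\neq 0$, so we may restrict to $k\in\{1,\dots,N-1\}$. Sufficiency is immediate: if $kS=A(u;N/p_1)$, then $\ev_N(kS)=\zeta_N^uR_{p_1}+\zeta_N^{-u}R_{p_1}=0$.

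For necessity, fix $k\neq 0$ with $\ev_N(kS)=0$, and set $X:=\sigma(kS)\in\N[G]$. This $X$ is symmetric of weight $2p_1$, since $-(ks)=k(-s)$ and $S=-S$. The one point to check is that $X=\sym(Y)$ for some $Y\in\N[G]$, which is the hypothesis of Theorem~\ref{thm:weight-2p1}. By Remark~\ref{rem:symmetrized-odd} this amounts to the coefficients of $X$ at the fixed points of $x\mapsto -x$ being even.

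\emph{Case $N$ odd.} The only fixed point is $0$. Since $S$ is a set with $S=-S$ and $0\notin S$, the set $S$ splits into pairs $\{s,-s\}$ with $s\neq -s$. Choose $T\subset S$ containing one element from each pair, so that $S=\sym(T)$. Then $kS=\sym(kT)$ as multisets, and Remark~\ref{rem:symmetrized-odd} is not even needed.

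\emph{Case $N$ even, so $p_1=2$ and $|S|=4$.} This is the main obstacle, because $S$ might contain $N/2$, which is self-inverse; then $S$ is not of the form $\sym(T)$, and Theorem~\ref{thm:weight-2p1} does not apply verbatim. I would handle this case directly instead of using the theorem. Since $|S|=4$ and the elements $s\neq -s$ come in pairs, $S$ either consists of two pairs $\{\pm a,\pm b\}$, or has the form $\{\pm a, N/2, x\}$ with $x$ another self-inverse element. The only self-inverse nonzero element is $N/2$, so the second form would force $x=N/2$, contradicting that $S$ is a set; hence $S=\{\pm a,\pm b\}$.

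In that case $kS=\sym([ka]+[kb])$, and we are back in the setting of Theorem~\ref{thm:weight-2p1}, whose proof for $p_1=2$ (via $\cos(2\pi ka/N)+\cos(2\pi kb/N)=0$) gives $kS=A(ka;N/2)$. Both cases therefore yield $kS=A(u;N/p_1)$ as multisets for some $u\in G$, which completes the proof.
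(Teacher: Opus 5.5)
Your proposal is correct and is essentially the paper's proof: both reduce to Theorem~\ref{thm:weight-2p1} by writing $S=\sym(T)$, so that $kS=\sym(kT)$. Your even-$N$ case analysis is the paper's one-line parity observation carried out by hand for $|S|=4$: a symmetric set of even size that avoids $0$ cannot contain the self-inverse element $N/2$, so Remark~\ref{rem:symmetrized-odd} applies, and the ``main obstacle'' disappears.
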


\begin{proof}
Because $S\subset G\setminus\{0\}$ is symmetric and has even cardinality, every fixed point of $x\mapsto -x$ occurs in $S$ with even multiplicity.
Hence, by Remark~\ref{rem:symmetrized-odd}, there exists $Y\in\N[G]$ such that $S=\sym(Y)$, and therefore $kS=\sym(kY)$ for every $k$. Then Theorem~\ref{thm:weight-2p1} applies and gives exactly the stated criterion.
\end{proof}

\subsection{Multiplicities of rational eigenvalues}\label{sec:rational}

Let $N=p_1^{r_1}\cdots p_s^{r_s}$ with $p_1<p_2<\cdots<p_s$ distinct primes and $r_i$ positive integers. Let $(\mathbb{Z}/N\mathbb{Z},S)$ be the Cayley graph associated with the cyclic group $\mathbb{Z}/N\mathbb{Z}$ and a symmetric generating set $S$. In this subsection, we study rational eigenvalues of the Cayley graph $(\mathbb{Z}/N\mathbb{Z},S)$ and their multiplicities.

\begin{definition}\label{def:Ar}
Let $r\in\mathbb{Q}$. We define $\mathcal{A}_r(N,S):=\left\{1\le k\le N:\  \mu_k=r\right\}$. Then the multiplicity $\operatorname{mult}(r)=|\mathcal{A}_r(N,S)|$.

\end{definition}

Let $r \in \mathbb{Q}$ and $1 \le k \le N-1$. We write
\[
d=\gcd(k,N),\qquad M=\frac{N}{d},\qquad k=du.
\]
Let $\widebar{s}$ be the image of $s$ under the canonical quotient morphism $\mathbb{Z}/N\mathbb{Z}\to \mathbb{Z}/M\mathbb{Z}$. Since $\gcd(u,M)=1$, the morphism $\sigma_u:\mathbb{Q}(\zeta_M)\to \mathbb{Q}(\zeta_M)$ defined by $\sigma_u(\zeta_M)=\zeta_M^u$ is a Galois automorphism over $\Q$. 

\begin{proposition}\label{prop:gcd reduction}
 We have  $\mu_k=\sigma_u(\mu_d)$.
In particular, $k\in \mathcal{A}_r(N,S)$ if and only if $d\in \mathcal{A}_r(N,S)$, i.e., $\mu_k=r$\ if and only if $\mu_d=r$.

\end{proposition}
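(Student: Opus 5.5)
The plan is to rewrite both $\mu_k$ and $\mu_d$ as evaluations at primitive $M$-th roots of unity and compare them through the automorphism $\sigma_u$. Since $d=\gcd(k,N)$ and $M=N/d$, we have $\zeta_N^{d}=\zeta_M$. For every $s\in S$ this gives
\[
\zeta_N^{ks}=\zeta_N^{dus}=\zeta_M^{us}, \qquad \zeta_N^{ds}=\zeta_M^{s}.
\]
These depend only on the residue $\widebar{s}$ of $s$ modulo $M$, so the expressions $\zeta_M^{\widebar{s}}$ are well defined. Summing over $S$, counted with multiplicity, yields
\[
\mu_d=\sum_{s\in S}\zeta_M^{\widebar{s}}\in\Q(\zeta_M),\qquad \mu_k=\sum_{s\in S}\zeta_M^{u\widebar{s}} .
\]

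Next we apply $\sigma_u$, which is a field automorphism of $\Q(\zeta_M)$ because $\gcd(u,M)=1$, and is determined by $\zeta_M\mapsto\zeta_M^u$. It therefore sends $\zeta_M^{\widebar{s}}$ to $\zeta_M^{u\widebar{s}}$. By additivity, $\sigma_u(\mu_d)=\sum_{s}\zeta_M^{u\widebar s}=\mu_k$, which proves the first assertion.

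For the equivalence, $\sigma_u$ fixes $\Q$ pointwise. If $\mu_d=r\in\Q$, then $\mu_k=\sigma_u(r)=r$. Conversely, if $\mu_k=r$, then $\mu_d=\sigma_u^{-1}(\mu_k)=\sigma_u^{-1}(r)=r$, since $\sigma_u^{-1}$ also fixes $\Q$.

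The only point needing care is the well-definedness of the exponent $\widebar s$ modulo $M$ and the claim that $\sigma_u$ really is an automorphism. The first follows from the identity $\zeta_N^{ds}=\zeta_M^s$. The second is the standard description of $\Gal(\Q(\zeta_M)/\Q)\cong(\Z/M\Z)^\times$. No further obstacle is expected.
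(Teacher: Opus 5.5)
Your proposal is correct and follows essentially the same route as the paper: both proofs rewrite $\mu_k$ and $\mu_d$ as sums of $M$-th roots of unity indexed by $\widebar{s}$, observe that $\sigma_u$ carries one to the other, and conclude using that rational numbers are fixed by Galois automorphisms. Your explicit use of $\sigma_u^{-1}$ for the converse direction just spells out what the paper leaves implicit.
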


\begin{proof}
Since $k=du$, we have
\[
\mu_k
=
\sum_{s\in S} e^{2\pi i ks/N}
=
\sum_{s\in S} e^{2\pi i dus/N}
=
\sum_{s\in S} e^{2\pi i u\overline{s}/M}\in \Q(\zeta_M).
\]
On the other hand,
\[
\mu_d
=
\sum_{s\in S} e^{2\pi i ds/N}
=
\sum_{s\in S} e^{2\pi i \overline{s}/M}\in \Q(\zeta_M).
\]
Therefore $\mu_k=\sigma_u(\mu_d)$.  Because $r\in\mathbb{Q}$ is fixed by every Galois automorphism, we obtain $\mu_k=r\iff \mu_d=r$.
\end{proof}

\begin{cor}\label{cor:A general}
Let $r\in\mathbb{Q}$. If $\mathcal{A}_r(N,S)\ne \varnothing$ and $d_r$ is the smallest number in $\mathcal{A}_r(N,S)$, then $d_r\mid N$.
\end{cor}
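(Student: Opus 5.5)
The plan is to apply Proposition~\ref{prop:gcd reduction} directly to the minimal element of $\mathcal{A}_r(N,S)$. Set $k:=d_r$ and $d:=\gcd(k,N)$, and write $k=du$ with $\gcd(u,N/d)=1$.

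First, I will observe that $d$ itself lies in $\{1,\dots,N\}$. Since $d\mid N$, we have $1\le d\le N$.

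Next, I show that $d\in\mathcal{A}_r(N,S)$, splitting into two cases.
\begin{itemize}
\item If $1\le k\le N-1$, then Proposition~\ref{prop:gcd reduction} applies to $k$. Since $\mu_k=r$, it gives $\mu_d=r$, so $d\in\mathcal{A}_r(N,S)$.
\item If $k=N$, then $d=N=k$, and there is nothing to prove.
\end{itemize}
Here I use that $\mu_N=\mu_0$ depends only on the residue of $k$ modulo $N$, so the index $k=N$ in Definition~\ref{def:Ar} is consistent with the character formula.

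Finally, $d=\gcd(k,N)$ divides $k$, so $d\le k=d_r$. By the minimality of $d_r$ we also have $d\ge d_r$. Hence $d_r=d=\gcd(d_r,N)$, and therefore $d_r\mid N$.

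There is no real obstacle in this argument. The only point needing care is the boundary index $k=N$, which falls outside the range $1\le k\le N-1$ of Proposition~\ref{prop:gcd reduction}. It is handled trivially because in that case $\gcd(N,N)=N$.
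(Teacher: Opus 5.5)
Your proof is correct and is essentially the paper's argument: apply Proposition~\ref{prop:gcd reduction} to get $\gcd(d_r,N)\in\mathcal{A}_r(N,S)$, then use $\gcd(d_r,N)\le d_r$ and the minimality of $d_r$ to conclude $d_r=\gcd(d_r,N)\mid N$. Your separate handling of the boundary index $k=N$, which lies outside the range $1\le k\le N-1$ of that proposition, is a small bit of care the paper omits.
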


\begin{proof}
Let $d=\gcd(d_r,N)$. By Proposition \ref{prop:gcd reduction}, we have $d\in \mathcal{A}_r(N,S)$. Since $d\le d_r$ and $d_r$ is the smallest element of $\mathcal{A}_r(N,S)$, we must have $d=d_r$. Therefore $d_r\mid N$.
\end{proof}

We have the following formula for multiplicities:

\begin{proposition}\label{prop:mr exact}
Let $r\in\mathbb{Q}$. Then
\[
\mathcal{A}_r(N,S)=\coprod_{\substack{d\mid N\\ \mu_d=r}}
\left\{1\le k\le N-1:\  \gcd(k,N)=d\right\}.
\]
Consequently,
\[
\operatorname{mult}(r)=\sum_{\substack{d\mid N\\ \mu_d=r}}\varphi\!\left(\frac{N}{d}\right).
\]In particular, let $d_r$ be the smallest positive integer such that $\mu_{d_r}=r$. Then $d_r\mid N$ and $\operatorname{mult}(r)\ge \varphi\!\left(\frac{N}{d_r}\right)$.
\end{proposition}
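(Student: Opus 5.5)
The plan is to derive Proposition~\ref{prop:mr exact} directly from Proposition~\ref{prop:gcd reduction} by partitioning the index set according to the value of $\gcd(k,N)$. First we handle the indexing convention. The set $\mathcal A_r(N,S)$ uses $1\le k\le N$, and the index $k=N$ corresponds to $k\equiv 0$. Since $\gcd(N,N)=N$, this index falls in the class $d=N$, which contributes $\varphi(1)=1$ exactly when $\mu_N=\mu_0=|S|=r$. We will read the range in the displayed union as $1\le k\le N$ (equivalently $k\in\Z/N\Z$), so that the class $d=N$ is included. With that convention, every $k$ in the range has a well-defined $d=\gcd(k,N)$ dividing $N$, and the sets $\{k:\gcd(k,N)=d\}$ for $d\mid N$ are pairwise disjoint and cover the range.

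Next we show that membership in $\mathcal A_r(N,S)$ is constant on each class. For $k$ with $\gcd(k,N)=d$, Proposition~\ref{prop:gcd reduction} gives $\mu_k=r\iff\mu_d=r$, where $d$ itself lies in the range and divides $N$. So a whole class lies in $\mathcal A_r(N,S)$ exactly when $\mu_d=r$, and otherwise the class is disjoint from it. This proves the disjoint-union description.

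For the count, we use the standard fact that $k\mapsto k/d$ is a bijection from $\{1\le k\le N:\gcd(k,N)=d\}$ onto $\{1\le u\le N/d:\gcd(u,N/d)=1\}$. Hence the class of $d$ has exactly $\varphi(N/d)$ elements, and summing over the divisors $d$ with $\mu_d=r$ gives the formula for $\operatorname{mult}(r)$.

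Finally, the statement $d_r\mid N$ is Corollary~\ref{cor:A general}. Since $d_r$ is then one of the summation indices, dropping all the other (nonnegative) terms gives $\operatorname{mult}(r)\ge\varphi(N/d_r)$.

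The only point needing care is the boundary convention around $k=N$ versus $k\in\{1,\dots,N-1\}$. With the range read as $1\le k\le N$ this is bookkeeping, and the rest is routine.
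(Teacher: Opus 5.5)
Your argument is correct and is essentially the paper's proof: you partition the indices by $d=\gcd(k,N)$, use Proposition~\ref{prop:gcd reduction} to see that membership in $\mathcal{A}_r(N,S)$ is constant on each class, count each class as $\varphi(N/d)$, and invoke Corollary~\ref{cor:A general} for $d_r\mid N$. Reading the range as $1\le k\le N$ is the right repair of the paper's displayed $1\le k\le N-1$: with the literal range, the class $d=N$ would be empty, yet the multiplicity formula still counts it as $\varphi(1)=1$, which matters exactly when $r=\mu_0=|S|$.
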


\begin{proof}
 By Proposition \ref{prop:gcd reduction}, we have $k\in \mathcal{A}_r(N,S)\iff \gcd(k,N)\in \mathcal{A}_r(N,S)$. 

Hence
\[
\mathcal{A}_r(N,S)=\coprod_{\substack{d\mid N\\ \mu_d=r}} \left\{1\le k\le N-1:\  \gcd(k,N)=d\right\}.
\]Therefore,
\[
\operatorname{mult}(r)=|\mathcal{A}_r(N,S)|=\sum_{\substack{d\mid N\\ \mu_d=r}}\varphi\!\left(\frac{N}{d}\right).
\]
\end{proof}

\subsection{Multiplicity bounds for nonzero eigenvalues under small support}\label{sec:nonzero-multiplicity}

In this subsection, we prove Theorem \ref{thm:m(u)}. We first consider the case when $N$ is even.

\begin{theorem}\label{thm:mle6}
Let $N>3$ be even. Let $S=\{\pm s_1,\pm s_2\}\subset \Z/N\Z$ be a symmetric generating set and assume $S$ contains a unit.
Then for every nonzero eigenvalue $\mu$, $\operatorname{mult}(\mu)\le 6$, and this bound is optimal.
\end{theorem}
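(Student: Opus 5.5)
The plan is to reduce the statement to Proposition~\ref{prop:three-reps} on two-cosine fibers. Here $S=\{\pm s_1,\pm s_2\}$ and $\mu_k=2\cos\frac{2\pi ks_1}{N}+2\cos\frac{2\pi ks_2}{N}$. For a nonzero eigenvalue $\mu$, put $c=\mu/2\neq 0$. Each $k$ with $\mu_k=\mu$ gives a multiset of angles $\{\alpha_k,\beta_k\}\subset[0,\pi]\cap\Q\pi$, where $\alpha_k$ is the representative in $[0,\pi]$ of $\pm 2\pi ks_1/N \bmod 2\pi$ (similarly $\beta_k$), and this multiset lies in $\mathcal R(c)$. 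By Proposition~\ref{prop:three-reps}, the map $k\mapsto\{\alpha_k,\beta_k\}$ therefore takes at most $3$ values. The bound $6$ will follow once each fiber of this map has size at most $2$.

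For the fibers, suppose $\{\alpha_k,\beta_k\}=\{\alpha_{k'},\beta_{k'}\}$. Since $S$ contains a unit, we may assume $s_1$ is a unit (replace $s_1$ by $-s_1$ or swap indices). The angle $\alpha_k$ determines $ks_1$ up to sign modulo $N$. There are two subcases.
\begin{enumerate}
\item If $\alpha_{k'}=\alpha_k$, then $k's_1=\pm ks_1$, hence $k'=\pm k$.
\item If instead $\alpha_{k'}=\beta_k$ and $\beta_{k'}=\alpha_k$, then $k's_1=\pm ks_2$ and $k's_2=\pm ks_1$. Hence $k'$ is determined up to sign by $k$ via $k'=\pm ks_2s_1^{-1}$.
\end{enumerate}
So each fiber has at most $4$ elements, and I need to sharpen this to $2$. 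In the swapped subcase the relations give $ks_2^2\equiv\pm ks_1^2$, which constrains $k$. I would try to show that the swapped subcase and the unswapped subcase with $k'=-k$ cannot coexist with distinct $k'$ unless the pair $\{\alpha,\beta\}$ has equal angles or is degenerate. Failing that, I would count differently. The involution $k\mapsto -k$ always preserves fibers, so $\operatorname{mult}(\mu)=2\cdot\#\{\pm k\text{ classes}\}$ (note $k\neq-k$ unless $2k\equiv 0$, and those $k\in\{0,N/2\}$ need separate treatment). Then I need at most $3$ classes. Each class maps to an element of $\mathcal R(c)$, and the swap map $k\mapsto ks_2s_1^{-1}$ (when it is valid) sends a class to a class with the same multiset. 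So the real task is to show that two distinct classes cannot share a multiset, or, if they do, that $\mathcal R(c)$ then has fewer elements in use. For this I would use the statement in Proposition~\ref{prop:three-reps} that at most one pair in $\mathcal R(c)$ has equal reduced denominators: a swap forces the two angles to have the same denominator, because $ks_1$ and $ks_2$ have the same order when the swap relation holds.

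Optimality is handled by an explicit example. I would take $c=\frac12$, with representations $\{0,\frac{2\pi}{3}\},\{\frac{\pi}{5},\frac{3\pi}{5}\},\{\frac{\pi}{2},\frac{\pi}{3}\}$, and choose $N$ divisible by $60$ (for instance $N=60$) together with $s_1$ a unit and $s_2$ such that the multiplicity-$6$ value is realized. I would search small $N$ (such as $N=30$ or $60$) with $S=\{\pm1,\pm s\}$ for an eigenvalue $\mu=1$ or $\mu=2\cos\frac{\pi}{5}$ attained exactly six times, verifying by direct computation. The case $k\in\{0,N/2\}$ with $\mu\neq0$ also needs a short check.

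The main obstacle is the fiber-size step: ruling out four values of $k$ (two sign classes) realizing the same angle multiset while all three multisets of $\mathcal R(c)$ are also used. I would first try the equal-denominator clause of Proposition~\ref{prop:three-reps}, since it is exactly what controls the swapped classes.
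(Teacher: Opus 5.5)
Your setup matches the paper's. You reduce to Proposition~\ref{prop:three-reps}, normalize $s_1$ to a unit, count $\pm k$ classes, and note that a swap forces equal reduced denominators. The problem is that the argument stops exactly where the real work is.

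Your first target (every fiber of $k\mapsto\{\alpha_k,\beta_k\}$ has size at most $2$) is false. So is your first alternative (two distinct $\pm$-classes never share a multiset). In the paper's extremal example $N=30$, $S=\{\pm1,\pm3\}$, $\mu=1$, both classes $\{3,27\}$ and $\{9,21\}$ give the pair $\{\frac{\pi}{5},\frac{3\pi}{5}\}$. What the equal-denominator clause actually gives is this: only the equal-denominator pair of $\mathcal R(c)$ (there is at most one) can be hit by two classes, and every other pair by at most one. With $|\mathcal R(c)|\le 3$ this allows $4$ classes, i.e.\ $\operatorname{mult}(\mu)\le 8$, not $6$.

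The missing step is to exclude the configuration where $|\mathcal R(c)|=3$, all three pairs occur, and the equal-denominator pair occurs twice. Your phrase ``$\mathcal R(c)$ then has fewer elements in use'' names this case but gives no way to rule it out.

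The paper does it with the explicit triples of Proposition~\ref{prop:three-reps}. Besides the equal-denominator pair, each triple contains pairs of reduced-denominator types $(1,3)$ and $(2,3)$ when $c=\pm\frac12$, and $(2,5)$ and $(3,5)$ otherwise. With $s_1=1$, a pair of type $(2,3)$, $(2,5)$ or $(3,5)$ can never equal $\Theta(k)$. The reason is that the order of $ks_2$ divides the order of $k$, so the denominator of the angle coming from $ks_2$ divides that of the angle coming from $k$. The paper phrases this as an impossible congruence modulo $2mn$. Hence some pair of $\mathcal R(c)$ is always unused, which gives at most $3$ classes and $\operatorname{mult}(\mu)\le 6$.

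You also still owe an explicit example for optimality. $N=30$, $S=\{\pm1,\pm3\}$ works: $\mu_k=1$ exactly for $k\in\{3,9,10,20,21,27\}$.
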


\begin{proof}
The bound $6$ is optimal. Take $N=30$ and $S=\{\pm 1,\pm 3\}$. Then $\mu_k
=2\cos\frac{k\pi}{15}+2\cos\frac{k\pi}{5}$. And we have $\mu_k=1$ for $k\in\{3,9,10,20,21,27\}$. Hence $\operatorname{mult}(1)=6$.

Now we prove the bound. Choose a unit $u\in S$. Multiplication by $u^{-1}$ is an automorphism of $\mathbb Z/N\mathbb Z$,
so we may assume $S=\{\pm1,\pm a\}$. Hence
\[
\mu_k=2\cos\frac{2k\pi}{N}+2\cos\frac{2ak\pi}{N}
\qquad (k=0,1,2,\dots,N-1).
\]
Fix a nonzero eigenvalue $\mu$ and set $c := \mu/2 \neq 0$. For each $0 \le k \le N/2$, we define
 $b(k) := \min(\overline{ak}, N - \overline{ak})$, where $0\le \overline{ak}:=ak\pmod{N}\le N-1$. It follows that $\frac{2k\pi}{N}, \frac{2\pi\cdot b(k)}{N} \in [0, \pi] \cap \mathbb{Q}\pi$ for all $k \in \{0, 1, \dots, N/2\}$. Define the set
\[
K := \Bigl\{ k \in \{0, 1, \dots, N/2\} : \mu_k=\cos\frac{2k\pi}{N} + \cos\frac{2\pi\cdot b(k)}{N} = c \Bigr\}.
\]
Since $\mu_k=\mu_{N-k}$, we have $\operatorname{mult}(\mu)\le 2|K|$. Thus it is enough to prove $|K|\le3$. Suppose for contradiction that $|K|\ge 4$.

For $k\in K$, we have $\Theta(k):=\{\frac{2k\pi}{N},\frac{2\pi\cdot b(k)}{N}\}\in {\mathcal R}(c)$, where $\mathcal{R}(c)$ is defined in Proposition \ref{prop:three-reps}. We claim that if $\Theta(k)=\Theta(\ell)$ for distinct $k,\ell\in K$, then the two angles in $\Theta(k)$
have the same reduced denominator. Indeed, $\Theta(k)=\Theta(\ell)$ and $k\neq \ell$ imply $b(k)=\ell$ and $b(\ell)=k$, so $\gcd(\ell,N)=\gcd(ak,N)\ge \gcd(k,N)$ and 
$\gcd(k,N)=\gcd(a\ell,N)\ge \gcd(\ell,N)$. Thus $\gcd(k,N)=\gcd(\ell,N)$. Therefore, any pair in $\mathcal R(c)$ whose two angles have different reduced
denominators can arise from at most one $k\in K$, while a pair with equal reduced denominators can
arise from at most two since $|\Theta(k)|\le 2$.

  Fixed $c\neq0$, at most one pair in
$\mathcal R(c)$ has equal reduced denominators by Proposition~\ref{prop:three-reps}. Theorefore, at most one pair in $\mathcal{R}(c)$ has a two-element fiber under $k \mapsto \Theta(k)$, while all other fibers have size at most one. Note that $|\mathcal{R}(c)| \le 3$ by Proposition~\ref{prop:three-reps}. Consequently, if $|K| \ge 4$, then necessarily $|\mathcal{R}(c)| = 3$, every pair in $\mathcal{R}(c)$ occurs as $\Theta(k)$ for some $k$, and the unique equal-denominator pair occurs exactly twice. By Proposition~\ref{prop:three-reps}, $c\in\{\pm\frac12,\ \pm\cos\frac{\pi}{5},\ \pm\cos\frac{2\pi}{5}\}$ and $\mathcal R(c)$ is one of the six explicit triples listed there. In each such
triple there is exactly one equal-denominator pair; the other two are of denominator-type
$(1,3)$ and $(2,3)$ when $c=\pm\frac12$, and of denominator-type $(2,5)$ and $(3,5)$
for the four remaining values. Note that pairs of type $(2,3)$, $(2,5)$, and $(3,5)$ are impossible. Indeed, if
\[
\Theta(k)=\Bigl\{\frac{u\pi}{m},\frac{v\pi}{n}\Bigr\}
\qquad\bigl((m,n)\in\{(2,3),(2,5),(3,5)\}\bigr),\qquad \gcd(u,m)=\gcd(v,n)=1,
\]
then either $k=\frac{uN}{2m}$, $b(k)=\frac{vN}{2n}$, or the reverse, and hence
\[
un\,a\equiv \pm vm \pmod{2mn}
\qquad\text{or}\qquad
vm\,a\equiv \pm un \pmod{2mn}.
\]
For $(m,n)=(2,3)$, one congruence has left-hand side divisible by $3$ and right-hand side not,
while the other has left-hand side even and right-hand side odd; for $(2,5)$ the same argument
uses $2$ and $5$, and for $(3,5)$ it uses $3$ and $5$. Thus none of these types can occur. Thus, not every pair in $\mathcal{R}(c)$ has the form $\Theta(k)$, a contradiction. Hence $|K|\le 3$ and $\operatorname{mult}(\mu)\le 2|K|\le 6$.
\end{proof}

\begin{example}\label{ex:N even S=4}
 If we remove the condition that $S$ contains an invertible element, the bound will be larger. For example, when $N=60$ and $S=\{\pm 3,\pm 4\}$ (which still
generates $\mathbb Z/60\mathbb Z$), the nonzero eigenvalue $\mu=-1$ occurs with
multiplicity $8$.
\end{example}

\begin{lemma}\label{lem:supp-rigidity-to-mult}
Let $S \subseteq \mathbb{Z}/N\mathbb{Z}$ be a symmetric generating set that contains a unit, and fix $\mu \neq 0$. Assume further that $S$ satisfies\[\mu_{k_1} = \mu_{k_2} \neq 0 \quad \Longrightarrow \quad k_1S = k_2S .
\]
Then, the multiplicity satisfies $\operatorname{mult}(\mu) \le |S|$.

\end{lemma}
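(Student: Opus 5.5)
The idea is to fix one index $k_1$ with $\mu_{k_1}=\mu$ and inject the whole fiber $\{k:\mu_k=\mu\}$ into $S$. If the fiber is empty there is nothing to prove, so we may pick such a $k_1$. For any $k$ with $\mu_k=\mu\neq 0$, the hypothesis gives $kS=k_1S$ as multisets. Here $kS$ denotes the multiset $\{ks:s\in S\}$, which is how the hypothesis is stated and how it was used in the group-ring identification $\mu_k=\ev_N(kS)$.

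Now choose a unit $u\in S$, which exists by assumption. Since $ku\in kS=k_1S$, there is some $s\in S$ with $ku=k_1s$. Because $u$ is invertible modulo $N$, this gives $k=k_1 s u^{-1}$. Hence the assignment $k\mapsto s$ defines a map from the fiber to $S$. We choose $s$ for each $k$ (any choice will do), and the map is injective because $s$ determines $k$ via $k=k_1su^{-1}$. Thus
\[
\operatorname{mult}(\mu)=|\{k\in\Z/N\Z:\mu_k=\mu\}|\le |\{k_1su^{-1}:s\in S\}|\le |S|.
\]

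No real obstacle is expected. The only point needing care is that $kS=k_1S$ is an equality of multisets (equivalently of elements of $\N[G]$), so membership $ku\in k_1S$ makes sense, and $|S|$ counts elements of the set $S$. The unit is essential, since without it the equation $ku=k_1s$ need not determine $k$ uniquely. Note also that symmetry and the generating property of $S$ are not needed for this step.
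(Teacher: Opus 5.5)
Your proposal is correct and follows essentially the same argument as the paper. Both fix one index in the fiber, use the hypothesis to get $kS=k_1S$, and then use the unit in $S$ to recover $k$ from an element of $S$ (the paper writes $l\equiv j_1^{-1}y$ with $y\in kS$), so the fiber has at most $|S|$ elements.
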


\begin{proof}
Fix an integer $k$ such that $0\le k<N$ and $\mu_k=\mu$. By assumption, we have $\operatorname{mult}(\mu)=|\{{0\le l<N }|{\text{ }lS=kS\text{ as multisets}}\}|$. We may assume that $j_1\in S$ is a unit. Let $0\le l<N$ such that $lS=kS$. Then we have $lj_1\in kS$. Hence $lj_1\equiv y\mod N$ for some $y\in kS$. Then $l\equiv (j_1)^{-1}y\mod N$ for some $y\in kS$. Since $0\le l<N$ and $|kS|\le |S|$, there are at most $|S|$ choices of $y$ and hence $|S|$ choices of $l$. Hence $\operatorname{mult}(\mu)\le |S|$.    
\end{proof}
\begin{theorem}\label{thm:mleS}
Let $N$ be an odd number whose smallest prime factor is $p$, and let $S\subset \Z/N\Z$ be a symmetric generating set with $|S|\le 2p$.
Assume $S$ contains a unit. Then for every nonzero eigenvalue $\mu$, $\operatorname{mult}(\mu)\le |S|$. The bound is optimal.
\end{theorem}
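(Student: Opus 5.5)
The plan is to reduce everything to Lemma~\ref{lem:supp-rigidity-to-mult}. It therefore suffices to verify its hypothesis: for $k_1,k_2\in\Z/N\Z$,
\[
\mu_{k_1}=\mu_{k_2}\neq 0 \implies k_1S=k_2S
\]
as multisets.

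Write $X:=\sigma(k_1S)$ and $Y:=\sigma(k_2S)$ in $\N[G]$, with $G=\Z/N\Z$. Since $S$ is symmetric, each dilate $kS$ is a symmetric multiset. Moreover $\eps(X)=\eps(Y)=|S|\le 2p$, and
\[
\ev_N(X)=\mu_{k_1}=\mu_{k_2}=\ev_N(Y)\neq 0.
\]
Because $N$ is odd, Theorem~\ref{thm:rig}(2) applies directly and gives $X=Y$, that is, $k_1S=k_2S$. Lemma~\ref{lem:supp-rigidity-to-mult} then yields $\operatorname{mult}(\mu)\le |S|$ for every nonzero eigenvalue $\mu$.

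The only subtlety I would double-check is that Lemma~\ref{lem:supp-rigidity-to-mult} counts the $l$ with $lS=kS$ correctly. That count rests on $S$ containing a unit $j_1$, so that $l$ is determined by the element $lj_1\in kS$. This is already handled in the lemma.

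For optimality, I need an odd $N$ and a symmetric $S$ with $|S|\le 2p$, containing a unit, for which some nonzero eigenvalue has multiplicity exactly $|S|$. The natural candidate is a subgroup-type generating set. Take $N=q$ prime and $S\le(\Z/q\Z)^\times$ the subgroup of even order $2t$, so that $S=-S$. Then
\[
\mu_k=\sum_{s\in S}\zeta_q^{ks}
\]
depends only on the coset $kS$, and all $k$ in the same coset give the same value, so the multiplicity is at least $|S|$. Since $p=q$, we have $|S|\le q-1<2p$. Concretely, $N=7$ with $S=\{\pm1\}$ gives $\mu_k=2\cos(2\pi k/7)$, which takes each nonzero value exactly twice. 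A better example uses $S=\{\pm1,\pm 2,\pm 4\}=(\Z/7\Z)^\times$, where $\mu_k=-1$ for all $k\neq 0$, giving multiplicity $6=|S|$.

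The main obstacle is whether "optimal" should be witnessed for every admissible $|S|$, or merely by one example. I would present the general subgroup construction: for any odd $N$ and $S\le U_N$ symmetric, $kS=k'S$ whenever $k'\in kS$. When $\mu_1\ne 0$, this gives exactly $|S|$ indices with eigenvalue $\mu_1$, by the upper bound just proved. For instance, $N=p$ prime and $S=\{\pm1\}$ has $\mu_1=2\cos(2\pi/p)\neq0$ with multiplicity $2=|S|$. The $\Z/7\Z$ example additionally attains $|S|=6$.
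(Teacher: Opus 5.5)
Your proof of the bound is the same as the paper's. Theorem~\ref{thm:rig}(2), applied to the symmetric multisets $k_1S$ and $k_2S$ of weight $|S|\le 2p$, verifies the hypothesis of Lemma~\ref{lem:supp-rigidity-to-mult}, and that lemma gives $\operatorname{mult}(\mu)\le|S|$.

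Where you differ is the optimality witness.

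\textbf{The paper's witness.} It takes $N=p^aq^b$ and $S=\{\pm1,\pm m,\dots,\pm m^{p-1}\}$ with $m$ of order $p$ in $U_N$. Then $kS=S$ for the $2p$ indices $k\in S$. This aims at equality at the extremal size $|S|=2p$ with $N$ composite, which is the case that genuinely needs Theorem~\ref{thm:rig}(2).

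\textbf{Your witnesses.} You use $N=7$ with $S=\{\pm1\}$ or $S=U_7$. These are simpler, you check $\mu_1\neq0$ explicitly, and they do show that the bound $|S|$ cannot be lowered in general. But they sit in the case where the statement is elementary. For prime $N$ the hypotheses $|S|\le 2p$ and ``$S$ contains a unit'' hold automatically, and Lemma~\ref{lem:one prime} alone forces $k_1S=k_2S$ from equal weights and equal evaluations. They also never reach $|S|=2p$.

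\textbf{Reaching $|S|=2p$.} If you want that stronger reading, your own subgroup remark supplies it, once restricted to $|S|\le 2p$ (which your ``exactly $|S|$ indices'' claim needs anyway). Take $N=21$ and $S=\{\pm1,\pm4,\pm16\}$. Its six elements are pairwise distinct mod $7$, so $S\neq A(u;7)$ for any $u$. Hence $\mu_1\neq0$ by Theorem~\ref{thm:weight-2p1}, and $\operatorname{mult}(\mu_1)=6=2p$.

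This nonvanishing check is not automatic in the paper's recipe either. For $N=45$ and $m=16=1+N/3$, one gets $S=A(1;15)$, so $\mu_1=0$ and the example witnesses nothing about nonzero eigenvalues.

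A minor slip: for $S=\{\pm1\}$ in $\Z/7\Z$, the value $\mu_0=2$ occurs only once, not twice.
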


\begin{proof}
   By Theorem \ref{thm:rig}, the set $S$ satisfies the conditions in  Lemma \ref{lem:supp-rigidity-to-mult}. Then the result follows from Lemma \ref{lem:supp-rigidity-to-mult}.

   The bound is optimal. Let $N=p^aq^b$ where $p<q$ are two distinct odd primes and $a,b$ are two positive integers. We assume that $a\ge 2$ or $q\equiv 1 \mod p$, then the congruence $x^p\equiv 1 \mod N$ has non-trivial solutions. Let $m$ be a non-trivial solution and $S=\{\pm 1,\pm m,\pm m^2,\dots,\pm m^{p-1}\}$. Then we have $S=mS=\cdots m^{p-1} S$. Hence $\operatorname{mult}(\mu_1)\ge 2p$. This implies that the bound  is optimal.
\end{proof}

The condition that $S$ contains a unit cannot be removed
 \begin{example}\label{example:N/p,N/q}
The condition that $S$ contains a unit cannot be removed. Let $p < q$ be two odd prime numbers and $N = pq$. Let $S = \{p, 2p, \dots, (q-1)p, q, 2q, \dots, (p-1)q\}$. Since $\gcd(p,q) = 1$, the set $S$ is a symmetric generating set, and $S$ does not contain any unit. For any $1 \le k \le N-1$ with $\gcd(k, pq) = 1$, we have $\mu_k = \sum\limits_{j=1}^{q-1} \zeta_q^{kj} + \sum\limits_{j=1}^{p-1} \zeta_p^{kj} = -2$. Thus, the multiplicity $\operatorname{mult}(-2) \ge (p-1)(q-1)$, which is not bounded by $|S| = p+q-2$.

\end{example}

\begin{example}
    The zero eigenvalue can have a large multiplicity
even when the generating set  $S$ is small. Let $N$ be an interger whose smallest prime factor is $p$ and take $S=A\left(1;\frac{N}{p}\right)$.  Then $|S|=2p$.  But for every $k$ with $p\nmid k$, we have  $\mu_k=\ev_N(kS)=0$. Thus the zero eigenvalue has multiplicity $\operatorname{mult}(0)$ at least $N-N/p$, which is not bounded by $|S|$.
\end{example}

 \begin{cor}\label{cor:mu1-zero-or-irrational}
Let $N$ be an odd composite number, let $p$ be the smallest prime factor of $N$, and let
$S=\{\pm s_1,\pm s_2,\dots,\pm s_t\}\subset \mathbb Z/N\mathbb Z$ be a symmetric generating set such that $S$ contains a unit
and $|S|\le 2p$. Then $\mu_1=\sum\limits_{j=1}^t 2\cos(2\pi s_j/N)$ is either $0$ or irrational. Moreover, $\mu_1=0$ if and only if  $|S|=2p$ and $S=A(u;N/p)$ for some integer $u$.
\end{cor}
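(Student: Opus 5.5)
The plan is to combine the Galois structure of rational eigenvalues from Proposition~\ref{prop:mr exact} with the multiplicity bound of Theorem~\ref{thm:mleS}. Then the zero case is handled separately by Theorem~\ref{thm:zero-2p1}.

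\emph{Irrationality.} Suppose $\mu_1=r\in\Q$ with $r\neq 0$. Since $\gcd(1,N)=1$, the divisor $d=1$ appears in the sum of Proposition~\ref{prop:mr exact}, so $\operatorname{mult}(r)\ge\varphi(N)$. On the other hand, Theorem~\ref{thm:mleS} gives $\operatorname{mult}(r)\le |S|\le 2p$. So it suffices to show $\varphi(N)>2p$ for every odd composite $N$ with smallest prime $p$. Write $N=p\,m$ with $m>1$ odd, so every prime factor of $m$ is at least $p$. We split into two cases.
\begin{itemize}
\item If $p\mid m$, then $\varphi(N)\ge\varphi(p^2)=p(p-1)\ge 2p\cdot\frac{p-1}{2}$. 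This is $>2p$ once $p\ge 5$. For $p=3$ we get $\varphi(9)=6=2p$, so the borderline $N=9$ (and, more generally, any case where the bounds could tie) must be examined directly.
\item If $N$ has another prime $q>p$, then $\varphi(N)\ge (p-1)(q-1)\ge (p-1)(p+1)$, which is $>2p$ for $p\ge 3$.
\end{itemize}
For the tie at $N=9$, I would look at $|S|\le 6$ with a unit. The values at $k$ coprime to $9$ are the Galois conjugates of $\mu_1$; if $\mu_1$ were rational, all six would be equal. By the rigidity of Theorem~\ref{thm:rig}(2), this forces $kS=S$ for all units $k$. Then $S\supseteq (\Z/9\Z)^\times$ has size $6$ and $\mu_1=\sum_{\gcd(j,9)=1}\zeta_9^j=0$, which is not a nonzero rational. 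This step will likely need the most care, and the same rigidity argument handles any other tie case uniformly: rationality means $\mu_k=\mu_1\neq0$ for all $k\in G^\times$, so $kS=S$ for all units. Since $S$ contains a unit, $S\supseteq G^\times$, and hence $|S|\ge\varphi(N)>2p$ apart from $N=9$, where $\mu_1=0$. In fact this uniform argument could replace the counting entirely.

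\emph{Zero case.} If $\mu_1=0$, write $S=\sym(Y)$ using Remark~\ref{rem:symmetrized-odd} ($N$ odd, $|S|$ even). Then $\ev_N(S)=0$ gives a vanishing sum of weight $|S|\le 2p$. By Theorem~\ref{thm:LamLeung}(3), the weight lies in $\N p+\cdots$ and is even, so $|S|=2p$ because $p$ is odd. Theorem~\ref{thm:weight-2p1} then gives $S=A(u;N/p)$. The converse holds because $\ev_N(A(u;N/p))=0$.
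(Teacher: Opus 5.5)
Your proof is correct and follows the paper's argument. Proposition~\ref{prop:mr exact} and Theorem~\ref{thm:mleS} force $\varphi(N)\le 2p$, and the same split on the factorization of $N$ rules this out except at $N=9$; the zero case is Theorem~\ref{thm:LamLeung}(3) plus Theorem~\ref{thm:weight-2p1}, where the paper cites the equivalent Theorem~\ref{thm:zero-2p1}. The one real difference is at $N=9$: the paper only says ``direct computation'', whereas you use Theorem~\ref{thm:rig}(2) to force $kS=S$ for every unit $k$, hence $S=(\Z/9\Z)^\times$ and $\mu_1=\mu(9)=0$, which is a cleaner and complete justification.
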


\begin{proof}
Assume  that $\mu_1\in \mathbb Q$ and $\mu_1\neq 0$. By Proposition \ref{prop:mr exact} and Theorem~\ref{thm:mleS}, we have $\varphi(N)\le\operatorname{mult}(\mu_1)\le 2p$. If $N$ has at least two distinct prime
factors $p<p_2$, then
\[
\varphi(N)\ge (p-1)(p_2-1)\ge (p-1)(p+1)=p^2-1>2p,
\]
a contradiction. If $N\ne 9$ and $N=p^r$ with $r\ge 2$, then $\varphi(N)=p^{\,r-1}(p-1)>2p$, a contradiction. The remaining case $N=9$ can be  excluded  by direct computation.  Hence $\mu_1$
is either $0$ or irrational.

Now assume that $\mu_1=0$. Theorem \ref{thm:LamLeung}(3) and Theorem \ref{thm:zero-2p1} imply that $S=A(u;N/p_1)$ for some integer $u$.
\end{proof}

\begin{example}
  In the above corollary, the condition that $N$ is a composite number is necessary. Let $N=\ell$ be an odd prime number. Let $S=(\Z/\ell\Z)\setminus\{0\}$. Then $S$ is symmetric, contains a unit, and satisfies $|S|=\ell-1<2\ell$. However, $\mu_1=-1\in\Q$.
\end{example}

\subsection{\texorpdfstring{$N$ square-free with $S\le(\mathbb{Z}/N\mathbb{Z})^\times$}{N square-free with S <= (Z/NZ)*}}\label{sec:squarefree subgroup}

In this subsection, we use Galois theory to compute the eigenvalue multiplicities of the graph $(\mathbb{Z}/N\mathbb{Z}, S)$, where $N$ is square-free and $S \le (\mathbb{Z}/N\mathbb{Z})^\times$ is a subgroup.

We recall some facts from Galois theory.

\begin{definition}\label{def:normal basis}
	Let $L/K$ be a finite Galois extension with Galois group $G=\Gal(L/K)$. A basis for $L$ as a $K$-vector space is called a normal basis if it is of the form $\{\sigma(\alpha):\sigma\in G\}$ for some $\alpha\in L$.
\end{definition}

\begin{example}\label{ex:normal basis}
	Let $p$ be a prime, the basis $\{1,\zeta_p,\dots,\zeta_p^{p-2}\}$ of $\mathbb{Q}(\zeta_p)/\mathbb{Q}$ is not a normal basis since $1$ is not conjugate to the rest of the basis, but $\{\zeta_p,\zeta_p^2,\dots,\zeta_p^{p-1}\}$ is a normal basis.
\end{example}

In general, we have the following fact about normal bases of cyclotomic fields.

\begin{proposition}\label{prop:normal basis cyclo}
	The primitive $n$-th roots of unity form a normal basis for $\mathbb{Q}(\zeta_n)/\mathbb{Q}$ if and only if $n$ is square-free.
\end{proposition}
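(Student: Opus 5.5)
The plan is to reduce the statement to linear independence, and then treat the two directions separately. The set $P_n$ of primitive $n$-th roots of unity is exactly the Galois orbit $\{\sigma_a(\zeta_n): a\in(\Z/n\Z)^\times\}$ of $\zeta_n$. Since $\Gal(\Q(\zeta_n)/\Q)\cong(\Z/n\Z)^\times$ acts simply transitively on $P_n$, this orbit has exactly $\varphi(n)=[\Q(\zeta_n):\Q]$ elements. So $P_n$ is a normal basis if and only if its elements are $\Q$-linearly independent. It therefore suffices to show that $P_n$ is linearly dependent when $n$ is not square-free, and independent when it is.

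\emph{Necessity.} Suppose $p^2\mid n$ for some prime $p$. I will exhibit an explicit vanishing relation among the elements of $P_n$. Write $n=pm$ with $p\mid m$. For any $\xi\in P_n$, the translates $\xi\zeta_p^j$ for $j=0,\dots,p-1$ are again primitive $n$-th roots of unity. Indeed, $\xi\zeta_p^j=\zeta_n^{a+jm}$, and $\gcd(a+jm,n)=1$ because every prime divisor of $n$ divides $m$. These $p$ translates are pairwise distinct elements of $P_n$, and they sum to $\xi R_p=0$ (with $R_p$ as in Example~\ref{example:vanishing sum}). This is a nontrivial $\Q$-linear relation, so $P_n$ is not a basis. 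Alternatively, $\sum_{\xi\in P_n}\xi=\mu(n)=0$ by Möbius inversion of $\sum_{d\mid n}\sum_{\xi\in P_d}\xi=0$ for $n>1$, which gives the same conclusion.

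\emph{Sufficiency.} I will argue by induction on the number of prime factors of the square-free integer $n$. The cases $n=1,2$ are trivial, since $P_1=\{1\}$ and $P_2=\{-1\}$ both span $\Q$. For an odd prime $p$, the relation $1=-(\zeta_p+\cdots+\zeta_p^{p-1})$ shows that $P_p$ spans the power basis $\{1,\zeta_p,\dots,\zeta_p^{p-2}\}$. Since $P_p$ has $p-1$ elements, it is a basis; this is Example~\ref{ex:normal basis}. For the inductive step, write $n=mk$ with $\gcd(m,k)=1$. I will use three facts:
\begin{itemize}[label=(\roman*)]
\item $\Q(\zeta_m)\cap\Q(\zeta_k)=\Q$ and $\Q(\zeta_m)\Q(\zeta_k)=\Q(\zeta_n)$, so by the degree count $\varphi(n)=\varphi(m)\varphi(k)$ the multiplication map $\Q(\zeta_m)\otimes_\Q\Q(\zeta_k)\to\Q(\zeta_n)$ is an isomorphism.
\item By the Chinese remainder theorem, $P_n=\{\xi\eta:\xi\in P_m,\ \eta\in P_k\}$, and this product parametrization is bijective.
\item If $\{\xi\}$ is a basis of $\Q(\zeta_m)$ and $\{\eta\}$ is a basis of $\Q(\zeta_k)$, then under the isomorphism in (i) the products $\{\xi\eta\}$ form a basis of $\Q(\zeta_n)$.
\end{itemize}
Combining these with the inductive hypothesis for $m$ and $k$ shows that $P_n$ is a basis of $\Q(\zeta_n)$.

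The only step needing real input is (i), the linear disjointness of the two cyclotomic fields. I expect to derive it from the multiplicativity $\varphi(mk)=\varphi(m)\varphi(k)$, together with $[\Q(\zeta_n):\Q]=\varphi(n)$. Those two facts imply that the surjective multiplication map from the tensor product onto the compositum is injective by a dimension count, so (i) should go through without difficulty.
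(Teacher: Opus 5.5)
Your proof is correct, and it differs from the paper's mainly in being self-contained. For sufficiency the paper simply cites Lenstra's Lemma (2.11). You instead prove it by reducing to the prime case through the Chinese remainder theorem. The bijection $P_m\times P_k\to P_n$, $(\xi,\eta)\mapsto\xi\eta$, combines with the isomorphism $\Q(\zeta_m)\otimes_\Q\Q(\zeta_k)\cong\Q(\zeta_n)$ to show that products of bases are bases. That isomorphism is surjective because the image contains every power of $\zeta_n$, and injective by the count $\varphi(m)\varphi(k)=\varphi(n)$. The prime case then follows from $1=-(\zeta_p+\cdots+\zeta_p^{p-1})$.

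For necessity, the paper's argument is exactly your ``alternative'': the Ramanujan-sum identity $\sum_{\gcd(k,n)=1}\zeta_n^k=\mu(n)=0$, quoted from Hardy--Wright. Your main argument is finer. When $p^2\mid n$, you exhibit a vanishing subsum of weight $p$, namely the full set $\{\xi\zeta_p^j:0\le j<p\}$ of primitive roots. In the paper's group-ring language this is a translate $a+C_p$ lying entirely inside $(\Z/n\Z)^\times$, and the argument needs nothing beyond $R_p=0$.

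You also make explicit a step the paper leaves implicit: $P_n$ is a single Galois orbit of size $\varphi(n)=[\Q(\zeta_n):\Q]$, so being a normal basis is equivalent to linear independence. The paper's route buys brevity. Yours gives a complete argument and shows where square-freeness enters: the prime-power factor $p^2$ is exactly where the coset relation appears.
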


\begin{proof}
For the sufficiency, see \cite[Lemma (2.11)]{lenstra1978vanishing}. For the necessity, suppose that $n$ is not square-free and consider the sum of all primitive $n$-th roots of unity. By \cite[(16.6.4)]{hardy2008}, we have
\begin{equation}\label{eq:ram}
    \mu(n) = \sum_{\substack{1 \le k \le n \\ \gcd(k,n)=1}} \zeta_{n}^{k},
\end{equation}
where $\mu(n)$ is the Möbius function. Since $n$ is not square-free, we have $\mu(n) = 0$. Hence, the set of primitive $n$-th roots of unity is linearly dependent, and thus cannot form a basis for $\mathbb{Q}(\zeta_n)/\mathbb{Q}$. 
\end{proof}

Using a normal basis, we can find a primitive element for every intermediate extension in a finite Galois extension.

\begin{proposition}\label{prop:normal basis}
	Let $L/K$ be a finite Galois extension with Galois group $G$ and let $\{\sigma(\alpha)|\sigma\in G\}$ be a normal basis. For a subgroup $H\subset G$, we have $L^{H}=K(\alpha_H)$ where $\alpha_H=\sum\limits_{\tau\in H}\tau(\alpha)$.
\end{proposition}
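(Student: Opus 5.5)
The plan is to show the two inclusions $K(\alpha_H)\subseteq L^H$ and $L^H\subseteq K(\alpha_H)$. For the second, we show that any $\sigma\in G$ fixing $\alpha_H$ lies in $H$. By the Galois correspondence, $K(\alpha_H)=L^{H'}$ where $H'=\{\sigma\in G:\sigma(\alpha_H)=\alpha_H\}$. So the statement reduces to proving $H'=H$.

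The inclusion $H\subseteq H'$ is immediate. For $\rho\in H$ we have
\[
\rho(\alpha_H)=\sum_{\tau\in H}\rho\tau(\alpha)=\sum_{\tau'\in H}\tau'(\alpha)=\alpha_H,
\]
because $\tau\mapsto\rho\tau$ is a bijection of $H$. This gives $K(\alpha_H)\subseteq L^H$.

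For the reverse inclusion, let $\sigma\in G$ satisfy $\sigma(\alpha_H)=\alpha_H$. Then
\[
\sum_{\tau\in H}\sigma\tau(\alpha)=\sum_{\tau\in H}\tau(\alpha).
\]
The elements $\{g(\alpha):g\in G\}$ are pairwise distinct and $K$-linearly independent, since they form a normal basis. Both sides of the identity are therefore $0/1$ combinations of basis vectors, with supports $\sigma H$ and $H$ respectively. Linear independence forces the supports to agree, so $\sigma H=H$ as subsets of $G$, and hence $\sigma=\sigma\cdot 1\in H$. Thus $H'=H$, and the Galois correspondence yields $L^H=L^{H'}=K(\alpha_H)$.

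The main point needing care is the implicit fact that the $|G|$ conjugates $g(\alpha)$ are distinct. This holds because they form a basis of size $[L:K]=|G|$, and it is exactly what lets us read off the equality of index sets $\sigma H=H$ from the coefficient comparison. Everything else is routine Galois correspondence.
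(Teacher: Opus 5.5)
Your proposal is correct and follows essentially the same argument as the paper: reduce via the Galois correspondence to showing $\Gal(L/K(\alpha_H))=H$, get $H\subseteq\Gal(L/K(\alpha_H))$ by reindexing, and get the reverse inclusion by using linear independence of the normal basis to conclude $\sigma H=H$ and hence $\sigma\in H$. You also make explicit that the conjugates $g(\alpha)$ are pairwise distinct, a point the paper's proof uses without stating.
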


\begin{proof}
	Since $\alpha_H\in L^H$, we have $K(\alpha_H)\subset L^H$. By the Galois
	correspondence, it suffices to show that
  $\Gal(L/K(\alpha_H))=H$. If $\sigma\in H$, then $\sigma(\alpha_H)=\sum\limits_{\tau\in H}\sigma\tau(\alpha)=\sum\limits_{\tau\in H}\tau(\alpha)=\alpha_H$, so $H\subset \Gal(L/K(\alpha_H))$.
	
	Conversely, let $\sigma\in \Gal(L/K(\alpha_H))$. Then $\sigma(\alpha_H)=\alpha_H$, so $\sum\limits_{\tau\in H}(\sigma\tau)(\alpha)=\sum\limits_{\tau\in H}\tau(\alpha)$. Since $\{\gamma(\alpha):\  \gamma\in G\}$ is a basis of $L$ over $K$, these two
	sums can be equal only if they involve the same basis elements. Hence $\sigma H=H$. Because $1\in H$, this implies $\sigma=\sigma\cdot 1\in \sigma H=H$. Therefore
	$\Gal(L/K(\alpha_H))\subset H$. So $\Gal(L/K(\alpha_H))=H$, and the Galois correspondence yields $K(\alpha_H)=L^H$.	
\end{proof}

A direct consequence of Propositions \ref{prop:normal basis cyclo} and \ref{prop:normal basis} is the following result:

\begin{cor}\label{cor:normal basis}
	Let $N$ be a positive square-free integer. Let $H\subset \Gal(\mathbb{Q}(\zeta_N)/\mathbb{Q})=(\mathbb{Z}/N\mathbb{Z})^\times$ be a subgroup. Then we have $\mathbb{Q}(\zeta_N)^{H}=\mathbb{Q}(\eta)$ where $\eta=\sum\limits_{s\in H}\zeta_N^s$.
\end{cor}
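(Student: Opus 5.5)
The plan is to apply Proposition~\ref{prop:normal basis} directly, with $L=\mathbb{Q}(\zeta_N)$, $K=\mathbb{Q}$, and $G=\Gal(\mathbb{Q}(\zeta_N)/\mathbb{Q})$.

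First I fix the identification $G\cong(\mathbb{Z}/N\mathbb{Z})^\times$, in which $s\in(\mathbb{Z}/N\mathbb{Z})^\times$ corresponds to the automorphism $\sigma_s:\zeta_N\mapsto\zeta_N^s$. Under this identification the orbit $\{\sigma(\zeta_N):\sigma\in G\}$ is exactly the set of primitive $N$-th roots of unity $\{\zeta_N^s:s\in(\mathbb{Z}/N\mathbb{Z})^\times\}$. Distinct $s$ give distinct $\zeta_N^s$, so the orbit has $|G|=\varphi(N)$ elements.

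Since $N$ is square-free, Proposition~\ref{prop:normal basis cyclo} says these primitive roots form a basis of $L/\mathbb{Q}$. Hence $\{\sigma(\alpha):\sigma\in G\}$ is a normal basis with $\alpha=\zeta_N$.

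Now take a subgroup $H\le G$, viewed as a subgroup of $(\mathbb{Z}/N\mathbb{Z})^\times$. Proposition~\ref{prop:normal basis} gives $L^H=\mathbb{Q}(\alpha_H)$, where
\[
\alpha_H=\sum_{\tau\in H}\tau(\zeta_N)=\sum_{s\in H}\zeta_N^s=\eta .
\]
This is exactly the claim.

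There is essentially no obstacle here. The only point needing care is that the normal-basis property requires the orbit elements to be indexed bijectively by $G$, so that a basis indexed by $G$ makes sense. This holds because $s\mapsto\zeta_N^s$ is injective on $(\mathbb{Z}/N\mathbb{Z})^\times$, as noted above.
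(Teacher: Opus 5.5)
Your proposal is correct and is exactly the paper's argument: the paper states this corollary as a direct consequence of Proposition~\ref{prop:normal basis cyclo}, which gives the normal basis $\{\zeta_N^s : s\in(\mathbb{Z}/N\mathbb{Z})^\times\}$ with $\alpha=\zeta_N$, and Proposition~\ref{prop:normal basis}, which gives $L^H=\mathbb{Q}(\alpha_H)$ with $\alpha_H=\eta$. Your check that $s\mapsto\zeta_N^s$ is injective on $(\mathbb{Z}/N\mathbb{Z})^\times$ is a harmless extra detail that the paper leaves implicit.
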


From now on, let $N$ be a square-free integer and let $U_N:=(\mathbb Z/N\mathbb Z)^\times$. Let $S\le U_N$ be a subgroup. The eigenvalues of the Cayley graph $(\mathbb Z/N\mathbb Z,S)$ are $\mu_k=\sum\limits_{s\in S}\zeta_N^{ks}$ where $ k=0,1,\dots, N-1$. Here we do not assume that $S$ is symmetric; hence, the Cayley graph $(G,S)$ may be directed, and the eigenvalues $\mu_k$ need not be sums of cosines.

For each divisor $m\mid N$, let $\pi_m:U_N\to U_m:=(\mathbb Z/m\mathbb Z)^\times$ be reduction modulo $m$, and let $S_m:=\pi_m(S)\le U_m$. We use the convention $U_1=S_1=\{1\}$ and $\zeta_1=1$.

\begin{definition}\label{def:gaussian-period}
	Let $m$ be a positive integer and let $H\le U_m$. For $c\in U_m$, the sum
	\[
	\eta_{m,H}(c):=\sum_{h\in H}\zeta_m^{ch}
	\]
	is called the Gaussian period of modulus $m$ attached to the coset $cH$.
\end{definition}

For $c\in U_m$, define
\[
\eta_{m,c}:=\eta_{m,S_m}(c)=\sum_{u\in cS_m}\zeta_m^u,
\qquad
\Lambda_{m,c}:=\frac{|S|}{|S_m|}\eta_{m,c}.
\]
These quantities depend only on the coset $cS_m\in U_m/S_m$. For each divisor $m\mid N$,
we decompose the indices $k\in \Z/N\Z$ into layers
\[
\Omega_m:=\left\{k\in \mathbb Z/N\mathbb Z:\frac{N}{\gcd(k,N)}=m\right\}
\]
and $\Omega_1=\{0\}$. For an eigenvalue $\lambda$ and each divisor $m\mid N$, we define the $m$-layer multiplicity of $\lambda$ by $\operatorname{mult}_m(\lambda):=
\left|\left\{k\in \Omega_m:\mu_k=\lambda\right\}\right|$
. The following theorem shows that the eigenvalues of the Cayley graph are exactly scaled Gaussian periods $\Lambda_{m,c}$.

\begin{theorem}\label{thm:squarefree-subgroup-spectrum}
	Let $N$ be square-free and let $S\le U_N$ be a subgroup.
	
	\begin{enumerate}[label=(\arabic*)]
		
		\item If $k\in \Omega_m$, then $k=(N/m)\ell$ for a unique
	$\ell\in U_m$ and $\mu_k=\Lambda_{m,\ell}$.
		
		\item For each fixed divisor $m$, the values $\Lambda_{m,c}$ are pairwise distinct as $cS_m$ ranges over $U_m/S_m$. Consequently, the $m$-th layer contributes exactly $[U_m : S_m] = \frac{\varphi(m)}{|S_m|}$ distinct eigenvalues, each having layer multiplicity $|S_m|$ and degree $[\mathbb{Q}(\Lambda_{m,c}) : \mathbb{Q}] = \frac{\varphi(m)}{|S_m|}$ over $\mathbb{Q}$.

	\end{enumerate}
\end{theorem}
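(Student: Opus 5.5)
For part (1) I will compute directly. If $k\in\Omega_m$, then $\gcd(k,N)=N/m$, so $k=(N/m)\ell$ with $\ell$ determined modulo $m$ and $\gcd(\ell,m)=1$. Hence $\ell\in U_m$ is unique. Then
\[
\mu_k=\sum_{s\in S}\zeta_N^{(N/m)\ell s}=\sum_{s\in S}\zeta_m^{\ell\pi_m(s)} .
\]
The map $\pi_m|_S:S\to S_m$ is a surjective group homomorphism, so every fiber has size $|S|/|S_m|$. Therefore $\mu_k=\frac{|S|}{|S_m|}\sum_{h\in S_m}\zeta_m^{\ell h}=\Lambda_{m,\ell}$. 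The case $m=1$ ($k=0$) gives $\mu_0=|S|=\Lambda_{1,1}$, consistent with the conventions.

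For part (2), I will use Galois theory in $\mathbb Q(\zeta_m)$, with $\mathrm{Gal}(\mathbb Q(\zeta_m)/\mathbb Q)\cong U_m$ via $\sigma_a(\zeta_m)=\zeta_m^a$. Since $m\mid N$ and $N$ is square-free, $m$ is square-free. By Proposition~\ref{prop:normal basis cyclo}, the primitive $m$-th roots $\{\zeta_m^a:a\in U_m\}$ form a normal basis. Corollary~\ref{cor:normal basis} applied with $H=S_m$ gives $\mathbb Q(\zeta_m)^{S_m}=\mathbb Q(\eta_{m,1})$, so $[\mathbb Q(\eta_{m,1}):\mathbb Q]=[U_m:S_m]=\varphi(m)/|S_m|$.

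Next note that $\eta_{m,c}=\sigma_c(\eta_{m,1})$. The stabilizer of $\eta_{m,1}$ in $U_m$ is exactly $S_m$, which is the Galois correspondence step already used in Proposition~\ref{prop:normal basis}. Hence the conjugates $\sigma_c(\eta_{m,1})$ are pairwise distinct as $cS_m$ ranges over $U_m/S_m$. Distinctness can also be seen directly: $\eta_{m,c}$ and $\eta_{m,c'}$ are sums over the disjoint subsets $cS_m$ and $c'S_m$ of a basis, so they differ when the cosets differ. Scaling by the nonzero rational $|S|/|S_m|$ preserves both distinctness and degree, so each $\Lambda_{m,c}$ has degree $\varphi(m)/|S_m|$.

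It remains to count layer multiplicities. By (1), the bijection $\Omega_m\leftrightarrow U_m$, $k\leftrightarrow\ell$, sends $k$ to $\Lambda_{m,\ell}$. Since $\Lambda_{m,\ell}$ depends only on the coset $\ell S_m$ and is injective on cosets, exactly $|S_m|$ elements of $\Omega_m$ give each value. Thus there are $\varphi(m)/|S_m|$ distinct values in the layer, each of layer multiplicity $|S_m|$. No step looks like a real obstacle. The only point needing care is that the normal-basis statement requires $m$ square-free, and this is guaranteed because $m\mid N$.
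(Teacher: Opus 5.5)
Your proof is correct and follows essentially the same route as the paper. For (1) you use the direct fiber-counting computation under $\pi_m|_S$. For (2) you apply Corollary~\ref{cor:normal basis} with $H=S_m$, use the identification $\eta_{m,c}=\sigma_c(\eta_{m,1})$ to get pairwise distinctness and the degree $[U_m:S_m]$, and then count via $|\Omega_m|=\varphi(m)$. Your extra observation that distinctness also follows directly from the linear independence of the primitive $m$-th roots (sums over disjoint cosets of a basis) makes explicit a step the paper states tersely.
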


\begin{proof}
	(1) If $k\in \Omega_m$, write $k=(N/m)\ell$ with $\ell\in U_m$. Then $\mu_k
	=
	\sum\limits_{s\in S}\zeta_N^{ks}
	=
	\sum\limits_{s\in S}\zeta_m^{\ell\,\pi_m(s)}$. Since $\pi_m|_S:S\to S_m$ is surjective and each element of $S_m$ has exactly
	$|S|/|S_m|$ preimages in $S$, we get
	\[
	\mu_k
	=
	\frac{|S|}{|S_m|}\sum_{u\in \ell S_m}\zeta_m^u
	=
	\Lambda_{m,\ell}.
	\]
	(2) For a fixed $m$, Corollary~\ref{cor:normal basis} implies
	\[
	\mathbb Q(\zeta_m)^{S_m}=\mathbb Q\!\left(\sum_{u\in S_m}\zeta_m^u\right).
	\]
	The Galois conjugates of $\eta_{m,1}$ are precisely the sums $\eta_{m,c}$ as $cS_m$ runs through $U_m/S_m$. Hence the $\eta_{m,c}$, and therefore the $\Lambda_{m,c}$, are pairwise distinct. Their number is $[U_m:S_m]$, and each has degree $[U_m:S_m]$. Since $|\Omega_m|=\varphi(m)$, each of these eigenvalues occurs exactly $|S_m|$ times in the $m$-th layer.
\end{proof}

\begin{definition}\label{def:period poly}
   For each divisor $m\mid N$, we define the $m$-th layer eigenvalue set $E_m$ and the polynomial $Q_m(X)$ by 
\[
E_m:=\{\Lambda_{m,c}:cS_m\in U_m/S_m\},\qquad Q_m(X):=\prod_{\alpha\in E_m}(X-\alpha).
\]
\end{definition}
If we write $a_m=|S|/|S_m|$ and $d_m=[U_m:S_m]$, then 
\[
Q_m(X)=a_m^{d_m}P_m\!\left(\frac{X}{a_m}\right),\qquad P_m(X):=\prod_{cS_m\in U_m/S_m}(X-\eta_{m,c}).
\]
Thus $Q_m$ is the rationally scaled version of the usual Gaussian-period polynomial $P_m$.

Moreover, $Q_m(X)$ is the minimal polynomial over $\mathbb Q$ of every element of $E_m$. Indeed, the Galois group $\operatorname{Gal}(\mathbb Q(\zeta_m)/\mathbb Q)$ is $U_m$, and $a\in U_m$ sends $\Lambda_{m,c}$ to $\Lambda_{m,ac}$. Thus the roots of $Q_m$ form one Galois orbit. By Theorem~\ref{thm:squarefree-subgroup-spectrum}, this orbit has size $[U_m:S_m]$, which is also the degree of $Q_m$. Hence $Q_m\in\mathbb Q[X]$ is irreducible. 

By Theorem~\ref{thm:squarefree-subgroup-spectrum}, we have
\[
\operatorname{mult}(\lambda)=
\sum_{m\mid N,\,\lambda\in E_m}|S_m|.
\]
Therefore, the total multiplicity problem reduces to determining when the layer sets $E_m$ and $E_n$ intersect for $m\ne n$. The next theorem gives a criterion for this.

\begin{theorem}\label{thm:layer-criterion}
	Let $N$ be square-free and let $m,n>1$ be divisors of $N$. Put $g=\gcd(m,n),r=\frac{m}{g}$ and $ t=\frac{n}{g}$. Let $\rho_{m,g}:U_m\to U_g$ and $\rho_{n,g}:U_n\to U_g$ be the natural reduction maps. Then the following conditions are equivalent:
	\begin{enumerate}[label=(\arabic*)]
		\item $E_m\cap E_n\ne\varnothing$; that is, the $m$-th and $n$-th layers share a common eigenvalue.
		\item $E_m=E_n$; that is, the two layers have the same set of eigenvalues.
		
		\item $\ker(\rho_{m,g})\subseteq S_m$, $\ker(\rho_{n,g})\subseteq S_n$ and $\frac{\mu(r)}{\varphi(r)}=\frac{\mu(t)}{\varphi(t)}$.
	\end{enumerate}
\end{theorem}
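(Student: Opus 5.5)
The equivalence (1)$\Leftrightarrow$(2) will come from irreducibility. Each $E_m$ is the full root set of the irreducible polynomial $Q_m\in\Q[X]$, as shown just before the statement. If $\lambda\in E_m\cap E_n$, then $Q_m$ and $Q_n$ are both monic irreducible and both vanish at $\lambda$, so $Q_m=Q_n$ and hence $E_m=E_n$. The converse (2)$\Rightarrow$(1) is trivial since $E_m\neq\varnothing$. The substance is therefore (1)$\Leftrightarrow$(3), and I would organize everything around the fixed fields $\Q(\zeta_m)^{S_m}=\Q(\Lambda_{m,1})$, which equal $\Q(\Lambda_{m,1})$ by Corollary~\ref{cor:normal basis}.

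\emph{(1)$\Rightarrow$ the two kernel conditions.} Suppose $\lambda\in E_m\cap E_n$. Then $\lambda\in\Q(\zeta_m)\cap\Q(\zeta_n)=\Q(\zeta_g)$. Since $\lambda$ is a Galois conjugate of $\Lambda_{m,1}$, the whole field $\Q(\Lambda_{m,1})=\Q(\zeta_m)^{S_m}$ is contained in $\Q(\zeta_g)$ (conjugate fields are equal in an abelian extension). Under the Galois correspondence for $\Q(\zeta_m)/\Q$, the field $\Q(\zeta_g)$ corresponds to $\ker(\rho_{m,g})$. Hence the inclusion of fields is equivalent to $\ker(\rho_{m,g})\subseteq S_m$. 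The same argument applied to $n$ gives $\ker(\rho_{n,g})\subseteq S_n$.

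\emph{Computing the layers under the kernel conditions.} Since $m$ is square-free, CRT gives $U_m\cong U_g\times U_r$, and $\ker(\rho_{m,g})$ is the factor $U_r$. Write $\zeta_m=\zeta_g^{\alpha}\zeta_r^{\beta}$ with $\alpha\in U_g$, $\beta\in U_r$. If $U_r\subseteq S_m$, then every coset $cS_m$ is a full product of its image in $U_g$ with $U_r$. The image of $S_m$ in $U_g$ is $S_g$, and $|S_m|=|S_g|\varphi(r)$. Summing over the $U_r$-factor and using $\sum_{x\in U_r}\zeta_r^{x}=\mu(r)$ gives
\[
\Lambda_{m,c}=\frac{|S|}{|S_g|\varphi(r)}\,\mu(r)\,\eta_{g,c'}
\]
for the appropriate twisted reduction $c'\in U_g$. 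As $c$ varies, $c'$ runs over all of $U_g/S_g$. Hence
\[
E_m=\frac{\mu(r)}{\varphi(r)}\,\mathcal T,\qquad \mathcal T:=\Bigl\{\frac{|S|}{|S_g|}\eta_{g,c}\Bigr\},
\]
and likewise $E_n=\frac{\mu(t)}{\varphi(t)}\mathcal T$ with the same set $\mathcal T$. This already gives (3)$\Rightarrow$(2).

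\emph{Finishing (1)$\Rightarrow$ the ratio condition.} We now know $E_m=\alpha\mathcal T$ and $E_n=\alpha'\mathcal T$ with $\alpha=\mu(r)/\varphi(r)$ and $\alpha'=\mu(t)/\varphi(t)$. Both are nonzero rationals because $r,t$ are square-free. From (1)$\Leftrightarrow$(2) the two sets are equal, so $\alpha\mathcal T=\alpha'\mathcal T$. Taking the product of all elements, i.e.\ comparing norms, gives $(\alpha/\alpha')^{d}=1$ with $d=|\mathcal T|$. The norm of $\eta_{g,1}$ is nonzero, since $\eta_{g,1}$ is a nonzero combination of a normal basis by Proposition~\ref{prop:normal basis cyclo}. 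Hence $\alpha/\alpha'=\pm1$.

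The remaining obstacle is to exclude $\alpha/\alpha'=-1$. That case would force $\mathcal T=-\mathcal T$, i.e.\ $\eta_{g,c}=-\eta_{g,c''}$ for some $c,c''$. Expanding both sides in the normal basis of primitive $g$-th roots of unity, the coefficients are all nonnegative and not all zero, so the two expansions cannot be negatives of each other. The case $g=1$ is immediate, since there $\mathcal T=\{|S|\}$. This yields $\mu(r)/\varphi(r)=\mu(t)/\varphi(t)$ and completes (1)$\Rightarrow$(3). The most delicate bookkeeping will be the CRT twist in the second step: checking that the reduction $c\mapsto c'$ really maps the cosets of $S_m$ onto all of $U_g/S_g$, and that the same set $\mathcal T$ appears for both $m$ and $n$.
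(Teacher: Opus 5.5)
Your proposal is correct and follows essentially the same route as the paper. It uses irreducibility of $Q_m,Q_n$ for (1)$\Leftrightarrow$(2), the Galois correspondence inside $\Q(\zeta_m)\cap\Q(\zeta_n)=\Q(\zeta_g)$ for the kernel conditions, and the CRT computation writing $E_m=\frac{\mu(r)}{\varphi(r)}\mathcal T$ and $E_n=\frac{\mu(t)}{\varphi(t)}\mathcal T$ for one common set $\mathcal T$.

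The only divergence is the final step. The paper sums the elements of $E_m=E_n$ instead of multiplying them, which gives $\frac{\mu(r)}{\varphi(r)}\mu(g)=\frac{\mu(t)}{\varphi(t)}\mu(g)$ with $\mu(g)\neq 0$ for square-free $g$. This avoids the $\pm1$ ambiguity that your norm comparison creates, although your positivity argument in the normal basis does rule out the $-1$ case correctly.
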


\begin{proof}
Firstly, the irreducibility of $Q_m$ and $Q_n$ establishes the equivalence among conditions (1) and (2). Indeed, if $E_m$ and $E_n$ share a value $\lambda$, then $\lambda$ is a root of both $Q_m$ and $Q_n$. Since both polynomials are monic and irreducible over $\mathbb Q$, both are the minimal polynomial of $\lambda$ over $\mathbb Q$, and hence $Q_m=Q_n$. Therefore $E_m=E_n$.
	
	It remains to prove the equivalence with (3). Suppose that $\lambda\in E_m\cap E_n\ne\varnothing$.  Since $N$ is square-free, we have $\Q(\zeta_m)\cap \Q(\zeta_n)=\Q(\zeta_g)$. Thus $\lambda\in \Q(\zeta_g)$. By Corollary \ref{cor:normal basis}, every element of $E_m$(resp. $E_n$) generates the fixed field $\Q(\zeta_m)^{S_m}$(resp. $\Q(\zeta_n)^{S_n}$) over $\mathbb Q$. Therefore
	\[
	\Q(\zeta_m)^{S_m}=\Q(\lambda)\subseteq \Q(\zeta_g)=\Q(\zeta_m)^{\ker(\rho_{m,g})},
	\qquad
	\Q(\zeta_n)^{S_n}=\Q(\lambda)\subseteq \Q(\zeta_g)=\Q(\zeta_n)^{\ker(\rho_{n,g})}.
	\]
	By the Galois correspondence, we have
	\[
	\ker(\rho_{m,g})\subseteq S_m,\qquad \ker(\rho_{n,g})\subseteq S_n.
	\]
    Now assume these two kernel inclusions. Since $\ker(\rho_{m,g})\subseteq S_m$ and $S_g=\rho_{m,g}(S_m)$, we have $S_m=\rho_{m,g}^{-1}(S_g)$. Since $m=gr$ with $\gcd(g,r)=1$, the Chinese remainder theorem gives $U_m\cong U_g\times U_r$. Under this identification, $S_m=\rho_{m,g}^{-1}(S_g)=S_g\times U_r$ and $|S_m|=|S_g|\varphi(r)$. Choose integers $b_g\in U_g,\,b_r\in U_r$ such that
		\[
		rb_g\equiv1\pmod g,
		\qquad
		gb_r\equiv1\pmod r.
		\]
		If $x\in\mathbb Z/m\mathbb Z$ corresponds to $(x_g,x_r)\in\mathbb Z/g\mathbb Z\times\mathbb Z/r\mathbb Z$, then $\zeta_m^x=\zeta_g^{b_gx_g}\zeta_r^{b_rx_r}$.  Let \(c\in U_m\) correspond to \((c_g,c_r)\in U_g\times U_r\).  Then $cS_m
        =(c_g,c_r)(S_g\times U_r)
        =c_gS_g\times U_r$. Thus, for $c\in U_m$, we have
		\[
		\eta_{m,c}
		=\sum_{x\in cS_m}\zeta_m^x=
		\sum_{v\in c_gS_g}\sum_{w\in U_r}\zeta_g^{b_gv}\zeta_r^{b_rw}
		=(\sum_{w\in U_r}\zeta_r^{b_rw})(\sum_{v\in c_gS_g}\zeta_g^{b_gv})=
		\mu(r)\eta_{g,b_gc_g},
		\]
		where the last equality follows from \eqref{eq:ram} and the definition of $\eta_{g,b_gc_g}$. Since $b_g\in U_g$, the map $c_gS_g\mapsto b_gc_gS_g$ is a bijection of $U_g/S_g$. Hence, as $cS_m$ runs over all cosets of $S_m$, $b_gc_gS_g$ runs through all cosets of $S_g$. Therefore
		\[
		\{\eta_{m,c}:cS_m\in U_m/S_m\}=\{\mu(r)\eta_{g,a}:aS_g\in U_g/S_g\}.
		\]
		Hence
\begin{equation}\label{eq:E_m}
    E_m=
		\frac{|S|\mu(r)}{|S_g|\varphi(r)}
		\{\eta_{g,a}:aS_g\in U_g/S_g\}.
\end{equation}
Similarly,
\begin{equation}\label{eq:E_n}
    E_n=
	\frac{|S|\mu(t)}{|S_g|\varphi(t)}
	\{\eta_{g,a}:aS_g\in U_g/S_g\}.
\end{equation}
If $E_m=E_n$, then taking the sum of all elements in each set gives $\frac{|S|\mu(r)}{|S_g|\varphi(r)}\mu(g)
	=
	\frac{|S|\mu(t)}{|S_g|\varphi(t)}\mu(g)$. Since $g$ is square-free, $\mu(g)\ne0$. Therefore $\frac{\mu(r)}{\varphi(r)}=\frac{\mu(t)}{\varphi(t)}$. Thus (1) imply (3).
	
	Conversely, if (3) holds, then \eqref{eq:E_m} and \eqref{eq:E_n} imply that $E_m=E_n$. 
\end{proof}

\begin{cor}\label{prop:small-omega}
	If $N$ is square-free and has at most three prime factors, then eigenvalues coming from different layers are distinct. In particular, if $\lambda\in E_m$, then $\operatorname{mult}(\lambda)=\operatorname{mult}_m(\lambda)=|S_m|$.
\end{cor}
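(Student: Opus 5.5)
We apply Theorem~\ref{thm:layer-criterion} and show that its condition (3) can never hold for two distinct divisors $m\neq n$ with $m,n>1$. The layer $\Omega_1=\{0\}$, whose eigenvalue set is $E_1=\{|S|\}$, is handled separately. Once every $E_m$ is known to be disjoint from every other $E_n$, the formula $\operatorname{mult}(\lambda)=\sum_{m\mid N,\,\lambda\in E_m}|S_m|$ collapses to the single term $|S_m|=\operatorname{mult}_m(\lambda)$, which is the second assertion.

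\textbf{Reduction to the ratio condition.} Let $m\neq n$ with $m,n>1$, and put $g=\gcd(m,n)$, $r=m/g$, $t=n/g$. Since $N$ is square-free, $g$, $r$ and $t$ are pairwise coprime divisors of $N$. Moreover $(r,t)\neq(1,1)$ because $m\neq n$. By Theorem~\ref{thm:layer-criterion}, it suffices to show that $\frac{\mu(r)}{\varphi(r)}\neq\frac{\mu(t)}{\varphi(t)}$.

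\textbf{Case analysis on $r$ and $t$.} First suppose one of them equals $1$, say $t=1$; then equality would require $\mu(r)/\varphi(r)=1$ with $r>1$. But $\mu(r)/\varphi(r)=1$ forces $\mu(r)=1$ and $\varphi(r)=1$, so $r\in\{1,2\}$, and $\mu(2)=-1$. Hence no $r>1$ works.

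Now suppose $r,t>1$. Equality of the ratios forces $\mu(r)=\mu(t)$, so $r$ and $t$ have the same parity of prime factors, and it also forces $\varphi(r)=\varphi(t)$. Because $r$ and $t$ are coprime divisors of $N$ and $N$ has at most three prime factors, the sets of primes of $r$ and of $t$ are nonempty and disjoint with total size at most $3$. Equal parity then leaves only the possibility that both are single primes: $r=p$ and $t=q$ with $p\neq q$. (If one had two primes, the other would need two as well, giving four primes in total.) But then $\varphi(p)=p-1\neq q-1=\varphi(q)$, a contradiction. Hence $E_m\cap E_n=\varnothing$.

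\textbf{The layer $m=1$.} The only delicate point is comparing $E_1=\{|S|\}$ with $E_n$ for $n>1$, which Theorem~\ref{thm:layer-criterion} does not cover. If $|S|\in E_n$, then since $|S|$ is rational, the degree $[U_n:S_n]$ from Theorem~\ref{thm:squarefree-subgroup-spectrum} must be $1$, i.e.\ $S_n=U_n$. In that case $\eta_{n,1}=\mu(n)$ by \eqref{eq:ram}, so
\[
\Lambda_{n,1}=\frac{|S|\,\mu(n)}{\varphi(n)}.
\]
This equals $|S|$ only if $\mu(n)/\varphi(n)=1$, which by the same small computation as above happens only for $n=1$. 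So $E_1$ is disjoint from every other layer as well, and the corollary follows.
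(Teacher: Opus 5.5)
Your proof is correct and follows the paper's argument. You invoke Theorem~\ref{thm:layer-criterion} to get $\mu(r)/\varphi(r)=\mu(t)/\varphi(t)$, then use the matching signs together with $\omega(r)+\omega(t)\le 3$ to leave only two cases: one of $r,t$ equals $1$, or $r,t$ are distinct primes, and both are ruled out. Your separate check that $E_1=\{|S|\}$ meets no $E_n$ with $n>1$ is a worthwhile addition, since Theorem~\ref{thm:layer-criterion} only covers $m,n>1$ and the paper's proof passes over this case silently.
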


\begin{proof}
	Suppose $E_m\cap E_n\neq\emptyset$ for some $m\ne n$, then Theorem~\ref{thm:layer-criterion} implies $\frac{\mu(r)}{\varphi(r)}=\frac{\mu(t)}{\varphi(t)}$. The integers $r=m/g$ and $t=n/g$ are coprime square-free divisors of $N$. Let $\omega(a)$ denote the number of distinct prime divisors of $a$, with $\omega(1)=0$. Since $N$ has at most three prime factors, we have $\omega(r)+\omega(t)\le $3.
	
	For any square-free $a$, the sign of $\mu(a)/\varphi(a)$ is $(-1)^{\omega(a)}$. Therefore equality implies $\omega(r)$ and $\omega(t)$ to have the same parity. Since $\omega(r)+\omega(t)\le 3$, we have 
    \[
     (\omega(r),\omega(t))\in \{(0,0),\ (1,1),\ (2,0),\ (0,2)\}.
    \]
    The first case gives $r=t=1$, hence $m=n$, a contradiction. In the second case, $r=p$ and $t=q$ are primes, and $\frac{\mu(r)}{\varphi(r)}=\frac{\mu(t)}{\varphi(t)}$ gives $-\frac1{p-1}=-\frac1{q-1}$, so $p=q$. Since $r$ and $t$ are coprime, this is impossible. In the remaining two cases, one of $r,t$ is $1$ and the other is a product of two distinct primes; then equality would imply $1/\varphi(a)=1$ for such a product $a$, which is impossible. Thus distinct layers cannot share eigenvalues.
\end{proof}

\begin{example}\label{ex:4-prime-rational-collision}
	The layer multiplicity need not equal the (total) multiplicity when $N$ has four prime factors. Let $N=3\cdot5\cdot7\cdot13=1365$,
	and $S=U_N=(\mathbb Z/N\mathbb Z)^\times$. Then $S_m=U_m$ for every divisor $m\mid N$, so each layer contributes exactly one eigenvalue
	\[
	\Lambda_m
	=\frac{\varphi(N)}{\varphi(m)}\sum_{u\in U_m}\zeta_m^u
	=\mu(m)\varphi(N/m),
	\]
	of layer multiplicity $\varphi(m)$. For $m=35$ and $n=39$, we have $g=1$, $r=35$, and $t=39$. The kernel conditions in Theorem~\ref{thm:layer-criterion} hold because $S_{35}=U_{35}$ and $S_{39}=U_{39}$, and $\frac{\mu(35)}{\varphi(35)}=\frac{1}{24}
	=\frac{\mu(39)}{\varphi(39)}$. Thus the eigenvalue $24$ has layer multiplicity $24$ in each of the two layers and total multiplicity $48$.
\end{example}

\vskip 6mm
	\noindent 
	{\bf Acknowledgements.} 
	The author would like to express sincere gratitude to his advisor, Prof. Yigeng Zhao, for his valuable discussions, helpful suggestions, and continuous support throughout this work.

\addcontentsline{toc}{section}{References}
%\bibliography{reference}
\newcommand{\etalchar}[1]{$^{#1}$}

\end{document}